\newtheorem{theorem}{Theorem}[section]
\newtheorem{corollary}[theorem]{Corollary}
\newtheorem{definition}[theorem]{Definition}
\newtheorem{lemma}[theorem]{Lemma}
\newtheorem{proposition}[theorem]{Proposition}
\theoremstyle{remark}
\newtheorem{remark}[theorem]{Remark}
\numberwithin{equation}{section}
\begin{document}
\title[Boundary conditions with memory]{Coleman-Gurtin type equations with
dynamic boundary conditions}
\author[C. G. Gal and J. L. Shomberg]{Ciprian G. Gal$^1$ and Joseph L.
Shomberg$^2$}
\subjclass[2000]{35B25, 35B40, 35B41, 35K57, 37L30, 45K05.}
\keywords{Coleman-Gurtin equation, dynamic boundary conditions with memory,
heat conduction, heat equations.}
\address{$^1$Department of Mathematics, Florida International University,
Miami, FL 33199, USA, \\
\texttt{cgal@fiu.edu}}
\address{$^2$Department of Mathematics and Computer Science, Providence
College, Providence, RI 02918, USA, \\
\texttt{{jshomber@providence.edu} }}
\date{\today }

\begin{abstract}
We present a new formulation and generalization of the classical theory of
heat conduction with or without fading memory which includes the usual heat
equation subject to a dynamic boundary condition as a special case. We
investigate the well-posedness of systems which consist of Coleman-Gurtin
type equations subject to dynamic boundary conditions, also with memory.
Nonlinear terms are defined on the interior of the domain and on the
boundary and subject to either classical dissipation assumptions, or to a
nonlinear balance condition in the sense of \cite{Gal12-2}. Additionally, we
do not assume that the interior and the boundary share the same memory
kernel.
\end{abstract}

\maketitle
\tableofcontents



\section{Introduction}

In recent years there has been an explosive growth in theoretical results
concerning dissipative infinite-dimensional systems with memory including
models arising in the theory of heat conduction in special materials and the
theory of phase-transitions. The mathematical and physical literature,
concerned primarily with qualitative/quantitative properties of solutions to
these models, is quite extensive and much of the work before 2002 is largely
referenced in the survey paper by Grasselli and Pata \cite{Grasselli}. More
recent results and updates can be found in \cite{CPS05, CPS06, CPGP, FGP10}
(cf. also \cite{GPM98, GPM00}). A basic evolution equation considered in
these references is that for an homogeneous and isotropic heat conductor
occupying a $d$-dimensional (bounded) domain $\Omega $ with sufficiently
smooth boundary $\Gamma =\partial \Omega $ and reads%
\begin{equation}
\partial _{t}u-\omega \Delta u-\left( 1-\omega \right) \int_{0}^{\infty
}k_{\Omega }\left( s\right) \Delta u\left( x,t-s\right) ds+f\left( u\right)
=0,  \label{heat-m1}
\end{equation}%
in $\Omega \times \left( 0,\infty \right) .$ Here $u=u\left( t\right) $ is
the (absolute) temperature distribution, $\omega >0,$ $r=-f\left( u\left(
t\right) \right) $ is a temperature dependent heat supply, and $k_{\Omega
}:[0,\infty )\rightarrow \mathbb{R}$ is a continuous nonnegative function,
smooth on $(0,\infty )$ and vanishing at infinity, and summable. As usual, (%
\ref{heat-m1}) is derived by assuming the following energy balance equation%
\begin{equation*}
\partial _{t}e+\text{div}\left( q\right) =r
\end{equation*}%
by considering the following relationships:%
\begin{equation}
e=e_{\infty }+c_{0}u,\text{ }q=-\omega \nabla u-\left( 1-\omega \right)
\int_{0}^{\infty }k_{\Omega }\left( s\right) \nabla u\left( x,t-s\right) ds,
\label{heat-m2}
\end{equation}%
for some constants $e_{\infty },c_{0}>0$. Equation (\ref{heat-m1}) is always
subject to either homogeneous Dirichlet ($u=0$) or Neumann boundary
conditions ($\partial _{n}u=0$) on $\Gamma \times \left( 0,\infty \right) $.
The first one asserts that the temperature is kept constant and close to a
given reference temperature at $\Gamma $ for all time $t>0$, while the
second \textquotedblleft roughly\textquotedblright\ states that the system
is thermally isolated from outside interference. This equation is also
usually supplemented by the \textquotedblleft initial\textquotedblright\
condition $\widetilde{u}:(-\infty ,0]\rightarrow \mathbb{R}$ such that%
\begin{equation}
u_{\mid t\in (-\infty ,0]}=\widetilde{u}\text{ in }\Omega .  \label{ic-m2}
\end{equation}%
These choices of boundary conditions, although help simplify substantially
the mathematical analysis of (\ref{heat-m1})-(\ref{ic-m2}), are actually
debatable in practice since in many such systems it is usually difficult, if
not impossible, to keep the temperature constant at $\Gamma $ for all
positive times without exerting some additional kind of control at $\Gamma $
for $t>0$. A matter of principle also arises for thermally isolated systems
in which, in fact, the correct physical boundary condition for (\ref{heat-m1}%
) turns out to be the following%
\begin{equation}
q\cdot n=\omega \partial _{n}u+\left( 1-\omega \right) \int_{0}^{\infty
}k_{\Omega }\left( s\right) \partial _{n}u\left( x,t-s\right) ds=0\text{ on }%
\Gamma \times \left( 0,\infty \right) \text{,}  \label{heat-m3}
\end{equation}%
see, for instance, \cite[Section 6]{CM1963}. Indeed, the condition $\partial
_{n}u=0$ on $\Gamma \times \left( 0,\infty \right) $ implies (\ref{heat-m3}%
), say when $u$ is a sufficiently smooth solution of (\ref{heat-m1})-(\ref%
{ic-m2}), but clearly the converse cannot hold in general.

In the classical theory of heat conduction, it is common to model a wide
range of diffusive phenomena including heat propagation in homogeneous
isotropic conductors, but generally it is assumed, as above, that surface
(i.e., boundary) conditions are completely static or stationary. In some
important cases this perspective neglects the contribution of boundary
sources to the total heat content of the conductor. A first step to remedy
this situation was done in Goldstein \cite{Gold06} for heat equations. The
approach presented there introduces dynamic boundary conditions into an
\emph{ad hoc} fashion and lacks some rigor in the case of reaction-diffusion
equations. In the next section of the paper we will make use of the usual
physical principles and present a new formulation and generalization of the
classical theory. Our general approach follows that of Coleman and Mizel
\cite{CM1963} which regards the second law of thermodynamics as included
among the laws of physics and which is compatible with the principle of
equipresence in the sense of Truesdell and Toupin (see Section \ref{dme}).
Thus, this new formulation is expected to give a solid foundation to the
arguments employed in derivations of the heat equation with
\textquotedblleft dynamic\textquotedblright\ boundary conditions developed
in Goldstein \cite{Gold06}, or in models for phase transitions developed in
Gal and Grasselli \cite{Gal&Grasselli08, GGM08}. Accounting for the presence
of boundary sources, the new formulation naturally leads to dynamic boundary
conditions for the temperature function $u$ and that contain the above
static conditions (especially, (\ref{heat-m3})) as special cases (see
Section \ref{dme}). In particular, we derive on $\Gamma \times \left(
0,\infty \right) ,$ the following boundary condition for (\ref{heat-m1}):%
\begin{align}
& \partial _{t}u-\nu \Delta _{\Gamma }u+\omega \partial _{n}u+g\left(
u\right)   \notag \\
& +\left( 1-\omega \right) \int_{0}^{\infty }k_{\Omega }\left( s\right)
\partial _{n}u\left( x,t-s\right) ds+\left( 1-\nu \right) \int_{0}^{\infty
}k_{\Gamma }\left( s\right) \left( -\Delta _{\Gamma }+\beta \right) u\left(
x,t-s\right) ds  \label{heat-m4} \\
& =0,  \notag
\end{align}%
for some $\nu \in \left( 0,1\right) $ and $\beta >0$. Here $k_{\Gamma
}:[0,\infty )\rightarrow \mathbb{R}$ is also a smooth nonnegative, summable
function over $(0,\infty )$ such that $k_{\Gamma }$ is vanishing at
infinity. The last two boundary terms on the left-hand side of equation (\ref%
{heat-m4}) are due to contributions coming from a (linear) heat exchange
rate between the bulk $\Omega $ and the boundary $\Gamma $, and boundary
fluxes, respectively (cf. Section \ref{dme}).

Our goal in this paper is to extend the previous well-posedness results of
\cite{CPS05, CPS06, CPGP, FGP10, Grasselli, GPM98, GPM00} and \cite{Gal12-2,
Gal12-3, Gal&Warma10} in the following directions:

\begin{itemize}
\item by allowing general boundary processes take place also on $\Gamma $,
equation (\ref{heat-m1}) is now subject to boundary conditions of the form (%
\ref{heat-m4});

\item we consider more general functions $f,g\in C^{1}\left( \mathbb{R}%
\right) $ satisfying either classical dissipation assumptions, or more
generally, nonlinear balance conditions allowing for bad behavior of $f,g$
at infinity;

\item we develop a general framework allowing for both weak and smooth
initial data for (\ref{heat-m1}), (\ref{heat-m4}), and possibly \emph{%
different} memory functions $k_{\Omega },k_{\Gamma }.$

\item we extend a Galerkin approximation scheme whose explicit construction
is crucial for the existence of strong solutions.
\end{itemize}

The paper is organized as follows. In Section \ref{fs}, we provide the
functional setup. In Section \ref{vf}, we prove theorems concerning the
well-posedness of the system, based on (\ref{heat-m1}), (\ref{heat-m4}),
generated by the new formulation. In the subsequent section, we present a
rigorous formulation and examples in which (\ref{heat-m4}) naturally occurs
for (\ref{heat-m1}).

\section{Derivation of the model equations}

\label{dme}

To begin let us consider a bounded domain $\Omega \subset \mathbb{R}^{d}$
which is occupied by a rigid body. The region $\Omega $ is assumed to be
bounded by a smooth boundary $\Gamma :=\partial \Omega $ which is assumed to
be at least Lipschitz continuous. As usual, a thermodynamic process taking
place in $\Omega $ is defined by five basic functions, that is, the specific
internal energy $e_{\Omega }\left( x,t\right) $, the specific entropy $\eta
_{\Omega }=\eta _{\Omega }\left( x,t\right) $, the heat flux $q=q\left(
x,t\right) $, the absolute temperature $u=u\left( x,t\right) >0$ and the
heat supply $h_{\Omega }\left( x,t\right) \,$, absorbed by the material at $%
x\in \Omega ,$ and possibly furnished by the external world (i.e.,
thermodynamic processes that occur outside of $\Omega $). All these
quantities, defined per unit volume and unit time, are scalars except for $%
q\in \mathbb{R}^{d}$ which is a vector. The classical theory \cite{CG1967,
CM1963} of heat conduction in the body $\Omega $ ignores any heat
contribution which may be supplied from processes taking place on $\Gamma $
and, hence, this situation is never modelled by the theory. This is the case
in many applications, in calorimetry, which go back to problems that occur
as early as the mid 1950's, see \cite[Chapter I, Section 1.9, pg. 22-24]{CJ}%
. A typical example arises when a given body $\Omega $ is in perfect thermal
contact with a thin metal sheet, possibly of different material $\Gamma
=\partial \Omega $ completely insulating the body $\Omega $\ from contact
with, say, a well-stirred hot or cold fluid. The assumption made is that the
metal sheet $\Gamma $\ is sufficiently thin such that the temperature $%
v\left( t\right) $ at any point on $\Gamma $ is constant across its
thickness. Since the sheet $\Gamma $ is in contact with a fluid it will
either heat or cool the body $\Omega $ in which case the heat supplied to $%
\Omega $ is due to both $\Gamma $ and the body of fluid, not to mention the
fact that the temperature distribution in the sheet is also affected by heat
transfer between $\Gamma $ and the interior $\Omega $. Since the outershell $%
\Gamma $ is in perfect contact with the body $\Omega $, it is reasonable to
assume by continuity that the temperature distribution $u\left( t\right) $
in $\Omega ,$ in an infinitesimal layer near $\Gamma $ is equal to $v\left(
t\right) $, for all times $t>\delta $, that is, $u\left( t\right) _{\mid
\Gamma }=v\left( t\right) $ for all $t>\delta $; they need not, of course,
be equal at $t=\delta $, where $\delta $ is the (initial) starting time.
When $\rho _{1},$ $\rho _{2}$ correspond to the densities of $\Omega $ and $%
\Gamma $, respectively, and $c_{1},c_{2}$ denote the heat capacities of $%
\Omega $ and $\Gamma ,$ respectively, this example can be modelled by the
balance equation%
\begin{equation}
\rho _{1}c_{1}\partial _{t}u=-\text{div}\left( q\right) +h_{\Omega }\text{
in }\Omega \times \left( \delta ,\infty \right) ,  \label{toy1}
\end{equation}%
suitably coupled with an equation for $\Gamma $, by considering the heat
balance of an element of area of the sheet $\Gamma $, which is%
\begin{equation}
\rho _{2}c_{2}\partial _{t}u=q\cdot n-\text{div}_{\Gamma }\left( q_{\Gamma
}\right) +l_{\Gamma }\text{ in }\Gamma \times \left( \delta ,\infty \right) .
\label{toy2}
\end{equation}%
Here $n\in \mathbb{R}^{d}$ denotes the exterior unit normal vector to $%
\Gamma $, $l_{\Gamma }\left( x,t\right) $ is an external heat supply and $%
q_{\Gamma }$ is a tangential heat flux on $\Gamma $ while div$_{\Gamma }$ is
the surface divergence whose definition is given below. Note that the
correct term to couple the balance equations for $\Omega $ and $\Gamma $ is
given by $q\cdot n$, since this is used to quantify a (linear) heat exchange
rate across $\Gamma $ from $\Omega $ in all directions normal to the
boundary $\Gamma $. The system (\ref{toy1})-(\ref{toy2}) is also important
in control problems for the heat equation, say when a specific temperature
distribution at the boundary $\Gamma $ is desired (see \cite{HKR1}).

As mentioned earlier, in the classical theory on heat conduction one usually
ignores boundary contributions by either prescribing the temperature on $%
\Gamma $ or assuming that the flux across the surface $\Gamma $ from $\Omega
$ is null, or simply, by invoking Newton's law of cooling which states that
the flux across the surface is directly proportional to temperature
differences between the surface and the surrounding medium. In the sequel,
it is our goal to include general boundary processes into the classical
theory of heat conduction. To this end, in order to define a complete
thermodynamic process in $\overline{\Omega }=\Omega \cup \Gamma $, as in the
previous example, we need to add four more response functions, that is, the
specific surface energy $e_{\Gamma }\left( x,t\right) ,$ the specific
surface entropy density $\eta _{\Gamma }\left( x,t\right) $, the tangential
heat flux $q_{\Gamma }=q_{\Gamma }\left( x,t\right) \in \mathbb{R}^{d-1}$,
and the external heat supply $h_{\Gamma }\left( x,t\right) ,$ all defined
for $x\in \Gamma ,$ per unit area and unit time. It is assumed that the
absolute (local) temperature $u\left( \cdot ,t\right) $\ is sufficiently
smooth up to $\overline{\Omega }$ as a function of the spatial coordinate.
We now introduce the following definition.

\begin{itemize}
\item We say that the set of nine time-dependent variables constitutes a
\emph{complete} \emph{thermodynamic process }in $\overline{\Omega }$ if the
following conservation law holds, not only for $\overline{\Omega }$, but
also for any subdomain $\Omega _{0}\subset \Omega $ and any part $\Gamma
_{0}\subset \Gamma $:%
\begin{equation}
\int_{\Omega }\overset{\centerdot }{e}_{\Omega }dx+\int_{\Gamma }\overset{%
\centerdot }{e}_{\Gamma }d\sigma =-\int_{\Omega }\text{div}\left( q\right)
dx-\int_{\Gamma }\text{div}_{\Gamma }\left( q_{\Gamma }\right) d\sigma
+\int_{\Omega }h_{\Omega }dx+\int_{\Gamma }h_{\Gamma }d\sigma .
\label{1stlaw}
\end{equation}
\end{itemize}

\noindent In (\ref{1stlaw}), $dx$ denotes the volume element, $d\sigma $ is
the element of surface area and the superimposed dot denotes the
time-derivative. Note that in general, the external heat supply $h_{\Gamma }$
on $\Gamma $ must also depend, possibly in a nonlinear fashion, on the heat
content exchanged across $\Gamma $ from $\Omega $, i.e., $h_{\Gamma
}=f\left( q\cdot n\right) +l_{\Gamma }$, for some function $f$, where $%
l_{\Gamma }$ accounts either for the heat supply coming solely from $\Gamma $
or some other source outside of $\Gamma $, see the above example (\ref{toy1}%
)-(\ref{toy2}). In order to give a rigorous definition to div$_{\Gamma
}\left( q_{\Gamma }\right) ,$ we regard $\Gamma $ as a compact Riemanian
manifold without boundary, endowed with the natural metric inherited from $%
\mathbb{R}^{d}$, given in local coordinates by $\mathbf{\tau }$ and with
fundamental form $\left( \mathbf{\tau }_{ij}\right) _{i,j=1,...,d-1}$. A
scalar-valued function $w\in C^{\infty }\left( \Gamma \right) $ induces an
element of the dual space of $T_{x}\Gamma $ via the directional derivative
of tangential vectors at $x\in \Gamma $. Clearly, $T_{x}\Gamma $ is a
Hilbert space when endowed with scalar product induced from $\mathbb{R}^{d}$%
. For a tangential vector field $q_{\Gamma }\in C^{\infty }\left( \Gamma
\right) ,$ that is, $q_{\Gamma }\left( x\right) \in T_{x}\Gamma ,$ for $x\in
\Gamma ,$ the surface divergence, div$_{\Gamma }\left( q_{\Gamma }\right) ,$
is in the local coordinates $\mathbf{\tau }$ for $\Gamma ,$%
\begin{equation*}
\text{div}_{\Gamma }q_{\Gamma }\left( \mathbf{\tau }\right) =\frac{1}{\sqrt{%
\left\vert \mathbf{\tau }\right\vert }}\sum_{i=1}^{d-1}\partial _{i}(\sqrt{%
\left\vert \mathbf{\tau }\right\vert }q_{i}\left( \mathbf{\tau }\right) ),
\end{equation*}%
where $q_{i}$ are the components of $q_{\Gamma }$ with respect to the basis $%
\left\{ \partial _{1}\mathbf{\tau ,...,\partial }_{d-1}\mathbf{\tau }%
\right\} $ of $T_{x}\Gamma $ and $\left\vert \mathbf{\tau }\right\vert =\det
\left( \mathbf{\tau }_{ij}\right) $. Moreover, we can define the surface
gradient $\nabla _{\Gamma }u$ as a unique element of $T_{x}\Gamma $
corresponding to this dual space element via a natural isomorphism, that is,
\begin{equation*}
\nabla _{\Gamma }u\left( \mathbf{\tau }\right) =\sum_{i,j=1}^{d-1}\mathbf{%
\tau }_{ij}\partial _{j}u\left( \mathbf{\tau }\right) \partial _{i}\mathbf{%
\tau ,}
\end{equation*}%
with respect to the canonical basis $\left\{ \partial _{1}\mathbf{\tau
,...,\partial }_{d-1}\mathbf{\tau }\right\} $ of $T_{x}\Gamma $. For a
multi-index $\alpha \in \mathbb{N}_{0}^{m}$, the operator $\nabla _{\Gamma
}^{\alpha }u$ is defined by taking iteratively the components of $\nabla
_{\Gamma }u.$ It is worth emphasizing that our form of the first law (\ref%
{1stlaw}) is equivalent to%
\begin{equation}
\overset{\centerdot }{e}_{\Omega }=-\text{div}\left( q\right) +h_{\Omega }%
\text{ in \ }\Omega \text{, and }\overset{\centerdot }{e}_{\Gamma }=-\text{%
div}_{\Gamma }\left( q_{\Gamma }\right) +h_{\Gamma }\text{ on }\Gamma ,
\label{1stlaw-strong}
\end{equation}%
under suitable smoothness assumptions on the response functions involved in (%
\ref{1stlaw-strong}). Equation (\ref{1stlaw}) may be called the law of
conservation of total energy or the \emph{extended} First Law of
Thermodynamics. For each such complete thermodynamic process, let us define
the total rate of production of entropy in $\overline{\Omega }=\Omega \cup
\Gamma $ to be%
\begin{equation}
\Upsilon :=\int_{\Omega }\overset{\centerdot }{\eta }_{\Omega
}dx+\int_{\Gamma }\overset{\centerdot }{\eta }_{\Gamma }d\sigma
-\int_{\Omega }\frac{h_{\Omega }}{u}dx+\int_{\Omega }\text{div}\left( \frac{q%
}{u}\right) dx+\int_{\Gamma }\text{div}_{\Gamma }\left( \frac{q_{\Gamma }}{u}%
\right) d\sigma -\int_{\Gamma }\frac{h_{\Gamma }}{u}d\sigma ,  \label{2ndlaw}
\end{equation}%
where we regard $q/u$ as a vectorial flux of entropy in $\Omega $, $%
h_{\Omega }/u$ as a scalar supply of entropy produced by radiation from
inside the body $\Omega $, $h_{\Gamma }/u$ is viewed as a scalar supply of
entropy produced by radiation from $\Gamma $ and $q_{\Gamma }/u$ is a
tangential flux of entropy on $\Gamma $. More precisely, we define $\Upsilon
$ to be the difference between the total rate of change in entropy of $%
\overline{\Omega }$ and that rate of change which comes from the heat
supplies in both $\Omega $ and $\Gamma $, and both the inward and tangential
fluxes. We postulate the following extended version of the Second Law of
Thermodynamics as follows.

\begin{itemize}
\item For every complete thermodynamic process in $\overline{\Omega }$ the
inequality%
\begin{equation}
\Upsilon \geq 0  \label{2ndlawbis}
\end{equation}%
must hold for all $t$, not only in $\overline{\Omega }$, but also on all
subdomains $\Omega _{0}\subset \Omega $ and all parts $\Gamma _{0}\subset
\Gamma ,$ respectively\footnote{%
When (\ref{2ndlawbis}) holds on all parts $\Omega _{0}\subset \Omega ,$ it
is understood that all the boundary integrals in (\ref{2ndlaw}) drop out; in
the same fashion, when (\ref{2ndlawbis}) is satisfied for all parts $\Gamma
_{0}\subset \Gamma ,$ the bulk integrals are also omitted from the
definition of $\Upsilon .$}. For obvious reasons, we will refer to the
inequality $\Upsilon \geq 0$\ as the \emph{extended} Clausius-Duhem
inequality. Finally, a complete thermodynamic process is said to be \emph{%
admissible} in $\overline{\Omega }$ if it is compatible with a set of
constitutive conditions given on the response functions introduced above, at
each point of $\overline{\Omega }$\ and at all times $t$.
\end{itemize}

\noindent Of course, for the postulate to hold, the various response
functions must obey some restrictions, including the usual ones which are
consequences of the \emph{classical} Clausius-Duhem inequality. In
particular, the entropy $\eta _{\Omega }$ at each point $x\in \Omega $ must
be determined only by a function of the specific internal energy $e_{\Omega
},$ and the temperature $u$ at $x\in \Omega $ is determined only by a
relation involving $e_{\Omega }$ and $\eta _{\Omega }$. More precisely, it
turns out that for the postulate to hold on any $\Omega _{0}\subset \Omega $%
, both the internal energy $e_{\Omega }$ and the entropy function $\eta
_{\Omega }$ must be constitutively independent of any higher-order stress
tensors $\nabla ^{\gamma }u$ for any $\gamma \geq 1$, such that they are
only functions of the local temperature, i.e., it follows that%
\begin{equation}
e_{\Omega }=e_{\Omega }\left( u\right) \text{ and }\eta _{\Omega }=\eta
_{\Omega }\left( u\right) ,  \label{c-relation1}
\end{equation}%
respectively, cf. \cite[Theorem 1, pg. 251]{CM1963}. Indeed, our postulate
implies that the local form of the second law must hold also on any
subdomain $\Omega _{0}$ of $\Omega $; this implies that%
\begin{equation}
\gamma _{\Omega }:=\left( \overset{\centerdot }{\eta }_{\Omega }-\frac{%
h_{\Omega }}{u}+\text{div}\left( \frac{q}{u}\right) \right) \geq 0\text{ in }%
\Omega  \label{2ndlaw-conseq}
\end{equation}%
and%
\begin{equation}
\gamma _{\Gamma }:=\left( \overset{\centerdot }{\eta }_{\Gamma }-\frac{%
h_{\Gamma }}{u}+\text{div}_{\Gamma }\left( \frac{q_{\Gamma }}{u}\right)
\right) \geq 0\text{ on }\Gamma .  \label{2ndlaw-conseq2}
\end{equation}%
From \cite{CM1963}, we know that $\gamma _{\Omega }\geq 0$ in the body $%
\Omega $ if and only if%
\begin{equation}
q\cdot \nabla u\leq 0,  \label{cond-ineq-body}
\end{equation}%
for all values $u,$ $\nabla u,$...., $\nabla ^{\gamma }u$, with $q=q\left(
u,\nabla u,\nabla ^{2}u,...,\nabla ^{\gamma }u\right) $. This inequality is
called the heat conduction inequality in $\Omega $. In fact, this inequality
was established in \cite{G1965} under more general constitutive assumptions
on $\eta _{\Omega },q$ and $e_{\Omega }$, excluding memory effects, as
functionals of the entropy field over the entire body $\Omega $ at the same
time.

We now find necessary and sufficient set of restrictions on the remaining
functions $\eta _{\Gamma },$ $e_{\Gamma }$, $q_{\Gamma }$. As in \cite%
{CM1963}, we assume a formulation of constitutive equations to be compatible
with the Principle of Equipresence in the sense of Truesdell and Toupin \cite%
[pg. 293]{TT1960}, which basically states that \textquotedblleft \textit{a
variable present as an independent variable in one constitutive equation
should be so present in all}\textquotedblright . In the present formulation,
the material at $x\in \Gamma $ is characterized by the response functions $%
\widehat{\eta }_{\Gamma },$ $\widehat{e}_{\Gamma }$ and $\widehat{q}_{\Gamma
},$ which give the functions $\eta _{\Gamma }\left( x,t\right) ,$ $e_{\Gamma
}\left( x,t\right) $ and $q_{\Gamma }\left( x,t\right) $, respectively, when
the values $\nabla _{\Gamma }^{j}u\left( x,t\right) $ are known for $%
j=0,1,2,...,\alpha .$ Dropping the hats for the sake of convenience and by
force of this principle, we assume that%
\begin{align}
e_{\Gamma }& =e_{\Gamma }\left( u,\nabla _{\Gamma }u,\nabla _{\Gamma
}^{2}u,...,\nabla _{\Gamma }^{\alpha }u\right) ,  \label{c1} \\
\eta _{\Gamma }& =\eta _{\Gamma }\left( u,\nabla _{\Gamma }u,\nabla _{\Gamma
}^{2}u,...,\nabla _{\Gamma }^{\alpha }u\right) ,  \label{c2} \\
q_{\Gamma }& =q_{\Gamma }\left( u,\nabla _{\Gamma }u,\nabla _{\Gamma
}^{2}u,...,\nabla _{\Gamma }^{\alpha }u\right) .  \label{c3}
\end{align}%
Furthermore, we assume that for any fixed values of $\nabla _{\Gamma }^{j}u,$
the response function $e_{\Gamma }$ is smooth in the first variable $u$,
i.e., we suppose $\frac{\partial e_{\Gamma }}{\partial u}\left( u,\nabla
_{\Gamma }u,\nabla _{\Gamma }^{2}u,...,\nabla _{\Gamma }^{\alpha }u\right)
\neq 0.$ This implies that there exist new response functions, say $%
\widetilde{\eta }_{\Gamma },$ $\widetilde{e}_{\Gamma }$ and $\widetilde{q}%
_{\Gamma },$ which can be used to write (\ref{c1})-(\ref{c3}) in the
following form:%
\begin{align}
u& =\widetilde{u}\left( e_{\Gamma },\nabla _{\Gamma }u,\nabla _{\Gamma
}^{2}u,...,\nabla _{\Gamma }^{\alpha }u\right) ,  \label{c1bis} \\
\eta _{\Gamma }& =\widetilde{\eta }_{\Gamma }\left( e_{\Gamma },\nabla
_{\Gamma }u,\nabla _{\Gamma }^{2}u,...,\nabla _{\Gamma }^{\alpha }u\right) ,
\label{c2bis} \\
q_{\Gamma }& =\widetilde{q}_{\Gamma }\left( e_{\Gamma },\nabla _{\Gamma
}u,\nabla _{\Gamma }^{2}u,...,\nabla _{\Gamma }^{\alpha }u\right) .
\label{c3bis}
\end{align}%
For each fixed values of the tensors $\nabla _{\Gamma }^{j}u,$ $%
j=0,1,2,...,\alpha $, the variable $\widetilde{u}\left( \cdot ,\nabla
_{\Gamma }u,\nabla _{\Gamma }^{2}u,...,\nabla _{\Gamma }^{\alpha }u\right) $
is determined through the inverse function of $e_{\Gamma },$ given by (\ref%
{c1}), such that $\widetilde{\eta }_{\Gamma }$ and $\widetilde{q}_{\Gamma }$
are defined by%
\begin{align*}
\widetilde{\eta }_{\Gamma }\left( e_{\Gamma },\nabla _{\Gamma }u,\nabla
_{\Gamma }^{2}u,...,\nabla _{\Gamma }^{\alpha }u\right) & =\eta _{\Gamma
}\left( \widetilde{u}\left( e_{\Gamma },\nabla _{\Gamma }u,\nabla _{\Gamma
}^{2}u,...,\nabla _{\Gamma }^{\alpha }u\right) ,\nabla _{\Gamma }u,\nabla
_{\Gamma }^{2}u,...,\nabla _{\Gamma }^{\alpha }u\right) , \\
\widetilde{q}_{\Gamma }\left( e_{\Gamma },\nabla _{\Gamma }u,\nabla _{\Gamma
}^{2}u,...,\nabla _{\Gamma }^{\alpha }u\right) & =q_{\Gamma }\left(
\widetilde{u}\left( e_{\Gamma },\nabla _{\Gamma }u,\nabla _{\Gamma
}^{2}u,...,\nabla _{\Gamma }^{\alpha }u\right) ,\nabla _{\Gamma }u,\nabla
_{\Gamma }^{2}u,...,\nabla _{\Gamma }^{\alpha }u\right) .
\end{align*}%
Note that with $u\left( x,t\right) $ specified for all $x$ and $t$,
equations (\ref{c1})-(\ref{c3}) give $\eta _{\Gamma }\left( x,t\right) ,$ $%
e_{\Gamma }\left( x,t\right) $ and $q_{\Gamma }\left( x,t\right) ,$ for all $%
x$ and $t,$ in which case the local form of the First Law (see also (\ref%
{1stlaw-strong})) determines also $h_{\Gamma }$. In particular, every
temperature distribution $u\left( x,t\right) >0$ with $x$ varying over $%
\Gamma $, determines a unique complete thermodynamic process in $\Gamma $.
By a standard argument in \cite[pg. 249]{CM1963}, in (\ref{c1})-(\ref{c3})
we may regard not only $e_{\Gamma },\nabla _{\Gamma }u,\nabla _{\Gamma
}^{2}u,...,\nabla _{\Gamma }^{\alpha }u$ as independent variables, but also
their time-derivatives $\overset{\centerdot }{e}_{\Gamma }$, $\nabla
_{\Gamma }\overset{\centerdot }{u},$ $\nabla _{\Gamma }^{2}\overset{%
\centerdot }{u},...,$ $\nabla _{\Gamma }^{\alpha }\overset{\centerdot }{u},$
to form a set of quantities which can be chosen independently at one fixed
point $x\in \Gamma $ and time.

For each complete thermodynamic process in $\overline{\Omega }$, the second
energy balance equation (\ref{1stlaw-strong}) allows us to write (\ref%
{2ndlaw-conseq2}) as%
\begin{equation}
\gamma _{\Gamma }=\overset{\centerdot }{\eta }_{\Gamma }-\frac{h_{\Gamma }}{u%
}+\text{div}_{\Gamma }\left( \frac{q_{\Gamma }}{u}\right) =\overset{%
\centerdot }{\eta }_{\Gamma }-\frac{e_{\Gamma }}{u}+q_{\Gamma }\cdot \nabla
_{\Gamma }\left( \frac{1}{u}\right) .  \label{2ndlaw-conseq3}
\end{equation}%
Since $q_{\Gamma }$ and $\eta _{\Gamma }$ must be given by (\ref{c3bis}) and
(\ref{c2bis}), at any point $\left( x,t\right) ,$ we have%
\begin{equation*}
\overset{\centerdot }{\eta }_{\Gamma }=\frac{\partial \widetilde{\eta }%
_{\Gamma }}{\partial e_{\Gamma }}\overset{\centerdot }{e}_{\Gamma
}+\sum\nolimits_{j=1}^{\alpha }\left( \frac{\partial \widetilde{\eta }%
_{\Gamma }}{\partial u_{,l_{1}l_{2}...l_{j}}}\right) \overset{\centerdot }{u}%
_{,l_{1}l_{2}...l_{j}},
\end{equation*}%
where the summation convention is used and where in local coordinates of $%
\Gamma $, $u_{,l_{1}l_{2}...l_{j}}=(\nabla _{\Gamma
}^{j}u)_{l_{1}l_{2}...l_{j}}$. It follows that%
\begin{equation}
\gamma _{\Gamma }=\left( \frac{\partial \widetilde{\eta }_{\Gamma }}{%
\partial e_{\Gamma }}-\frac{1}{\widetilde{u}}\right) \overset{\centerdot }{e}%
_{\Gamma }+\sum\nolimits_{j=1}^{\alpha }\left( \frac{\partial \widetilde{%
\eta }_{\Gamma }}{\partial u_{,l_{1}l_{2}...l_{j}}}\right) \overset{%
\centerdot }{u}_{,l_{1}l_{2}...l_{j}}-\frac{1}{u^{2}}\widetilde{q}_{\Gamma
}\cdot \nabla _{\Gamma }u.  \label{c4}
\end{equation}%
In order for $\gamma _{\Gamma }\geq 0$ to hold on $\Gamma $ (but also on all
parts $\Gamma _{0}\subset \Gamma $), according to (\ref{2ndlaw-conseq2}) and
our postulate, it is necessary and sufficient that%
\begin{equation}
\frac{\partial \widetilde{\eta }_{\Gamma }}{\partial e_{\Gamma }}=\frac{1}{%
\widetilde{u}},\text{ and }\frac{\partial \widetilde{\eta }_{\Gamma }}{%
\partial u_{,l_{1}l_{2}...l_{j}}}=0,\text{ }j=1,2,...,\alpha .  \label{c5}
\end{equation}%
It follows from (\ref{c5}) that the functions $\widetilde{\eta }_{\Gamma }$
and $\widetilde{u}_{\Gamma }$ from (\ref{c1bis})-(\ref{c2bis}) cannot depend
on $\nabla _{\Gamma }u$, $\nabla _{\Gamma }^{2}u$, $...$, $\nabla _{\Gamma
}^{\alpha }u$, and they must reduce to functions of the scalar variable $%
e_{\Gamma }$ only, i.e., $\eta _{\Gamma }=\widetilde{\eta }_{\Gamma }\left(
e_{\Gamma }\right) ,$ $u=\widetilde{u}_{\Gamma }\left( e_{\Gamma }\right) $.
These function must also obey the first equation of (\ref{c5}); hence, the
variables $\nabla _{\Gamma }u$, $\nabla _{\Gamma }^{2}u$, $...$, $\nabla
_{\Gamma }^{\alpha }u$ must also be dropped out of equations (\ref{c1bis})
and (\ref{c2bis}) to get%
\begin{equation}
e_{\Gamma }=e_{\Gamma }\left( u\right) \text{ and }\eta _{\Gamma }=\eta
_{\Gamma }\left( u\right) .  \label{c-relation2}
\end{equation}%
Consequently, with this reduction we observe that (\ref{c4}) becomes%
\begin{equation*}
\gamma _{\Gamma }=-\frac{1}{u^{2}}\widetilde{q}_{\Gamma }\cdot \nabla
_{\Gamma }u,
\end{equation*}%
for all temperature fields $u>0$ and $q_{\Gamma }$ given by (\ref{c3bis}).
Thus, in order to have $\gamma _{\Gamma }\geq 0$ on $\Gamma $, it is
necessary and sufficient that $\widetilde{q}_{\Gamma }\cdot \nabla _{\Gamma
}u\leq 0,$ or equivalently,%
\begin{equation}
q_{\Gamma }\left( u,\nabla _{\Gamma }u,\nabla _{\Gamma }^{2}u,...,\nabla
_{\Gamma }^{\alpha }u\right) \cdot \nabla _{\Gamma }u\leq 0,  \label{hcb}
\end{equation}%
for all values $u$, $\nabla _{\Gamma }u$, $\nabla _{\Gamma }^{2}u$, $...,$ $%
\nabla _{\Gamma }^{\alpha }u.$ We call (\ref{hcb}) the heat conduction
inequality on $\Gamma $. Therefore, we have established that a necessary and
sufficient condition for the \emph{extended} Clausius-Duhem inequality to
hold for all complete thermodynamic processes on $\overline{\Omega }$ is
that both the conduction inequalities (\ref{cond-ineq-body})-(\ref{hcb}) in $%
\Omega $ and $\Gamma $, respectively, hold. An interesting consequence is
that the following choices $q=-\kappa _{\Omega }\left( u\right) \nabla u$
and $q_{\Gamma }=-\kappa _{\Gamma }\left( u\right) \nabla _{\Gamma }u,$
where $\kappa _{\Omega },\kappa _{\Gamma }>0$ are the thermal conductivity
of $\Omega $ and $\Gamma $, respectively, are covered by this theory. Such
choices were assumed by the theories developed in \cite{Gal&Grasselli08},
\cite{GGM08}, \cite{Gold06} for the system (\ref{toy1})-(\ref{toy2}).

Motivated by the above result, we now wish to investigate more general
constitutive conditions for the response functions involved in (\ref{2ndlaw}%
), by allowing them to depend also explicitly on histories up to time $t$ of
the temperature and/or the temperature gradients at $x$. Following the
approach of \cite{CG1967}, using the abbreviations $g_{\Omega }:=\nabla u$, $%
g_{\Gamma }:=\nabla _{\Gamma }u$, we consider a fixed point $x\in \overline{%
\Omega }$, and define the functions $u^{t}$, $g_{\Omega }^{t},$ $g_{\Gamma
}^{t}$ as the histories up to time $t$ of the temperature and the
temperature gradients at $x$. More precisely, we let%
\begin{equation*}
\left\{
\begin{array}{ll}
u^{t}\left( x,s\right) =u\left( x,t-s\right) , &  \\
\text{ }g_{\Omega }^{t}\left( x,s\right) =g_{\Omega }\left( x,t-s\right) &
\\
\text{ }g_{\Gamma }^{t}\left( x,s\right) =g_{\Gamma }\left( x,t-s\right) , &
\end{array}%
\right.
\end{equation*}%
for all $\,s\in \lbrack 0,\infty )$, on which these functions are
well-defined. For a complete thermodynamic process in $\overline{\Omega }$,
we define the following energy densities on $\Omega $ and $\Gamma $,
respectively, by%
\begin{equation}
\psi _{\Omega }:=e_{\Omega }-u\eta _{\Omega },\text{ }\psi _{\Gamma
}:=e_{\Gamma }-u\eta _{\Gamma }.  \label{energy2}
\end{equation}%
Of course, knowledge of $e_{\Omega },e_{\Gamma }$ and $\eta _{\Omega },\eta
_{\Gamma }$ obviously determine $\psi _{\Omega }$ and $\psi _{\Gamma }$ by
these relations. We now consider a new generalization of the constitutive
equations for (\ref{c-relation1}), (\ref{c-relation2}) and the bulk and
surface fluxes $q,$ $q_{\Gamma },$ respectively. We shall investigate the
implications that the second law (\ref{2ndlawbis}) has on these functions.
We assume that the material at $x\in \overline{\Omega }$ is characterized by
three constitutive functionals $P_{\Omega }$, $H_{\Omega }$ and $q$, in the
bulk $\Omega $, and three more constitutive functionals $P_{\Gamma },$ $%
H_{\Gamma }$ and $q_{\Gamma }$, on the surface $\Gamma $, which give the
present values of $\psi _{\Omega },\psi _{\Gamma },\eta _{\Omega },\eta
_{\Gamma },q$ and $q_{\Gamma }$ at any $x$, whenever the histories are
specified at $x$. Note that the restrictions of the functions $u^{t}$, $%
g_{\Omega }^{t},$ $g_{\Gamma }^{t}$ on the open interval $\left( 0,\infty
\right) $, denoted here by $u_{r}^{t}$, $g_{\Omega ,r}^{t},$ $g_{\Gamma
,r}^{t}$, are called past histories. Since a knowledge of the histories $%
\left( u^{t},g_{\Omega }^{t},g_{\Gamma }^{t}\right) $ is equivalent to a
knowledge of the past histories $\left( u_{r}^{t},g_{\Omega
,r}^{t},g_{\Gamma ,r}^{t}\right) ,$ and the present values $u^{t}\left(
0\right) =u,$ $g_{\Omega }^{t}\left( 0\right) =g_{\Omega }\left( t\right) ,$
$g_{\Gamma }^{t}\left( 0\right) =g_{\Gamma }\left( t\right) ,$ it suffices
to consider%
\begin{equation}
\left\{
\begin{array}{ll}
\psi _{\Omega }=P_{\Omega }\left( u^{t},g_{\Omega }^{t}\right) , & \psi
_{\Gamma }=P_{\Gamma }\left( u^{t},g_{\Gamma }^{t}\right) , \\
\eta _{\Omega }=H_{\Omega }\left( u^{t},g_{\Omega }^{t}\right) , & \eta
_{\Gamma }=H_{\Gamma }\left( u^{t},g_{\Gamma }^{t}\right) , \\
q=q\left( u^{t},g_{\Omega }^{t}\right) , & q_{\Gamma }=q_{\Gamma }\left(
u^{t},g_{\Gamma }^{t}\right) ,%
\end{array}%
\right.  \label{c-relation3}
\end{equation}%
where the Principle of Equipresence is assumed in (\ref{c-relation3}). We
further suppose that all the functionals in (\ref{c-relation3}) obey the
principle of fading memory as formulated in \cite{Co1964} (cf. also \cite[%
Section 5]{G1965}). In particular, this assumption means that
\textquotedblleft \textit{deformations and temperatures experienced in the
distant past should have less effect on the present values of the entropies,
energies, stresses, and heat fluxes than deformations and temperatures which
occurred in the recent past}\textquotedblright . Such assumptions can be
made precise through the so-called \textquotedblleft
memory\textquotedblright\ functions $m_{\Omega },$ $m_{\Gamma },$ which
characterize the rate a\noindent t which the memory fades both in the body $%
\Omega $ and on the surface $\Gamma $, respectively. In particular, we may
assume that both functions $m_{S}\left( \cdot \right) ,$ $S\in \left\{
\Omega ,\Gamma \right\} $, are positive, continuous functions on $\left(
0,\infty \right) $ decaying sufficiently fast to zero as $s\rightarrow
\infty $. In this case, we let $D_{S}$ denote the common domain for the
functionals $P_{S},H_{S}$ and $q_{S}$ ($q_{\Omega }=q$)$,$ as the set of all
pairs $\left( u^{t},g_{S}^{t}\right) $ for which $u^{t}>0$ and $\left\Vert
\left( u^{t},g_{S}^{t}\right) \right\Vert <\infty $, where%
\begin{equation}
\left\Vert \left( u^{t},g_{S}^{t}\right) \right\Vert ^{2}:=\left\vert
u^{t}\left( 0\right) \right\vert ^{2}+\left\vert g_{S}^{t}\left( 0\right)
\right\vert ^{2}+\int_{0}^{\infty }\left\vert u^{t}\left( s\right)
\right\vert ^{2}m_{S}\left( s\right) ds+\int_{0}^{\infty }\left(
g_{S}^{t}\left( s\right) \cdot g_{S}^{t}\left( s\right) \right) m_{S}\left(
s\right) ds,  \label{norm}
\end{equation}%
and where $S\in \left\{ \Omega ,\Gamma \right\} $. Furthermore, for each $%
S\in \left\{ \Omega ,\Gamma \right\} $ we assume as in \cite{CG1967} that $%
P_{S}$, $H_{S}$, and $q_{S}$ ($q_{\Omega }=q$) are continuous over $D_{S}$
with respect to the norm (\ref{norm}), but also that $P_{S}$ is continuously
differentiable over $D_{S}$ in the sense of Fr\'{e}chet, and that the
corresponding functional derivatives are jointly continuous in their
arguments.

In order to observe the set of restrictions that the postulate (\ref%
{2ndlawbis}) puts on the response functions, we recall (\ref{1stlaw-strong})
and substitute (\ref{energy2}) into the local forms (\ref{2ndlaw-conseq}), (%
\ref{2ndlaw-conseq2}) to derive the following (local) forms of the extended
Clasius-Duhem inequality on $\overline{\Omega }$:%
\begin{equation}
\left\{
\begin{array}{ll}
\overset{\centerdot }{\psi }_{\Omega }+\overset{\centerdot }{u}\eta _{\Omega
}+\frac{1}{u}q_{\Omega }\cdot \nabla u\leq 0 & \text{in }\Omega , \\
\overset{\centerdot }{\psi }_{\Gamma }+\overset{\centerdot }{u}\eta _{\Gamma
}+\frac{1}{u}q_{\Gamma }\cdot \nabla _{\Gamma }u\leq 0 & \text{on }\Gamma .%
\end{array}%
\right.  \label{2ndlaw-conseq4}
\end{equation}%
We recall that a complete thermodynamic process is \emph{admissible} in $%
\overline{\Omega }$ if it is compatible with the set of constitutive
conditions given in (\ref{c-relation3}) at each point $x$\ and at all times $%
t$. Since we believe that our postulate (\ref{2ndlawbis}) \emph{should} hold
for all time-dependent variables compatible with the extended law of balance
of energy in (\ref{1stlaw}), it follows from \cite[Theorem 6]{CG1967} (cf.
also \cite[Section 6, Theorem 1]{Co1964}) that the Clausius-Duhem
inequalities (\ref{2ndlaw-conseq4}) imply for each $S\in \left\{ \Omega
,\Gamma \right\} $ that

\begin{itemize}
\item The instantaneous derivatives of $P_{S}$ and $H_{S}$ with respect to $%
g_{S}$ are zero; more precisely,%
\begin{equation*}
D_{g_{S}}P_{S}=D_{g_{S}}H_{S}=0.
\end{equation*}

\item The functional $H_{S}$ is determined by the functional $P_{S}$ through
the entropy relation:%
\begin{equation*}
H_{S}=-D_{u}P_{S}.
\end{equation*}

\item The modified heat conduction inequalities%
\begin{equation*}
\frac{1}{u^{2}}\left( q_{S}\cdot g_{S}\right) \leq \sigma _{S},\text{ }S\in
\left\{ \Omega ,\Gamma \right\} ,
\end{equation*}%
(with $q_{\Omega }=q$) hold for all smooth processes in $\overline{\Omega }$
and for all $t$.
\end{itemize}

\noindent Above, $\sigma _{S}$ denotes the internal/boundary dissipation%
\begin{equation*}
\sigma _{S}\left( t\right) :=-\frac{1}{u\left( t\right) }\left[ \delta
_{u}P_{S}\left( u^{t},g_{S}^{t}\mid \overset{\centerdot }{u}_{r}^{t}\right)
+\delta _{g_{S}}P_{S}\left( u^{t},g_{S}^{t}\mid \overset{\centerdot }{g}%
_{S,r}^{t}\right) \right] ,
\end{equation*}%
at time $t$, corresponding to the histories $\left( u^{t},g_{S}^{t}\right) $%
, where $\overset{\centerdot }{u}$ is the present rate of change of $u$ at
\thinspace $x$, $\overset{\centerdot }{u}_{r}^{t}$ is the past history of
the rate of change of $u$ at $x,$ and so on. Moreover, $D_{g_{S}}P_{S}$, $%
\delta _{u}P_{S}$ and $\delta _{g_{S}}P_{S}$ denote the following linear
differential operators%
\begin{align*}
D_{g_{S}}P_{S}\left( u^{t},g_{S}^{t}\right) \cdot l& =\left( \frac{\partial
}{\partial y}P_{S}\left( u_{r}^{t},g_{S,r}^{t},u,g_{S}+yl\right) \right)
_{y=0}, \\
\delta _{u}P_{S}\left( u^{t},g_{S}^{t}\mid k\right) & =\left( \frac{\partial
}{\partial y}P_{S}\left( u_{r}^{t}+yk,g_{S,r}^{t},u,g_{S}\right) \right)
_{y=0}, \\
\delta _{g_{S}}P_{S}\left( u^{t},g_{S}^{t}\mid \kappa \right) & =\left(
\frac{\partial }{\partial y}P_{S}\left( u_{r}^{t},g_{S,r}^{t}+y\kappa
,u,g_{S}\right) \right) _{y=0},
\end{align*}%
with identities which hold clearly for $\left( u^{t},g_{S}^{t}\right) \in
D_{S},$ $S\in \left\{ \Omega ,\Gamma \right\} $, $l\in \mathbb{R}^{\zeta
_{S}}$ ($\zeta _{\Omega }=d$, $\zeta _{\Gamma }=d-1$), and all $\left(
k,\kappa \right) $ such that%
\begin{equation*}
\int_{0}^{\infty }\left\vert k\left( s\right) \right\vert ^{2}m_{S}\left(
s\right) ds<\infty ,\int_{0}^{\infty }\left\vert \kappa \left( s\right)
\right\vert ^{2}m_{S}\left( s\right) ds<\infty .
\end{equation*}

To derive a simple model which is sufficiently general (see (\ref{eq1m})-(%
\ref{eq2m}) below), we need to consider a set of constitutive equations for $%
e_{S},q_{S}$, $S\in \left\{ \Omega ,\Gamma \right\} $, which comply with the
above implications that the second law has on the response functions
associated with a given complete thermodynamic process in $\overline{\Omega }
$. A fairly general assumption is to consider small variations in the
absolute temperature and temperature gradients on both $\Omega $ and $\Gamma
$, respectively, from equilibrium reference values (cf. (\ref{toy1})-(\ref%
{toy2})). We take%
\begin{equation*}
e_{\Omega }\left( u\right) =e_{\Omega ,\infty }+\rho _{\Omega }c_{\Omega }u,%
\text{ }e_{\Gamma }\left( u\right) =e_{\Gamma ,\infty }+\rho _{\Gamma
}c_{\Gamma }u,
\end{equation*}%
where the involved positive constants $e_{S,\infty },$ $c_{S},$ $\rho _{S}$
denote the internal energies at equilibrium, the specific heat capacities
and material densities of $S\in \left\{ \Omega ,\Gamma \right\} $,
respectively. In addition, we assume that the internal and boundary fluxes
satisfy the following constitutive equations:%
\begin{equation}
\begin{array}{ll}
q\left( t\right) =-\omega \nabla u-\left( 1-\omega \right) \int_{0}^{\infty
}m_{\Omega }\left( s\right) \nabla u^{t}\left( s\right) ds, &  \\
&  \\
q_{\Gamma }\left( t\right) =-\nu \nabla _{\Gamma }u-\left( 1-\nu \right)
\int_{0}^{\infty }m_{\Gamma }\left( s\right) \nabla _{\Gamma }u^{t}\left(
s\right) ds, &
\end{array}
\label{heatfluxes}
\end{equation}%
for some constants $\omega ,\nu \in \left( 0,1\right) $. Of course, when $%
m_{S}=0$, $S\in \left\{ \Omega ,\Gamma \right\} $, we recover in (\ref%
{heatfluxes}) the usual Fourier laws. Thus, in this context the constants $%
\omega ,\nu $ correspond to the instantaneous conductivities of $\Omega $
and $\Gamma $, respectively. Furthermore, we assume in (\ref{1stlaw-strong})
nonlinear temperature dependent heat sources $h_{S},$ $S\in \left\{ \Omega
,\Gamma \right\} $, namely, we take%
\begin{equation}
\begin{array}{ll}
h_{\Omega }\left( t\right) :=-f\left( u\left( t\right) \right) -\alpha
\left( 1-\omega \right) \int_{0}^{\infty }m_{\Omega }\left( s\right) u\left(
x,t-s\right) ds, &  \\
&  \\
h_{\Gamma }\left( t\right) :=-g\left( u\left( t\right) \right) -q\cdot
n-\beta \left( 1-\nu \right) \int_{0}^{\infty }m_{\Gamma }\left( s\right)
u\left( x,t-s\right) ds, &
\end{array}
\label{sources}
\end{equation}%
for some $\beta >0,\alpha >0$, where the source on $\Gamma ,$ $h_{\Gamma }$
is also assumed to depend linearly on heat transport from inside of $\Omega $
in directions normal to the boundary $\Gamma $. With these assumptions, (\ref%
{1stlaw-strong}) yields the following system with memory%
\begin{align}
& \partial _{t}u-\omega \Delta u-\left( 1-\omega \right) \int_{0}^{\infty
}m_{\Omega }\left( s\right) \Delta u\left( x,t-s\right) ds+f\left( u\right)
\label{eq1m} \\
& +\alpha \left( 1-\omega \right) \int_{0}^{\infty }m_{\Omega }\left(
s\right) u\left( x,t-s\right) ds  \notag \\
& =0,  \notag
\end{align}%
in $\Omega \times \left( 0,\infty \right) ,$ subject to the boundary
condition%
\begin{align}
& \partial _{t}u-\nu \Delta _{\Gamma }u+\omega \partial _{n}u+\left(
1-\omega \right) \int_{0}^{\infty }m_{\Omega }\left( s\right) \partial
_{n}u\left( x,t-s\right) ds  \label{eq2m} \\
& +\left( 1-\nu \right) \int_{0}^{\infty }m_{\Gamma }\left( s\right) \left(
-\Delta _{\Gamma }+\beta \right) u\left( x,t-s\right) ds+g\left( u\right)
\notag \\
& =0,  \notag
\end{align}%
on $\Gamma \times \left( 0,\infty \right) .$

It is worth emphasizing that a different choice $e_{\Gamma }\left( u\right)
=e_{\Gamma ,\infty }$ in (\ref{1stlaw-strong}) leads to a formulation in
which the boundary condition (\ref{eq2m}) is not dynamic any longer in the
sense that it does not contain the term $\partial _{t}u$ anymore. This
stationary boundary condition can be also reduced to (\ref{heat-m3}) by a
suitable choice of the parameters $\beta ,$ $\nu $ and the history $%
m_{\Gamma }$ involved in (\ref{heatfluxes}) and (\ref{sources}). On the
other hand, it is clear that if we (formally) choose $m_{S}=\delta _{0}$
(the Dirac mass at zero), for each $S\in \left\{ \Omega ,\Gamma \right\} $,
equations (\ref{eq1m})-(\ref{eq2m}) reduce into the following system%
\begin{equation}
\left\{
\begin{array}{ll}
\partial _{t}u-\Delta u+\overline{f}\left( u\right) =0, & \text{in }\Omega
\times \left( 0,\infty \right) , \\
\partial _{t}u-\Delta _{\Gamma }u+\partial _{n}u+\overline{g}\left( u\right)
=0, & \text{on }\Gamma \times \left( 0,\infty \right) ,%
\end{array}%
\right.   \label{eq-d}
\end{equation}%
where $\overline{g}\left( x\right) :=g\left( x\right) +\left( 1-\nu \right)
\beta x,$ $\overline{f}\left( x\right) :=f\left( x\right) +\left( 1-\omega
\right) \alpha x$, $x\in \mathbb{R}$. The latter has been investigated quite
extensively recently in many contexts (i.e., phase-field systems, heat
conduction phenomena with both a dissipative and non-dissipative source $%
\overline{g}$, Stefan problems, and many more). We refer the reader to
recent investigations pertaining the system (\ref{eq-d}) in \cite{CGGM10,
Gal12-2, Gal12-3, GGM08, Gal&Grasselli08, Gal&Warma10}, and the references
therein.

\section{Past history formulation and functional setup}

\label{fs}

As in \cite{CPS06} (cf. also \cite{Grasselli}), we can introduce the
so-called integrated past history of $u$, i.e., the auxiliary variable%
\begin{equation*}
\eta ^{t}\left( x,s\right) =\int_{0}^{s}u\left( x,t-y\right) dy,
\end{equation*}%
for $s,t>0$. Setting
\begin{equation}
\mu _{\Omega }\left( s\right) =-\omega ^{-1}\left( 1-\omega \right)
m_{\Omega }^{^{\prime }}\left( s\right) ,\text{ }\mu _{\Gamma }\left(
s\right) =-\nu ^{-1}\left( 1-\nu \right) m_{\Gamma }^{^{\prime }}\left(
s\right) ,  \label{eq:mu-1}
\end{equation}%
assuming that $m_{S},$ $S\in \left\{ \Omega ,\Gamma \right\} $, is
sufficiently smooth and vanishing at $\infty $, formal integration by parts
into (\ref{eq1m})-(\ref{eq2m}) yields%
\begin{align*}
\left( 1-\omega \right) \int_{0}^{\infty }m_{\Omega }\left( s\right) \Delta
u\left( x,t-s\right) ds& =\omega \int_{0}^{\infty }\mu _{\Omega }\left(
s\right) \Delta \eta ^{t}\left( x,s\right) ds, \\
\left( 1-\omega \right) \int_{0}^{\infty }m_{\Omega }\left( s\right)
\partial _{n}u\left( x,t-s\right) ds& =\omega \int_{0}^{\infty }\mu _{\Omega
}\left( s\right) \partial _{n}\eta ^{t}\left( x,s\right) ds
\end{align*}%
and%
\begin{equation}
\left( 1-\nu \right) \int_{0}^{\infty }m_{\Gamma }\left( s\right) \left(
-\Delta _{\Gamma }u\left( t-s\right) +\beta u\left( t-s\right) \right)
ds=\nu \int_{0}^{\infty }\mu _{\Gamma }\left( s\right) \left( -\Delta
_{\Gamma }\eta ^{t}\left( s\right) +\beta \eta ^{t}\left( s\right) \right)
ds.  \label{eq:mu-2}
\end{equation}%
Thus, we consider the following formulation.

\noindent \textbf{Problem P}. Find a function $\left( u,\eta ^{t}\right) $
such that%
\begin{equation}
\partial _{t}u-\omega \Delta u-\omega \int_{0}^{\infty }\mu _{\Omega }\left(
s\right) \Delta \eta ^{t}\left( s\right) ds+\alpha \omega \int_{0}^{\infty
}\mu _{\Omega }\left( s\right) \eta ^{t}\left( x,s\right) ds+f\left(
u\right) =0,  \label{eqm}
\end{equation}%
in $\Omega \times \left( 0,\infty \right) ,$%
\begin{align}
& \partial _{t}u-\nu \Delta _{\Gamma }u+\omega \partial _{n}u+\omega
\int_{0}^{\infty }\mu _{\Omega }\left( s\right) \partial _{n}\eta ^{t}\left(
s\right) ds  \label{eqmbis} \\
& +\nu \int_{0}^{\infty }\mu _{\Gamma }\left( s\right) \left( -\Delta
_{\Gamma }\eta ^{t}\left( s\right) +\beta \eta ^{t}\left( s\right) \right)
ds+g\left( u\right)  \notag \\
& =0,  \notag
\end{align}%
on $\Gamma \times \left( 0,\infty \right) ,$ and%
\begin{equation}
\partial _{t}\eta ^{t}\left( s\right) +\partial _{s}\eta ^{t}\left( s\right)
=u\left( t\right) ,\text{ in }\overline{\Omega }\times \left( 0,\infty
\right) ,  \label{eqmtris}
\end{equation}%
subject to the boundary conditions%
\begin{equation}
\eta ^{t}\left( 0\right) =0\text{, in }\overline{\Omega }\times \left(
0,\infty \right)  \label{eqic0}
\end{equation}%
and initial conditions%
\begin{equation}
u\left( 0\right) =u_{0}\text{ in }\Omega ,\text{ }u\left( 0\right) =v_{0}%
\text{ on }\Gamma ,  \label{eqic1}
\end{equation}%
and%
\begin{equation}
\eta ^{0}\left( s\right) =\eta _{0}\text{ in }\Omega \text{, }\eta
^{0}\left( s\right) =\xi _{0}\text{ on }\Gamma .  \label{eqic2}
\end{equation}%
Note that we do not require that the boundary traces of $u_{0}$ and $\eta
_{0}$ equal to $v_{0}$ and $\xi _{0}$, respectively. Thus, we are solving a
much more general problem in which equation (\ref{eqm}) is interpreted as an
evolution equation in the bulk $\Omega $ properly coupled with the equation (%
\ref{eqmbis}) on the boundary $\Gamma .$ Finally, we note that $\eta
_{0},\xi _{0}$ are defined by%
\begin{eqnarray*}
\eta _{0} &=&\int_{0}^{s}u_{0}\left( x,-y\right) dy,\text{ in }\Omega \text{%
, for }s>0, \\
\xi _{0} &=&\int_{0}^{s}v_{0}\left( x,-y\right) dy,\text{ on }\Gamma \text{,
for }s>0.
\end{eqnarray*}%
However, from now on both $\eta _{0}$ and $\xi _{0}$ will be regarded as
independent of the initial data $u_{0},v_{0}.$ Indeed, below we will
consider a more general problem with respect to the original one. In order
to give a more rigorous notion of solutions for problem (\ref{eqm})-(\ref%
{eqic2}), we need to introduce some terminology and the functional setting
associated with this system.

In the sequel, we denote by $\left\Vert \cdot \right\Vert _{L^{2}\left(
\Gamma \right) }$ and $\left\Vert \cdot \right\Vert _{L^{2}\left( \Omega
\right) }$ the norms on $L^{2}\left( \Gamma \right) $ and $L^{2}\left(
\Omega \right) $, whereas the inner products in these spaces are denoted by $%
\left\langle \cdot ,\cdot \right\rangle _{L^{2}\left( \Gamma \right) }$ and $%
\left\langle \cdot ,\cdot \right\rangle _{L^{2}\left( \Omega \right) },$
respectively. Furthermore, the norms on $H^{s}\left( \Omega \right) $ and $%
H^{s}\left( \Gamma \right) ,$ for $s>0,$ will be indicated by $\left\Vert
\cdot \right\Vert _{H^{s}}$ and $\left\Vert \cdot \right\Vert _{H^{s}\left(
\Gamma \right) }$, respectively. The symbol $\left\langle \cdot ,\cdot
\right\rangle $ stands for pairing between any generic Banach spaces $V$ and
its dual $V^{\ast }$; $(u,v)^{\mathrm{tr}}$ will also simply denote the
vector-valued function $\binom{u}{v}.$ Constants below may depend on various
structural parameters such as $|\Omega |$, $|\Gamma |$, $\ell _{1},$ $\ell
_{2}$, etc, and these constants may even change from line to line.
Furthermore, we denote by $K(R)$ a generic monotonically increasing function
of $R>0,$ whose specific dependance on other parameters will be made
explicit on occurrence.

Let us now define the basic functional setup for (\ref{eqm})-(\ref{eqic2}).
From this point on, we assume that $\Omega $ is a bounded domain of $\mathbb{%
R}^{3}$ with boundary $\Gamma $ which is of class $\mathcal{C}^{2}$. To this
end, consider the space $\mathbb{X}^{2}=L^{2}\left( \overline{\Omega },d\mu
\right) ,$ where $d\mu =dx_{\mid \Omega }\oplus d\sigma ,$ such that $dx$
denotes the Lebesgue measure on $\Omega $ and $d\sigma $ denotes the natural
surface measure on $\Gamma $. It is easy to see that $\mathbb{X}%
^{2}=L^{2}\left( \Omega ,dx\right) \oplus L^{2}\left( \Gamma ,d\sigma
\right) $ may be identified under the natural norm%
\begin{equation*}
\left\Vert u\right\Vert _{\mathbb{X}^{2}}^{2}=\int\limits_{\Omega
}\left\vert u\left( x\right) \right\vert ^{2}dx+\int\limits_{\Gamma
}\left\vert u\left( x\right) \right\vert ^{2}d\sigma .
\end{equation*}%
Moreover, if we identify every $u\in C\left( \overline{\Omega }\right) $
with $U=\left( u_{\mid \Omega },u_{\mid \Gamma }\right) \in C\left( \Omega
\right) \times C\left( \Gamma \right) $, we may also define $\mathbb{X}^{2}$
to be the completion of $C\left( \overline{\Omega }\right) $ in the norm $%
\left\Vert \cdot \right\Vert _{\mathbb{X}^{2}}$. In general, any function $%
u\in \mathbb{X}^{2}$ will be of the form $u=\binom{u_{1}}{u_{2}}$ with $%
u_{1}\in L^{2}\left( \Omega ,dx\right) $ and $u_{2}\in L^{2}\left( \Gamma
,d\sigma \right) ,$ and there need not be any connection between $u_{1}$ and
$u_{2}$. From now on, the inner product in the Hilbert space $\mathbb{X}^{2}$
will be denoted by $\left\langle \cdot ,\cdot \right\rangle _{\mathbb{X}%
^{2}}.$ Next, recall that the Dirichlet trace map ${\mathrm{tr_{D}}}%
:C^{\infty }\left( \overline{\Omega }\right) \rightarrow C^{\infty }\left(
\Gamma \right) ,$ defined by ${\mathrm{tr_{D}}}\left( u\right) =u_{\mid
\Gamma }$ extends to a linear continuous operator ${\mathrm{tr_{D}}}%
:H^{r}\left( \Omega \right) \rightarrow H^{r-1/2}\left( \Gamma \right) ,$
for all $r>1/2$, which is onto for $1/2<r<3/2.$ This map also possesses a
bounded right inverse ${\mathrm{tr_{D}}}^{-1}:H^{r-1/2}\left( \Gamma \right)
\rightarrow H^{r}\left( \Omega \right) $ such that ${\mathrm{tr_{D}}}\left( {%
\mathrm{tr_{D}}}^{-1}\psi \right) =\psi ,$ for any $\psi \in H^{r-1/2}\left(
\Gamma \right) $. We can thus introduce the subspaces of $H^{r}\left( \Omega
\right) \times H^{r-1/2}\left( \Gamma \right) $ and $H^{r}\left( \Omega
\right) \times H^{r}\left( \Gamma \right) $, respectively, by%
\begin{align}
\mathbb{V}_{0}^{r}& :=\{U=\left( u,\psi \right) \in H^{r}\left( \Omega
\right) \times H^{r-1/2}\left( \Gamma \right) :{\mathrm{tr_{D}}}\left(
u\right) =\psi \},  \label{vvv} \\
\mathbb{V}^{r}& :=\{U=\left( u,\psi \right) \in \mathbb{V}_{0}^{r}:{\mathrm{%
tr_{D}}}\left( u\right) =\psi \in H^{r}\left( \Gamma \right) \},  \notag
\end{align}%
for every $r>1/2,$ and note that $\mathbb{V}_{0}^{r},$ $\mathbb{V}^{r}$ are
not product spaces. However, we have the following dense and compact
embeddings $\mathbb{V}_{0}^{r_{1}}\subset \mathbb{V}_{0}^{r_{2}},$ for any $%
r_{1}>r_{2}>1/2$ (by definition, this also true for the sequence of spaces $%
\mathbb{V}^{r_{1}}\subset \mathbb{V}^{r_{2}}$). Naturally, the norm on the
spaces $\mathbb{V}_{0}^{r},$ $\mathbb{V}^{r}$ are defined by%
\begin{equation}
\Vert U\Vert _{\mathbb{V}_{0}^{r}}^{2}:=\Vert u\Vert _{H^{r}}^{2}+\Vert \psi
\Vert _{H^{r-1/2}(\Gamma )}^{2},\text{ }\Vert U\Vert _{\mathbb{V}%
^{r}}^{2}:=\Vert u\Vert _{H^{r}}^{2}+\Vert \psi \Vert _{H^{r}(\Gamma )}^{2}.
\label{eq:Vr-norm}
\end{equation}%
In particular, the norm in the spaces $\mathbb{V}^{1},$ $\mathbb{V}_{0}^{1}$
can be defined as in terms of the following equivalent norms:%
\begin{eqnarray*}
\Vert U\Vert _{\mathbb{V}^{1}} &:&=\left( \omega \Vert \nabla u\Vert
_{L^{2}\left( \Omega \right) }^{2}+\nu \Vert \nabla _{\Gamma }\psi \Vert
_{L^{2}(\Gamma )}^{2}+\beta \nu \left\Vert \psi \right\Vert _{L^{2}\left(
\Gamma \right) }^{2}\right) ^{1/2},\text{ }\nu >0, \\
\Vert U\Vert _{\mathbb{V}_{0}^{1}} &:&=\left( \omega \Vert \nabla u\Vert
_{L^{2}\left( \Omega \right) }^{2}+\alpha \omega \left\Vert u\right\Vert
_{L^{2}\left( \Omega \right) }^{2}\right) ^{1/2}.
\end{eqnarray*}%
Now we introduce the spaces for the memory vector-valued function $\left(
\eta ,\xi \right) $. For a given nonnegative, not identically equal to
zero,\ and measurable function $\theta _{S},$ $S\in \left\{ \Omega ,\Gamma
\right\} $, defined on $\mathbb{R}_{+},$ and a real Hilbert space $W$ (with
inner product denoted by $\left\langle \cdot ,\cdot \right\rangle _{\mathrm{W%
}}$), let $L_{\theta _{S}}^{2}\left( \mathbb{R}_{+};W\right) $ be the
Hilbert space of $W$-valued functions on $\mathbb{R}_{+}$, endowed with the
following inner product%
\begin{equation}
\left\langle \phi _{1},\phi _{2}\right\rangle _{L_{\theta _{S}}^{2}\left(
\mathbb{R}_{+};W\right) }=\int_{0}^{\infty }\theta _{S}(s)\left\langle \phi
_{1}\left( s\right) ,\phi _{2}\left( s\right) \right\rangle _{\mathrm{W}}ds.
\label{sc}
\end{equation}%
Moreover, for each $r>1/2$ we define%
\begin{equation*}
L_{\theta _{\Omega }\oplus \theta _{\Gamma }}^{2}\left( \mathbb{R}_{+};%
\mathbb{V}^{r}\right) \simeq L_{\theta _{\Omega }}^{2}\left( \mathbb{R}_{+};%
\mathbb{V}_{0}^{r}\right) \oplus L_{\theta _{\Gamma }}^{2}\left( \mathbb{R}%
_{+};H^{r}\left( \Gamma \right) \right)
\end{equation*}%
as the Hilbert space of $\mathbb{V}^{r}$-valued functions $\left( \eta ,\xi
\right) ^{\mathrm{tr}}$ on $\mathbb{R}_{+}$ endowed with the inner product%
\begin{equation*}
\left\langle \binom{\eta _{1}}{\xi _{1}},\binom{\eta _{2}}{\xi _{2}}%
\right\rangle _{L_{\theta _{\Omega }\oplus \theta _{\Gamma }}^{2}\left(
\mathbb{R}_{+};\mathbb{V}^{r}\right) }=\int_{0}^{\infty }\left( \theta
_{\Omega }(s)\left\langle \eta _{1}\left( s\right) ,\eta _{2}\left( s\right)
\right\rangle _{H^{r}}+\theta _{\Gamma }(s)\left\langle \xi _{1}\left(
s\right) ,\xi _{2}\left( s\right) \right\rangle _{H^{r}\left( \Gamma \right)
}\right) ds.
\end{equation*}%
Consequently, for $r>1/2$ we set%
\begin{equation*}
\mathcal{M}_{\Omega }^{0}:=L_{\mu _{\Omega }}^{2}\left( \mathbb{R}%
_{+};L^{2}\left( \Omega \right) \right) \text{, }\mathcal{M}_{\Omega
}^{r}:=L_{\mu _{\Omega }}^{2}(\mathbb{R}_{+};\mathbb{V}_{0}^{r})\text{, }%
\mathcal{M}_{\Gamma }^{r}:=L_{\mu _{\Gamma }}^{2}(\mathbb{R}_{+};H^{r}\left(
\Gamma \right) )
\end{equation*}%
and%
\begin{equation*}
\mathcal{M}_{\Omega ,\Gamma }^{0}:=L_{\mu _{\Omega }\oplus \mu _{\Gamma
}}^{2}\left( \mathbb{R}_{+};\mathbb{X}^{2}\right) ,\text{ }\mathcal{M}%
_{\Omega ,\Gamma }^{r}:=L_{\mu _{\Omega }\oplus \mu _{\Gamma }}^{2}\left(
\mathbb{R}_{+};\mathbb{V}^{r}\right) .
\end{equation*}%
Clearly, because of the topological identification $H^{r}\left( \Omega
\right) \simeq \mathbb{V}_{0}^{r}$, one has the inclusion $\mathcal{M}%
_{\Omega ,\Gamma }^{r}\subset \mathcal{M}_{\Omega }^{r}$ for each $r>1/2$.
In the sequel, we will also consider Hilbert spaces of the form $W_{\mu
_{\Omega }}^{k,2}\left( \mathbb{R}_{+};\mathbb{V}_{0}^{r}\right) $ for $k\in
\mathbb{N}$. When it is convenient, we will also use the notation%
\begin{equation}
\mathcal{H}_{\Omega ,\Gamma }^{0,1}:=\mathbb{X}^{2}\times \mathcal{M}%
_{\Omega ,\Gamma }^{1}\text{, }\mathcal{H}_{\Omega ,\Gamma }^{s,r}:=\mathbb{V%
}^{s}\times \mathcal{M}_{\Omega ,\Gamma }^{r}\text{ for }s,r\geq 1.  \notag
\end{equation}%
For matter of convenience, we will also set the inner product in $\mathcal{M}%
_{\Omega ,\Gamma }^{1},$ as follows:%
\begin{align*}
& \left\langle \binom{\eta _{1}}{\xi _{1}},\binom{\eta _{2}}{\xi _{2}}%
\right\rangle _{L_{\theta _{\Omega }\oplus \theta _{\Gamma }}^{2}\left(
\mathbb{R}_{+};\mathbb{V}^{1}\right) } \\
& =\omega \int_{0}^{\infty }\theta _{\Omega }(s)\left( \left\langle \nabla
\eta _{1}\left( s\right) ,\nabla \eta _{2}\left( s\right) \right\rangle
_{L^{2}\left( \Omega \right) }+\alpha \left\langle \eta _{1}\left( s\right)
,\eta _{2}\left( s\right) \right\rangle _{L^{2}\left( \Omega \right)
}\right) ds \\
& +\nu \int_{0}^{\infty }\theta _{\Gamma }(s)\left( \left\langle \nabla
_{\Gamma }\xi _{1}\left( s\right) ,\nabla _{\Gamma }\xi _{2}\left( s\right)
\right\rangle _{L^{2}\left( \Gamma \right) }+\beta \left\langle \xi
_{1}\left( s\right) ,\xi _{2}\left( s\right) \right\rangle _{L^{2}\left(
\Gamma \right) }\right) ds.
\end{align*}

The following basic elliptic estimate is taken from \cite[Lemma 2.2]%
{Gal&Grasselli08}.

\begin{lemma}
\label{t:appendix-lemma-3} Consider the linear boundary value problem,
\begin{equation}
\left\{
\begin{array}{rl}
-\Delta u & =p_{1}~~\text{in}~\Omega , \\
-\Delta _{\Gamma }u+\partial _{n}u+\beta u & =p_{2}~~\text{on}~\Gamma .%
\end{array}%
\right.  \label{eq:appendix-BVP}
\end{equation}%
If $(p_{1},p_{2})^{{\mathrm{tr}}}\in H^{s}(\Omega )\times H^{s}(\Gamma )$,
for $s\geq 0$ and $s+\frac{1}{2}\not\in \mathbb{N}$, then the following
estimate holds for some constant $C>0$,
\begin{equation}
\Vert u\Vert _{H^{s+2}}+\Vert u\Vert _{H^{s+2}(\Gamma )}\leq C\left( \Vert
p_{1}\Vert _{H^{s}}+\Vert p_{2}\Vert _{H^{s}(\Gamma )}\right) .
\label{eq:H2-regularity-estimate}
\end{equation}
\end{lemma}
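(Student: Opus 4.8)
The plan is to prove the elliptic regularity estimate \eqref{eq:H2-regularity-estimate} for the coupled boundary value problem \eqref{eq:appendix-BVP} by combining two separate regularity mechanisms: standard interior and boundary elliptic regularity for the Laplacian in $\Omega$ (driven by the bulk data $p_1$), together with regularity for the Laplace--Beltrami operator $-\Delta_\Gamma$ on the compact manifold $\Gamma$ (driven by the surface data $p_2$ and the normal derivative $\partial_n u$). Since the paper assumes $\Gamma$ is of class $\mathcal{C}^2$ and $\Omega \subset \mathbb{R}^3$, all trace maps and surface operators needed below are available. The first thing I would do is establish a base-case estimate at $s=0$ via a weak (variational) formulation, and then bootstrap to general $s$ by an induction argument that alternates between bulk and boundary regularity.

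\emph{Base case and the variational setup.} First I would recast \eqref{eq:appendix-BVP} in the weak form associated with the bilinear form on $\mathbb{V}^1$, namely $a(u,v)=\langle \nabla u,\nabla v\rangle_{L^2(\Omega)}+\langle \nabla_\Gamma u,\nabla_\Gamma v\rangle_{L^2(\Gamma)}+\beta\langle u,v\rangle_{L^2(\Gamma)}$, testing against $v\in \mathbb{V}^1$ so that the $\partial_n u$ term is absorbed through the Green/divergence identity and the coupling condition $\mathrm{tr_D}(u)=\psi$ is respected. Coercivity of $a(\cdot,\cdot)$ on $\mathbb{V}^1$ (which uses $\beta>0$, giving control of $\|u\|_{L^2(\Gamma)}$, and Poincaré-type control in $\Omega$) together with Lax--Milgram yields a unique weak solution and the $\mathbb{V}^1$ bound $\|u\|_{\mathbb{V}^1}\le C(\|p_1\|_{L^2}+\|p_2\|_{L^2(\Gamma)})$. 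This is the $s=0$ seed of the estimate once I upgrade $\mathbb{V}^1$ control to $H^2(\Omega)\times H^2(\Gamma)$ control.

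\emph{The bootstrap.} Given $u\in \mathbb{V}^{s+1}$ (inductive hypothesis or base regularity), I would first treat the interior equation $-\Delta u=p_1$ in $\Omega$: classical elliptic regularity for $-\Delta$ with the Dirichlet datum $\psi=\mathrm{tr_D}(u)$ on $\Gamma$ gives
\begin{equation*}
\|u\|_{H^{s+2}(\Omega)}\le C\bigl(\|p_1\|_{H^s(\Omega)}+\|\psi\|_{H^{s+3/2}(\Gamma)}\bigr).
\end{equation*}
The term $\|\psi\|_{H^{s+3/2}(\Gamma)}$ is the one I must recover from the boundary equation. Rewriting the second line of \eqref{eq:appendix-BVP} as $-\Delta_\Gamma \psi+\beta\psi=p_2-\partial_n u$ on $\Gamma$, I invoke regularity for the invertible operator $-\Delta_\Gamma+\beta$ on $\Gamma$:
\begin{equation*}
\|\psi\|_{H^{s+2}(\Gamma)}\le C\bigl(\|p_2\|_{H^s(\Gamma)}+\|\partial_n u\|_{H^s(\Gamma)}\bigr),
\end{equation*}
and I control the normal trace by the bulk norm via the trace theorem, $\|\partial_n u\|_{H^s(\Gamma)}\le C\|u\|_{H^{s+3/2}(\Omega)}$. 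Substituting back and arranging the two estimates so that the interior $H^{s+2}$-norm and the boundary $H^{s+2}$-norm each appear on the left, I close the estimate by an interpolation/absorption argument: the cross terms $\|u\|_{H^{s+3/2}(\Omega)}$ and $\|\psi\|_{H^{s+3/2}(\Gamma)}$ sit at a lower order than the targets $\|u\|_{H^{s+2}}$, $\|\psi\|_{H^{s+2}(\Gamma)}$, so by interpolation they are dominated by a small multiple of the top-order norms plus a constant multiple of lower-order norms already controlled at the previous induction step. The condition $s+\tfrac12\notin\mathbb{N}$ is exactly what keeps the trace and Sobolev embedding exponents noncritical throughout this loop.

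\emph{Main obstacle.} The delicate point is the \textbf{closure of the coupled loop}: the interior estimate costs a half-derivative of the Dirichlet trace $\psi$, while the boundary estimate feeds back a normal derivative $\partial_n u$ whose control also costs (via the trace theorem) a loss relative to $\|u\|_{H^{s+2}(\Omega)}$. One must verify that each feedback lands strictly below top order so the absorption is legitimate, and that the two inequalities can genuinely be summed and rearranged rather than circularly re-feeding the same top-order quantity. I expect this interpolation-and-absorption bookkeeping — rather than either regularity theorem in isolation — to be the crux; since the statement is cited verbatim from \cite[Lemma 2.2]{Gal&Grasselli08}, I would lean on that reference for the precise constants and simply reproduce the structure of the coupled bootstrap here.
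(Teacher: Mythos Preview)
The paper does not prove this lemma at all; it simply quotes it verbatim from \cite[Lemma 2.2]{Gal&Grasselli08} and moves on. Your sketch is therefore not being compared against any argument in the present paper, but it is a correct outline of the standard proof: the Lax--Milgram step on $\mathbb{V}^1$ is well set up (coercivity indeed follows from $\beta>0$ together with the trace Poincar\'e inequality \eqref{eq:Poincare}, which controls $\|u\|_{L^2(\Omega)}$ in terms of $\|\nabla u\|_{L^2(\Omega)}+\|u\|_{L^2(\Gamma)}$), and the two-step bootstrap you describe---Dirichlet regularity for $-\Delta$ in $\Omega$ followed by regularity for $-\Delta_\Gamma+\beta$ on $\Gamma$, closed via interpolation/absorption of the cross terms at order $s+3/2$---is exactly how the coupled Wentzell regularity is obtained in the literature. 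One small caveat: the statement is phrased for all $s\ge 0$, but the paper only assumes $\Gamma$ of class $\mathcal{C}^2$, so strictly speaking the full range of $s$ requires correspondingly smoother $\Gamma$; this is a hypothesis issue with the lemma as stated, not with your argument, and the paper only ever invokes the case $s=0$ (via \eqref{regularity-oper} and in the proof of \eqref{qest13tris}).
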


We also recall the following basic inequality from \cite[Lemma A.2]{Gal12-2}.

\begin{lemma}
\label{t:appendix-lemma-1} Let $s>1$ and $u\in H^{1}(\Omega )$. Then, for
every $\varepsilon >0$, there exists a positive constant $C_{\varepsilon
}\sim \varepsilon ^{-1}$ such that,
\begin{equation}
\Vert u\Vert _{L^{s}(\Gamma )}^{s}\leq \varepsilon \Vert \nabla u\Vert
_{L^{2}(\Omega )}^{2}+C_{\varepsilon }\left( \Vert u\Vert _{L^{\gamma
}(\Omega )}^{\gamma }+1\right) ,  \label{eq:appendix-lemma-1}
\end{equation}%
where $\gamma =\max \{s,2(s-1)\}$.
\end{lemma}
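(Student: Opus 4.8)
The plan is to reduce the boundary integral to interior integrals via the Gauss--Green (divergence) theorem, and then to split the resulting gradient term using Cauchy--Schwarz followed by Young's inequality. Since $\Gamma =\partial \Omega $ is of class $\mathcal{C}^{2}$, the outward unit normal field extends to a vector field $V\in C^{1}(\overline{\Omega };\mathbb{R}^{3})$ with $V\cdot n\equiv 1$ on $\Gamma $; in particular there is a constant $c_{0}>0$ with $V\cdot n\geq c_{0}$ on $\Gamma $. First I would assume $u$ smooth (the general case following by a routine density argument, the inequality being trivially true whenever its right-hand side is infinite) and apply the divergence theorem to the vector field $\left\vert u\right\vert ^{s}V$. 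Because $s>1$, the map $t\mapsto \left\vert t\right\vert ^{s}$ is $C^{1}$, so $\nabla (\left\vert u\right\vert ^{s})=s\left\vert u\right\vert ^{s-1}\mathrm{sgn}(u)\nabla u$ and
\[
c_{0}\int_{\Gamma }\left\vert u\right\vert ^{s}\,d\sigma \leq \int_{\Gamma }\left\vert u\right\vert ^{s}\,(V\cdot n)\,d\sigma =\int_{\Omega }\big(\nabla (\left\vert u\right\vert ^{s})\cdot V+\left\vert u\right\vert ^{s}\,\mathrm{div}\,V\big)\,dx\leq C\int_{\Omega }\big(\left\vert u\right\vert ^{s-1}\left\vert \nabla u\right\vert +\left\vert u\right\vert ^{s}\big)\,dx.
\]

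Next I would treat the critical term $\int_{\Omega }\left\vert u\right\vert ^{s-1}\left\vert \nabla u\right\vert \,dx$. Applying Cauchy--Schwarz pairs $\left\vert \nabla u\right\vert $ against $\left\vert u\right\vert ^{s-1}$ and produces exactly the square gradient norm: $\int_{\Omega }\left\vert u\right\vert ^{s-1}\left\vert \nabla u\right\vert \,dx\leq \Vert \nabla u\Vert _{L^{2}(\Omega )}\Vert u\Vert _{L^{2(s-1)}(\Omega )}^{s-1}$. Young's inequality with conjugate exponents $(2,2)$ then gives, for any $\delta >0$, the bound $\tfrac{\delta }{2}\Vert \nabla u\Vert _{L^{2}(\Omega )}^{2}+\tfrac{1}{2\delta }\Vert u\Vert _{L^{2(s-1)}(\Omega )}^{2(s-1)}$; choosing $\delta $ proportional to $\varepsilon $ (to absorb the factor $C/c_{0}$) yields the stated term $\varepsilon \Vert \nabla u\Vert _{L^{2}(\Omega )}^{2}$ together with a constant $C_{\varepsilon }\sim \varepsilon ^{-1}$, which is the claimed dependence on $\varepsilon $.

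Finally, I would reconcile the two interior integrals $\Vert u\Vert _{L^{s}(\Omega )}^{s}$ and $\Vert u\Vert _{L^{2(s-1)}(\Omega )}^{2(s-1)}$ with the single exponent $\gamma =\max \{s,2(s-1)\}$. On the bounded domain $\Omega $, Hölder's inequality followed by Young's inequality bounds $\Vert u\Vert _{L^{p}(\Omega )}^{p}\leq C(\Vert u\Vert _{L^{\gamma }(\Omega )}^{\gamma }+1)$ for any $p\leq \gamma $; since $\gamma =2(s-1)\geq s$ when $s\geq 2$ and $\gamma =s\geq 2(s-1)$ when $1<s\leq 2$, both interior terms are absorbed into $C_{\varepsilon }(\Vert u\Vert _{L^{\gamma }(\Omega )}^{\gamma }+1)$. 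The only genuinely delicate point is the production of the \emph{first-power} gradient norm $\Vert \nabla u\Vert _{L^{2}(\Omega )}^{2}$: a naive substitution $w=\left\vert u\right\vert ^{s/2}$ into the standard $L^{2}$ trace interpolation would instead generate the weighted quantity $\int_{\Omega }\left\vert u\right\vert ^{s-2}\left\vert \nabla u\right\vert ^{2}\,dx$, which cannot be reduced to $\Vert \nabla u\Vert _{L^{2}(\Omega )}^{2}$; the divergence-theorem route, by retaining a single power of $\left\vert \nabla u\right\vert $ and pairing it with $\left\vert u\right\vert ^{s-1}$ in Cauchy--Schwarz, is precisely what circumvents this obstruction. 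The remaining bookkeeping (the density step, the $\mathcal{C}^{2}$ regularity used only to construct $V$, and the trivial case of an infinite right-hand side) is routine.
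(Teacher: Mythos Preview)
The paper does not prove this lemma; it merely quotes it as \cite[Lemma~A.2]{Gal12-2}. So there is no in-paper argument to compare against, and your proposal should be judged on its own merits.

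Your argument is correct and is in fact the standard way such trace estimates are obtained. The divergence-theorem (multiplier) step produces a boundary integral controlled by $\int_\Omega |u|^{s-1}|\nabla u|\,dx$ plus $\int_\Omega |u|^s\,dx$; Cauchy--Schwarz followed by Young's inequality with parameter $\delta\sim\varepsilon$ yields exactly $\varepsilon\Vert\nabla u\Vert_{L^2(\Omega)}^2$ and a constant $C_\varepsilon\sim\varepsilon^{-1}$ in front of $\Vert u\Vert_{L^{2(s-1)}(\Omega)}^{2(s-1)}$. Your remark about why the substitution $w=|u|^{s/2}$ into the $L^2$ trace inequality fails (it would generate the weighted term $\int |u|^{s-2}|\nabla u|^2$ instead) is precisely the reason one goes through the first-order divergence identity. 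The final absorption of both $\Vert u\Vert_{L^s(\Omega)}^s$ and $\Vert u\Vert_{L^{2(s-1)}(\Omega)}^{2(s-1)}$ into $\Vert u\Vert_{L^\gamma(\Omega)}^\gamma+1$ via H\"older and Young on the bounded domain is routine, as you say.

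Two small points worth tightening. First, in the density step you should approximate $u$ simultaneously in $H^1(\Omega)$ and $L^\gamma(\Omega)$ (mollification does this), since $\gamma$ may exceed the Sobolev exponent and $H^1$-convergence alone does not control $\Vert u_n\Vert_{L^\gamma}^\gamma$; Fatou on the boundary side then closes the limit. Second, the phrasing ``$V\cdot n\equiv 1$; in particular $V\cdot n\geq c_0$'' is redundant---either construct $V$ with $V\cdot n=1$ on $\Gamma$ (possible for $\mathcal{C}^2$ boundaries) and take $c_0=1$, or just assert $V\cdot n\geq c_0>0$. Neither affects the validity of the proof.
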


Next, we consider the linear (self-adjoint, positive)\ operator $\mathrm{C}%
\psi :=\mathrm{C}_{\beta }\psi =-\Delta _{\Gamma }\psi +\beta \psi $ acting
on $D\left( \mathrm{C}\right) =H^{2}\left( \Gamma \right) $. The basic
(linear) operator, associated with problem (\ref{eqm})-(\ref{eqmtris}), is
the so-called \textquotedblleft Wentzell\textquotedblright\ Laplace
operator. Recall that $\omega \in \left( 0,1\right) $. We let%
\begin{align}
\mathrm{A_{W}^{\alpha ,\beta ,\nu ,\omega }}\binom{u_{1}}{u_{2}}& :=\left(
\begin{array}{cc}
-\omega \Delta +\alpha \omega I & 0 \\
\omega \partial _{n}\left( \cdot \right) & \nu \mathrm{C}%
\end{array}%
\right) \left(
\begin{array}{c}
u_{1} \\
u_{2}%
\end{array}%
\right)  \label{A_Wentzell1} \\
& =\mathrm{A_{W}^{\alpha ,0,0,\omega }}\binom{u_{1}}{u_{2}}+\binom{0}{\nu
\mathrm{C}u_{2}},  \notag
\end{align}%
with%
\begin{equation}
D\left( \mathrm{A_{W}^{\alpha ,\beta ,\nu ,\omega }}\right) :=\left\{ U=%
\binom{u_{1}}{u_{2}}\in \mathbb{Y}:-\Delta u_{1}\in L^{2}\left( \Omega
\right) ,\text{ }\omega \partial _{n}u_{1}-\nu \mathrm{C}u_{2}\in
L^{2}\left( \Gamma \right) \right\} ,  \label{A_Wentzell2}
\end{equation}%
where $\mathbb{Y}:=\mathbb{V}_{0}^{1}$ if $\nu =0,$ and $\mathbb{Y}:=\mathbb{%
V}^{1}$ if $\nu >0.$ It is well-known that $(\mathrm{A_{W}^{\alpha ,\beta
,\nu ,\omega }},D(\mathrm{A_{W}^{\alpha ,\beta ,\nu ,\omega })})$ is
self-adjoint and nonnegative operator on $\mathbb{X}^{2}$ whenever $\alpha
,\beta ,\nu \geq 0,$ and $\mathrm{A_{W}^{\alpha ,\beta ,\nu ,\omega }}>0$ if
either $\alpha >0$ or $\beta >0$. Moreover, the resolvent operator $(I+%
\mathrm{A_{W}^{\alpha ,\beta ,\nu ,\omega }})^{-1}\in \mathcal{L}\left(
\mathbb{X}^{2}\right) $ is compact. Moreover, since $\Gamma $ is of class $%
\mathcal{C}^{2},$ then $D(\mathrm{A_{W}^{\alpha ,\beta ,\nu ,\omega }})=%
\mathbb{V}^{2}$ if $\nu >0$. Indeed, for any $\alpha ,\beta \geq 0$ with $%
\left( \alpha ,\beta \right) \neq \left( 0,0\right) ,$ the map $\Psi
:U\mapsto \mathrm{A_{W}^{\alpha ,\beta ,\nu ,\omega }}U,$ when viewed as a
map from $\mathbb{V}_{2}$ into $\mathbb{X}^{2}=L^{2}\left( \Omega \right)
\times L^{2}\left( \Gamma \right) ,$ is an isomorphism and there exists a
positive constant $C_{\ast }$, independent of $U=\left( u,\psi \right) ^{%
\mathrm{tr}}$, such that%
\begin{equation}
C_{\ast }^{-1}\left\Vert U\right\Vert _{\mathbb{V}^{2}}\leq \left\Vert \Psi
\left( U\right) \right\Vert _{\mathbb{X}^{2}}\leq C_{\ast }\left\Vert
U\right\Vert _{\mathbb{V}^{2}},  \label{regularity-oper}
\end{equation}%
for all $U\in \mathbb{V}^{2}$ (cf. Lemma \ref{t:appendix-lemma-3}). Whenever
$\nu =0$, by elliptic regularity theory and $U\in D(\mathrm{A_{W}^{\alpha
,\beta ,0,\omega }})$ one has $u\in H^{3/2}\left( \Omega \right) $ and $\psi
={\mathrm{tr_{D}}}\left( u\right) \in H^{1}\left( \Gamma \right) $, since
the Dirichlet-to-Neumann map is bounded from $H^{1}\left( \Gamma \right) $
to $L^{2}\left( \Gamma \right) $; hence $D(\mathrm{A_{W}^{\alpha ,\beta
,0,\omega }})=\mathbb{W}$, where $\mathbb{W}$ is the Hilbert space equipped
with the following (equivalent) norm%
\begin{equation*}
\left\Vert U\right\Vert _{\mathbb{W}}^{2}:=\left\Vert U\right\Vert _{\mathbb{%
V}_{0}^{3/2}}^{2}+\left\Vert \Delta u\right\Vert _{L^{2}\left( \Omega
\right) }^{2}+\left\Vert \partial _{n}u\right\Vert _{L^{2}\left( \Gamma
\right) }^{2}.
\end{equation*}%
We refer the reader to more details to e.g., \cite{CGGM10}, \cite%
{Gal&Warma10}, \cite{CFGGGOR09} and the references therein. We now have all
the necessary ingredients to introduce a rigorous formulation of problem
\textbf{P} in the next section.

\section{Variational formulation and well-posedness}

\label{vf}

We need the following hypotheses for problem \textbf{P}. For the function $%
\mu _{S},$ $S\in \left\{ \Omega ,\Gamma \right\} $, given by (\ref{eq:mu-1}%
), we consider the following assumptions (cf., e.g. \cite{CPS06}, \cite%
{GPM98} and \cite{GPM00}). Assume
\begin{align}
& \mu _{S}\in C^{1}(\mathbb{R}_{+})\cap L^{1}(\mathbb{R}_{+}),  \label{miu1}
\\
& \mu _{S}(s)\geq 0~~\text{for all}~~s\geq 0,  \label{miu2} \\
& \mu _{S}^{\prime }(s)\leq 0~~\text{for all}~~s\geq 0.  \label{miu3}
\end{align}%
These assumptions are equivalent to assuming that $m_{S}(s),$ $S\in \left\{
\Omega ,\Gamma \right\} $, is a bounded, positive, nonincreasing, convex
function of class $C^{2}$. These conditions are commonly used in the
literature (see, for example, \cite{CPS06}, \cite{GPM98} and \cite{Grasselli}%
) to establish existence and uniqueness of continuous global weak solutions
for Coleman-Gurtin type equations subject to Dirichlet boundary conditions.

As far as natural conditions for the nonlinear terms are concerned, we
assume $f$, $g\in C^{1}(\mathbb{R})$ satisfy the sign conditions%
\begin{equation}
f^{\prime }(s)\geq -M_{f},\text{ }g^{\prime }(s)\geq -M_{g},\text{ for all }%
s\in \mathbb{R}\text{,}  \label{eq:assumption-2}
\end{equation}%
for some $M_{f},M_{g}>0$ and the growth assumptions, for all $s\in \mathbb{R}
$,
\begin{equation}
|f(s)|\leq \ell _{1}(1+|s|^{r_{1}-1}),\text{ }|g(s)|\leq \ell
_{2}(1+|s|^{r_{2}-1}),  \label{eq:assumption-4}
\end{equation}%
for some positive constants $\ell _{1}$ and $\ell _{2}$, and where $%
r_{1},r_{2}\geq 2$. Let now%
\begin{equation}
\widetilde{g}\left( s\right) :=g\left( s\right) -\nu \beta s,\text{ for }%
s\in \mathbb{R}\text{.}  \label{tilda-f}
\end{equation}%
In addition, we assume there exists $\varepsilon \in (0,\omega )$ so that
the following balance condition%
\begin{equation}
\liminf_{|s|\rightarrow \infty }\frac{f(s)s+\frac{|\Gamma |}{|\Omega |}%
\widetilde{g}(s)s-\frac{C_{\Omega }^{2}|\Gamma |^{2}}{4\varepsilon |\Omega
|^{2}}|\widetilde{g}^{\prime }(s)s+\widetilde{g}(s)|^{2}}{\left\vert
s\right\vert ^{r_{1}}}>0  \label{eq:assumption-5}
\end{equation}%
holds for $r_{1}\geq \max \{r_{2},2(r_{2}-1)\}$. The number $C_{\Omega }>0$
is the best Sobolev constant in the following Sobolev-Poincar\'{e} inequality%
\begin{equation}
\left\Vert u-\left\langle u\right\rangle _{\Gamma }\right\Vert _{L^{2}\left(
\Omega \right) }\leq C_{\Omega }\left\Vert \nabla u\right\Vert
_{L^{2}(\Omega )},\text{ }\left\langle u\right\rangle _{\Gamma }:=\frac{1}{%
\left\vert \Gamma \right\vert }\int\limits_{\Gamma }\mathrm{tr_{D}}\left(
u\right) d\sigma ,  \label{eq:Poincare}
\end{equation}%
for all $u\in H^{1}\left( \Omega \right) $, see \cite[Lemma 3.1]{RBT01}.

The assumption (\ref{eq:assumption-5}) deserves some additional comments.
Suppose that that for $\left\vert y\right\vert \rightarrow \infty ,$ both
the internal and boundary functions behave accordingly to the following laws:%
\begin{equation}
\lim_{\left\vert y\right\vert \rightarrow \infty }\frac{f^{^{\prime }}\left(
y\right) }{\left\vert y\right\vert ^{r_{1}-2}}=\left( r_{1}-1\right) c_{f}%
\text{, }\lim_{\left\vert y\right\vert \rightarrow \infty }\frac{\widetilde{g%
}^{^{\prime }}\left( y\right) }{\left\vert y\right\vert ^{r_{2}-2}}=\left(
r_{2}-1\right) c_{\widetilde{g}},  \label{poly}
\end{equation}%
for some $c_{f},c_{\widetilde{g}}\in \mathbb{R}\backslash \left\{ 0\right\} $%
. In particular, it holds%
\begin{equation*}
f\left( y\right) y\sim c_{f}\left\vert y\right\vert ^{r_{1}},\text{ }%
\widetilde{g}\left( y\right) y\sim c_{\widetilde{g}}\left\vert y\right\vert
^{r_{2}}\text{ as }\left\vert y\right\vert \rightarrow \infty .
\end{equation*}%
For the case of bulk dissipation (i.e., $c_{f}>0$) and anti-dissipative
behavior at the boundary $\Gamma $ (i.e., $c_{\widetilde{g}}<0$), assumption
(\ref{eq:assumption-5}) is automatically satisfied provided that $r_{1}>\max
\{r_{2},2(r_{2}-1)\}$. Furthermore, if $2<r_{2}<2\left( r_{2}-1\right)
=r_{1} $ and%
\begin{equation}
c_{f}>\frac{1}{4\varepsilon }\left( \frac{C_{\Omega }\left\vert \Gamma
\right\vert c_{\widetilde{g}}r_{2}}{\left\vert \Omega \right\vert }\right)
^{2},  \label{poly2}
\end{equation}%
for some $\varepsilon \in (0,\omega )$, then once again (\ref%
{eq:assumption-5}) is satisfied. In the case when $f$ and $g$ are sublinear
(i.e., $r_{1}=r_{2}=2$ in (\ref{eq:assumption-4})), the condition (\ref%
{eq:assumption-5}) is also automatically satisfied provided that%
\begin{equation}
\left( c_{f}+\frac{|\Gamma |}{|\Omega |}c_{\widetilde{g}}\right) >\frac{1}{%
\varepsilon }\left( \frac{C_{\Omega }\left\vert \Gamma \right\vert c_{%
\widetilde{g}}}{\left\vert \Omega \right\vert }\right) ^{2}  \label{poly3}
\end{equation}%
for some $\varepsilon \in \left( 0,\omega \right) $. Of course, when both
the bulk and boundary nonlinearities are dissipative, i.e., there exist two
constants $C_{f}>0,C_{g}>0$ such that, additionally to (\ref{eq:assumption-4}%
),%
\begin{equation}
\left\{
\begin{array}{l}
f\left( s\right) s\geq C_{f}\left\vert s\right\vert ^{r_{1}}, \\
\widetilde{g}\left( s\right) s\geq C_{g}\left\vert s\right\vert ^{r_{2}},%
\end{array}%
\right.  \label{disf1}
\end{equation}%
for all $\left\vert s\right\vert \geq s_{0}$, for some sufficiently large $%
s_{0}>0,$ condition (\ref{eq:assumption-5}) can be dropped and is no longer
required (see \cite{Gal12-2}).

In order to introduce a rigorous formulation for problem \textbf{P}, we
define
\begin{equation}
D(\mathrm{T}):=\left\{ \Phi =\binom{\eta ^{t}}{\xi ^{t}}\in \mathcal{M}%
_{\Omega ,\Gamma }^{1}:\partial _{s}\Phi \in \mathcal{M}_{\Omega ,\Gamma
}^{1},\text{ }\Phi (0)=0\right\}  \label{eq:memory-4}
\end{equation}%
and consider the linear (unbounded) operator $\mathrm{T}:D(\mathrm{T}%
)\rightarrow \mathcal{M}_{\Omega ,\Gamma }^{1}$ by%
\begin{equation*}
\mathrm{T}\Phi =-\binom{\frac{d\eta }{ds}}{\frac{d\xi }{ds}},\text{ }\Phi =%
\binom{\eta ^{t}}{\xi ^{t}}\in D(\mathrm{T}).
\end{equation*}%
The follow result can be proven following \cite[Theorem 3.1]{Grasselli}.

\begin{proposition}
\label{t:generator-T} The operator $\mathrm{T}$ with domain $D(\mathrm{T})$
is an infinitesimal generator of a strongly continuous semigroup of
contractions on $\mathcal{M}_{\Omega ,\Gamma }^{1}$, denoted $e^{\mathrm{T}%
t} $.
\end{proposition}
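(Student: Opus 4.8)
The plan is to apply the Lumer--Phillips theorem, which characterizes infinitesimal generators of strongly continuous contraction semigroups on a Hilbert space. Concretely, on the Hilbert space $\mathcal{M}_{\Omega,\Gamma}^{1}$ I must verify three things: that $\mathrm{T}$ is densely defined, that $\mathrm{T}$ is dissipative (i.e. $\langle \mathrm{T}\Phi,\Phi\rangle_{\mathcal{M}_{\Omega,\Gamma}^{1}}\le 0$ for all $\Phi\in D(\mathrm{T})$), and that $\lambda I-\mathrm{T}$ is surjective for some (hence all) $\lambda>0$. The density of $D(\mathrm{T})$ follows from standard density arguments for weighted $L^2$ spaces of histories, since smooth compactly supported $\mathbb{V}^1$-valued functions vanishing at $s=0$ are dense in $\mathcal{M}_{\Omega,\Gamma}^{1}$ and lie in $D(\mathrm{T})$.

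The dissipativity is the computational heart of the argument and is where the hypotheses \eqref{miu1}--\eqref{miu3} on the kernels $\mu_{\Omega},\mu_{\Gamma}$ enter. For $\Phi=(\eta,\xi)^{\mathrm{tr}}\in D(\mathrm{T})$ I would compute, using the inner product on $\mathcal{M}_{\Omega,\Gamma}^{1}$ displayed just before Lemma~\ref{t:appendix-lemma-3},
\begin{equation*}
\langle \mathrm{T}\Phi,\Phi\rangle_{\mathcal{M}_{\Omega,\Gamma}^{1}}
=-\,\omega\!\int_{0}^{\infty}\!\mu_{\Omega}(s)\Big(\langle \nabla\partial_{s}\eta,\nabla\eta\rangle_{L^{2}(\Omega)}+\alpha\langle \partial_{s}\eta,\eta\rangle_{L^{2}(\Omega)}\Big)ds
-\,\nu\!\int_{0}^{\infty}\!\mu_{\Gamma}(s)\Big(\langle \nabla_{\Gamma}\partial_{s}\xi,\nabla_{\Gamma}\xi\rangle_{L^{2}(\Gamma)}+\beta\langle \partial_{s}\xi,\xi\rangle_{L^{2}(\Gamma)}\Big)ds.
\end{equation*}
Recognizing each integrand as $\tfrac12\partial_{s}$ of the corresponding squared norm, I integrate by parts in $s$. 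The boundary terms at $s=0$ vanish because $\Phi(0)=0$, and the terms at $s=\infty$ vanish because $\mu_{S}\in L^1$ with $\mu_S(s)\to 0$; the surviving contribution is $\tfrac12\int_0^\infty \mu_S'(s)\,\|\cdot\|^2\,ds$, which is nonpositive precisely because $\mu_S'\le0$ by \eqref{miu3}. This yields $\langle \mathrm{T}\Phi,\Phi\rangle_{\mathcal{M}_{\Omega,\Gamma}^{1}}\le 0$, i.e. $\mathrm{T}$ is dissipative.

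For the range condition I fix $\lambda>0$ and solve $(\lambda I-\mathrm{T})\Phi=\Psi$, i.e. $\lambda\eta+\partial_s\eta=\tilde\eta$ and $\lambda\xi+\partial_s\xi=\tilde\xi$ componentwise with $\Phi(0)=0$, whose explicit solution is the variation-of-constants formula
\begin{equation*}
\eta(s)=\int_{0}^{s}e^{-\lambda(s-y)}\,\tilde\eta(y)\,dy,\qquad
\xi(s)=\int_{0}^{s}e^{-\lambda(s-y)}\,\tilde\xi(y)\,dy.
\end{equation*}
The main technical obstacle is then showing that this candidate $\Phi$ genuinely belongs to $D(\mathrm{T})$, i.e. that both $\Phi$ and $\partial_s\Phi$ lie in $\mathcal{M}_{\Omega,\Gamma}^{1}$ with the correct weights $\mu_\Omega,\mu_\Gamma$. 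This requires a weighted-norm estimate bounding $\|\eta\|_{\mathcal{M}}$ by $\|\tilde\eta\|_{\mathcal{M}}$, for which one applies Young's convolution inequality together with the monotonicity of $\mu_S$ (so that the weight does not grow along the convolution); the membership $\partial_s\Phi\in\mathcal{M}_{\Omega,\Gamma}^{1}$ then follows directly from the differential equation $\partial_s\Phi=\Psi-\lambda\Phi$. Finally $\Phi(0)=0$ is immediate from the formula. Having verified density, dissipativity, and the range condition, the Lumer--Phillips theorem delivers that $\mathrm{T}$ generates the strongly continuous contraction semigroup $e^{\mathrm{T}t}$ on $\mathcal{M}_{\Omega,\Gamma}^{1}$, which is the translation (right-shift) semigroup $(e^{\mathrm{T}t}\Phi)(s)=\Phi(s-t)$ for $s>t$ and $0$ otherwise, as in \cite[Theorem 3.1]{Grasselli}.
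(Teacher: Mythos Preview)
Your proposal is correct and follows exactly the approach the paper intends: the paper does not give its own proof but simply refers to \cite[Theorem 3.1]{Grasselli}, which is the Lumer--Phillips argument you have spelled out (density, dissipativity via integration by parts in $s$ using $\mu_S'\le 0$, and the range condition solved by the variation-of-constants formula with the weighted estimate coming from the monotonicity of $\mu_S$). Nothing is missing.
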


As a consequence, we also have (cf., e.g. \cite[Corollary IV.2.2]{Pazy83}).

\begin{corollary}
\label{t:memory-regularity-1} Let $T>0$ and assume $U=\binom{u}{v}\in
L^{1}(0,T;\mathbb{V}^{1})$. Then, for every $\Phi _{0}\in \mathcal{M}%
_{\Omega ,\Gamma }^{1}$, the Cauchy problem for $\Phi ^{t}=\binom{\eta ^{t}}{%
\xi ^{t}},$%
\begin{equation}
\left\{
\begin{array}{ll}
\partial _{t}\Phi ^{t}=\mathrm{T}\Phi ^{t} +U(t), & \text{for}~~t>0, \\
\Phi ^{0}=\Phi _{0}, &
\end{array}%
\right.  \label{eq:memory-1}
\end{equation}%
has a unique (mild) solution $\Phi \in C([0,T];\mathcal{M}_{\Omega ,\Gamma
}^{1})$ which can be explicitly given as%
\begin{equation}
\Phi ^{t}(s)=\left\{
\begin{array}{ll}
\displaystyle\int_{0}^{s}U(t-y)dy, & \text{for}~~0<s\leq t, \\
\displaystyle\Phi _{0}(s-t)+\int_{s}^{t}U(t-y)dy, & \text{for }~~s>t,%
\end{array}%
\right.  \label{eq:representation-formula-1}
\end{equation}%
cf. also \cite[Section 3.2]{CPS06} and \cite[Section 3]{Grasselli}.
\end{corollary}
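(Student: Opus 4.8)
The plan is to read \eqref{eq:memory-1} as the abstract inhomogeneous Cauchy problem attached to the $C_{0}$-semigroup produced in Proposition \ref{t:generator-T}, and then to extract the explicit formula \eqref{eq:representation-formula-1} by evaluating the corresponding variation-of-parameters integral. First I would check that the forcing is admissible as a time-dependent element of the memory space: since $\mu_{S}\in L^{1}(\mathbb{R}_{+})$ by \eqref{miu1}, every constant-in-$s$ function belongs to $\mathcal{M}_{\Omega,\Gamma}^{1}$ with $\|c\|_{\mathcal{M}_{\Omega,\Gamma}^{1}}\le C\|c\|_{\mathbb{V}^{1}}$, so the map $t\mapsto U(t)$, regarded as the $s$-independent element of $\mathcal{M}_{\Omega,\Gamma}^{1}$, lies in $L^{1}(0,T;\mathcal{M}_{\Omega,\Gamma}^{1})$ whenever $U\in L^{1}(0,T;\mathbb{V}^{1})$. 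With $(\mathrm{T},D(\mathrm{T}))$ the generator of the contraction semigroup $e^{\mathrm{T}t}$ from Proposition \ref{t:generator-T}, the standard mild-solution theory for inhomogeneous abstract evolution equations (precisely \cite[Corollary IV.2.2]{Pazy83}) then delivers a unique mild solution $\Phi\in C([0,T];\mathcal{M}_{\Omega,\Gamma}^{1})$, given by the Duhamel formula
\[
\Phi^{t}=e^{\mathrm{T}t}\Phi_{0}+\int_{0}^{t}e^{\mathrm{T}(t-\tau)}U(\tau)\,d\tau .
\]

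The second step is to identify $e^{\mathrm{T}t}$ concretely. Because $\mathrm{T}\Phi=-\partial_{s}\Phi$ on the domain \eqref{eq:memory-4}, which carries the constraint $\Phi(0)=0$, the homogeneous equation $\partial_{t}\Phi^{t}=\mathrm{T}\Phi^{t}$ is the transport equation $\partial_{t}\Phi^{t}+\partial_{s}\Phi^{t}=0$, whose characteristics are the lines $s-t=\mathrm{const}$. Integrating along characteristics, using the boundary value $\Phi^{t}(0)=0$ on those characteristics that reach $s=0$ and the initial datum $\Phi_{0}$ on those that reach $t=0$, yields the right-translation (zero-extension) semigroup
\[
(e^{\mathrm{T}t}\Phi_{0})(s)=
\begin{cases}
0, & 0<s\le t,\\
\Phi_{0}(s-t), & s>t.
\end{cases}
\]
I would then confirm that this candidate is indeed the semigroup generated by $\mathrm{T}$, either by verifying directly that it is strongly continuous, contractive, and has generator $(\mathrm{T},D(\mathrm{T}))$, or simply by appealing to the uniqueness of the generated semigroup asserted in Proposition \ref{t:generator-T}.

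Finally, I would substitute this explicit form into the Duhamel formula. Applied to the $s$-independent forcing one has $(e^{\mathrm{T}(t-\tau)}U(\tau))(s)=U(\tau)$ for $s>t-\tau$ and $0$ otherwise, so the convolution integral collapses to $\int_{\{\tau\in[0,t]:\,\tau>t-s\}}U(\tau)\,d\tau$; splitting according to whether $0<s\le t$ or $s>t$ and changing variables via $y=t-\tau$ produces the two branches of \eqref{eq:representation-formula-1}, with the homogeneous term $\Phi_{0}(s-t)$ contributing only in the region $s>t$. I expect no genuine obstacle: the whole argument is a specialization of the abstract semigroup machinery, and the only points requiring care are the correct handling of the boundary constraint $\Phi(0)=0$ when pinning down the translation semigroup, and the bookkeeping of the integration limits in the convolution when passing to the piecewise representation.
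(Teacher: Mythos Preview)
Your proposal is correct and follows precisely the route the paper indicates: the paper gives no detailed proof but simply invokes \cite[Corollary IV.2.2]{Pazy83} together with the semigroup from Proposition~\ref{t:generator-T}, and points to \cite[Section 3.2]{CPS06} and \cite[Section 3]{Grasselli} for the explicit representation. Your argument---embedding $U(t)$ as an $s$-constant element of $\mathcal{M}_{\Omega,\Gamma}^{1}$ via $\mu_{S}\in L^{1}(\mathbb{R}_{+})$, applying the Duhamel formula, identifying $e^{\mathrm{T}t}$ as the right-translation semigroup with zero extension, and then unwinding the convolution---is exactly what those references contain and what the paper is deferring to.
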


\begin{remark}
(i) Note that, from assumption (\ref{miu3}), the following inequality%
\begin{equation}
\left\langle \mathrm{T} \Phi,\Phi \right\rangle _{\mathcal{M}_{\Omega
,\Gamma }^{1}}\leq 0  \label{eq:operator-T-1}
\end{equation}%
holds for all $\Phi \in D(\mathrm{T})$.

(ii) If $\Phi _{0}\in D(\mathrm{T})$ and $\partial _{t}U\in L^{1}\left( 0,T;%
\mathbb{V}^{1}\right) $, the function $\Phi ^{t}$ given by (\ref%
{eq:representation-formula-1}) satisfies (\ref{eq:memory-1}) in the strong
sense a.e. on $\left( 0,T\right) ,$ for any $T>0.$
\end{remark}

We are now ready to introduce the rigorous (variational) formulation of
problem \textbf{P}.

\begin{definition}
\label{weak}Let $\alpha ,\beta >0$, $\omega ,\nu \in (0,1)$ and $T>0$. Given
$\binom{u_{0}}{v_{0}}\in \mathbb{X}^{2}$, $\binom{\eta _{0}}{\xi _{0}}\in
\mathcal{M}_{\Omega ,\Gamma }^{1}$, we seek to find functions $U\left(
t\right) =\binom{u\left( t\right) }{v\left( t\right) },$ $\Phi ^{t}=\binom{%
\eta ^{t}}{\xi ^{t}}$ with the following properties:%
\begin{align}
U& \in L^{\infty }\left( 0,T;\mathbb{X}^{2}\right) \cap L^{2}(0,T;\mathbb{V}%
^{1}),\text{ }\Phi \in L^{\infty }\left( 0,T;\mathcal{M}_{\Omega ,\Gamma
}^{1}\right) ,  \label{eq:problem-p-1} \\
u& \in L^{r_{1}}(\Omega \times \left( 0,T\right) ),\text{ }v\in
L^{r_{2}}(\Gamma \times (0,T)),  \label{eq:problem-p-1bis} \\
\partial _{t}U& \in L^{2}\left( 0,T;(\mathbb{V}^{1})^{\ast }\right) \oplus
\left( L^{r_{1}^{\prime }}(\Omega \times (0,T))\times L^{r_{2}^{\prime
}}(\Gamma \times (0,T))\right) ,  \label{eq:problem-p-1bbis} \\
\partial _{t}\Phi & \in L^{2}\left( 0,T;W_{\mu _{\Omega }\oplus \mu _{\Gamma
}}^{-1,2}(\mathbb{R}_{+};\mathbb{V}^{1})\right) .
\label{eq:problem-p-1bbbis}
\end{align}%
$\left( U,\Phi ^{t}\right) $ is said to be a weak solution to problem
\textbf{P} if $v\left( t\right) ={\mathrm{tr_{D}}}\left( u\left( t\right)
\right) $ and $\xi ^{t}={\mathrm{tr_{D}}}\left( \eta ^{t}\right) $ for
almost all $t\in (0,T]$, and $\left( U\left( t\right) ,\Phi ^{t}\right) $
satisfies, for almost all $t\in (0,T]$,%
\begin{equation}
\begin{array}{ll}
\left\langle \partial _{t}U(t),\Xi \right\rangle _{\mathbb{X}%
^{2}}+\left\langle \mathrm{A_{W}^{0,\beta ,\nu ,\omega }}U(t),\Xi
\right\rangle _{\mathbb{X}^{2}}+\int_{0}^{\infty }\mu _{\Omega
}(s)\left\langle \mathrm{A_{W}^{\alpha ,0,0,\omega }}\Phi ^{t}\left(
s\right) ,\Xi \right\rangle _{\mathbb{X}^{2}}ds &  \\
+\nu \int_{0}^{\infty }\mu _{\Gamma }(s)\left\langle \mathrm{C}\xi
^{t}\left( s\right) ,\varsigma _{\mid \Gamma }\right\rangle _{L^{2}\left(
\Gamma \right) }ds+\left\langle F\left( U(t)\right) ,\Xi \right\rangle _{%
\mathbb{X}^{2}}=0, &  \\
\left\langle \partial _{t}\eta ^{t},\rho \right\rangle _{\mathcal{M}_{\Omega
}^{1}}=\left\langle -\frac{d}{ds}\eta ^{t},\rho \right\rangle _{\mathcal{M}%
_{\Omega }^{1}}+\left\langle u(t),\rho \right\rangle _{\mathcal{M}_{\Omega
}^{1}}, &  \\
\left\langle \partial _{t}\xi ^{t},\rho _{\mid \Gamma }\right\rangle _{%
\mathcal{M}_{\Gamma }^{1}}=\left\langle -\frac{d}{ds}\xi ^{t},\rho _{\mid
\Gamma }\right\rangle _{\mathcal{M}_{\Gamma }^{1}}+\left\langle v(t),\rho
_{\mid \Gamma }\right\rangle _{\mathcal{M}_{\Gamma }^{1}}, &
\end{array}
\label{eq:problem-p-2}
\end{equation}%
for all $\Xi =\binom{\varsigma }{\varsigma _{\mid \Gamma }}\in \mathbb{V}%
^{1}\oplus \left( L^{r_{1}}(\Omega )\times L^{r_{2}}(\Gamma )\right) $, all $%
\Pi =\binom{\rho }{\rho _{\mid \Gamma }}\in \mathcal{M}_{\Omega ,\Gamma
}^{1} $ and%
\begin{equation}
U\left( 0\right) =U_{0}=\left( u_{0},v_{0}\right) ^{{\mathrm{tr}}},\text{ }%
\Phi ^{0}=\Phi _{0}=\left( \eta _{0},\xi _{0}\right) ^{{\mathrm{tr}}}.
\label{eq:problem-p-3}
\end{equation}%
Above, we have set $F:\mathbb{R}^{2}\rightarrow \mathbb{R}^{2},$%
\begin{equation*}
F\left( U\right) :=\binom{f\left( u\right) }{\widetilde{g}\left( v\right) },
\end{equation*}%
with $\widetilde{g}$ defined as in (\ref{tilda-f}). The function $[0,T]\ni
t\mapsto (U(t),\Phi ^{t})$ is called a global weak solution if it is a weak
solution for every $T>0$.
\end{definition}

In the sequel, if the initial datum $\left( U_{0},\Phi _{0}\right) $ is more
smooth, the following notion of strong solution will also become important.

\begin{definition}
\label{weak2}Let $\alpha ,\beta >0$, $\omega ,\nu \in (0,1)$ and $T>0$.
Given $\binom{u_{0}}{v_{0}}\in \mathbb{V}^{1}$, $\binom{\eta _{0}}{\xi _{0}}%
\in \mathcal{M}_{\Omega ,\Gamma }^{2}$, the pair of functions $U\left(
t\right) =\binom{u\left( t\right) }{v\left( t\right) },$ $\Phi ^{t}=\binom{%
\eta ^{t}}{\xi ^{t}}$ satisfying
\begin{align}
U& \in L^{\infty }\left( 0,T;\mathbb{V}^{1}\right) \cap L^{2}(0,T;\mathbb{V}%
^{2}),\text{ } \\
\Phi & \in L^{\infty }(0,T;\mathcal{M}_{\Omega ,\Gamma }^{2}),  \notag \\
\partial _{t}U& \in L^{\infty }\left( 0,T;(\mathbb{V}^{1})^{\ast }\right)
\cap L^{2}\left( 0,T;\mathbb{X}^{2}\right) ,  \notag \\
\partial _{t}\Phi & \in L^{2}\left( 0,T;L_{\mu _{\Omega }\oplus \mu _{\Gamma
}}^{2}\left( \mathbb{R}_{+};\mathbb{X}^{2}\right) \right) ,  \notag
\end{align}%
is called a strong solution to problem \textbf{P} if $v\left( t\right) ={%
\mathrm{tr_{D}}}\left( u\left( t\right) \right) $ and $\xi ^{t}={\mathrm{%
tr_{D}}}\left( \eta ^{t}\right) $ for almost all $t\in (0,T]$, and
additionally, $\left( U\left( t\right) ,\Phi ^{t}\right) $ satisfies (\ref%
{eq:problem-p-2}), a.e. for $t\in (0,T]$, for all $\Xi \in \mathbb{V}^{1}$, $%
\Pi \in \mathcal{M}_{\Omega ,\Gamma }^{1}$, and%
\begin{equation}
U\left( 0\right) =U_{0}=\left( u_{0},v_{0}\right) ^{{\mathrm{tr}}},\text{ }%
\Phi ^{0}=\Phi _{0}=\left( \eta _{0},\xi _{0}\right) ^{{\mathrm{tr}}}.
\end{equation}%
The function $[0,T]\ni t\mapsto (U(t),\Phi ^{t})$ is called a global strong
solution if it is a strong solution for every $T>0$.
\end{definition}

\begin{remark}
\label{qw}Note that a strong solution is incidently more smooth than a weak
solution in the sense of Definition \ref{weak}. Moreover, on account of
standard embedding theorems the regularity $U\in L^{\infty }\left( 0,T;%
\mathbb{V}^{1}\right) \cap L^{2}(0,T;\mathbb{V}^{2})$ implies that%
\begin{equation*}
u\in L^{\infty }\left( 0,T;L^{6}\left( \Omega \right) \right) \cap
L^{q}\left( 0,T;L^{p}\left( \Omega \right) \right)
\end{equation*}%
for any $p\in \left( 6,\infty \right) $ , $1\leq q\leq 4p/\left( p-6\right) $%
, and ${\mathrm{tr_{D}}}\left( u\right) \in L^{\infty }\left(
0,T;L^{s}\left( \Omega \right) \right) $, for any $s\in \left( 1,\infty
\right) $.
\end{remark}

Another notion of strong solution to problem \textbf{P}, although weaker
than the notion in Definition \ref{weak2}, can be introduced as follows.

\begin{definition}
\label{d:strong-solution} The pair $U=\binom{u}{v}$ and $\Phi =\binom{\eta }{%
\xi }$ is called a quasi-strong solution of problem \textbf{P} on $[0,T)$ if
$(U(t),\Phi ^{t})$ satisfies the equations (\ref{eq:problem-p-2})-(\ref%
{eq:problem-p-3}) for all $\Xi \in \mathbb{V}^{1}$, $\Pi \in \mathcal{M}%
_{\Omega ,\Gamma }^{1}$, almost everywhere on $\left( 0,T\right) $ and if it
has the regularity properties:
\begin{eqnarray}
U &\in &L^{\infty }(0,T;\mathbb{V}^{1})\cap W^{1,2}(0,T;\mathbb{V}^{1}),
\label{eq:strong-defn-1} \\
\Phi &\in &L^{\infty }(0,T;D\left( \mathrm{T}\right) ),
\label{eq:strong-defn-2} \\
\partial _{t}U &\in &L^{\infty }\left( 0,T;\mathbb{X}^{2}\right) ,
\label{eq:strong-defn-3} \\
\partial _{t}\Phi &\in &L^{\infty }\left( 0,T;\mathcal{M}_{\Omega ,\Gamma
}^{1}\right) .  \label{eq:strong-defn-4}
\end{eqnarray}%
As before, the function $[0,T]\ni t\mapsto (U(t),\Phi ^{t})$ is called a
global quasi-strong solution if it is a quasi-strong solution for every $T>0$%
.
\end{definition}

Our first result in this section is contained in the following theorem. It
allows us to obtain generalized solutions in the sense of Definition \ref%
{weak}.

\begin{theorem}
\label{t:weak-solutions}Assume (\ref{miu1})-(\ref{miu3}) and (\ref%
{eq:assumption-4})-(\ref{eq:assumption-5}) hold. For each $\alpha ,\beta >0$%
, $\omega ,\nu \in (0,1)$ and $T>0$, and for any $U_{0}=(u_{0},v_{0})^{{%
\mathrm{tr}}}\in \mathbb{X}^{2}$, $\Phi _{0}=(\eta _{0},\xi _{0})^{{\mathrm{%
tr}}}\in \mathcal{M}_{\Omega ,\Gamma }^{1},$ there exists at least one
(global) weak solution $\left( U,\Phi \right) \in C(\left[ 0,T\right] ;%
\mathcal{H}_{\Omega ,\Gamma }^{0,1})$\ to problem \textbf{P}.
\end{theorem}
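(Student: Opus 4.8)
The plan is to prove existence by a Faedo-Galerkin scheme, uniform a priori estimates, and a compactness argument, in the spirit of \cite{CPS06,Grasselli}. First I would fix the spectral basis $\{w_{j}\}_{j\geq 1}\subset\mathbb{V}^{2}$ of eigenfunctions of the Wentzell operator $\mathrm{A_{W}^{\alpha,\beta,\nu,\omega}}$, which by Section \ref{fs} is an orthonormal basis of $\mathbb{X}^{2}$ and orthogonal in $\mathbb{V}^{1}$, and look for $U_{n}(t)\in\mathrm{span}\{w_{1},\dots,w_{n}\}$; for the history variable I would, following \cite{CPS06}, recover $\Phi_{n}^{t}$ from $U_{n}$ through the mild representation (\ref{eq:representation-formula-1}) of Corollary \ref{t:memory-regularity-1}, or equivalently project the transport equation (\ref{eqmtris}) onto a basis of $\mathcal{M}_{\Omega,\Gamma}^{1}$ adapted to $\mathrm{T}$. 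Projecting (\ref{eq:problem-p-2}) onto $\mathrm{span}\{w_{1},\dots,w_{n}\}$ yields a system of ODEs whose right-hand side is locally Lipschitz because $f,g\in C^{1}(\mathbb{R})$, so Cauchy-Peano theory produces a local approximate solution $(U_{n},\Phi_{n})$.

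Next I would establish the basic energy estimate, uniform in $n$. Testing the first line of (\ref{eq:problem-p-2}) with $\Xi=U_{n}$ and the history equations with $\Pi=\Phi_{n}$, the coupling terms $\int_{0}^{\infty}\mu_{\Omega}(s)\langle\mathrm{A_{W}^{\alpha,0,0,\omega}}\Phi_{n}^{t}(s),U_{n}\rangle_{\mathbb{X}^{2}}ds$ and $\nu\int_{0}^{\infty}\mu_{\Gamma}(s)\langle\mathrm{C}\xi_{n}^{t}(s),v_{n}\rangle_{L^{2}(\Gamma)}ds$ exactly cancel the forcing $\langle U_{n},\Phi_{n}\rangle_{\mathcal{M}_{\Omega,\Gamma}^{1}}$ generated by (\ref{eqmtris}), where the trace identity $v_{n}=\mathrm{tr_{D}}(u_{n})$ kills the boundary terms produced upon integration by parts, leaving
\[
\tfrac{1}{2}\tfrac{d}{dt}\big(\|U_{n}\|_{\mathbb{X}^{2}}^{2}+\|\Phi_{n}\|_{\mathcal{M}_{\Omega,\Gamma}^{1}}^{2}\big)+\|U_{n}\|_{\mathbb{V}^{1}}^{2}+\langle F(U_{n}),U_{n}\rangle_{\mathbb{X}^{2}}=\langle\mathrm{T}\Phi_{n},\Phi_{n}\rangle_{\mathcal{M}_{\Omega,\Gamma}^{1}}.
\]
Here I use $\langle\mathrm{A_{W}^{0,\beta,\nu,\omega}}U_{n},U_{n}\rangle_{\mathbb{X}^{2}}=\|U_{n}\|_{\mathbb{V}^{1}}^{2}$, and the right-hand side is nonpositive by (\ref{eq:operator-T-1}). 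Thus everything hinges on bounding the nonlinearity $\langle F(U_{n}),U_{n}\rangle=\int_{\Omega}f(u_{n})u_{n}\,dx+\int_{\Gamma}\widetilde{g}(v_{n})v_{n}\,d\sigma$ from below.

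This is the main obstacle, since $\widetilde{g}$ may be anti-dissipative at infinity. Here I would follow the balance-condition method of \cite{Gal12-2}: comparing the boundary integral $\int_{\Gamma}\widetilde{g}(v_{n})v_{n}\,d\sigma$ with the bulk integral $\tfrac{|\Gamma|}{|\Omega|}\int_{\Omega}\widetilde{g}(u_{n})u_{n}\,dx$ via the Sobolev-Poincar\'{e} inequality (\ref{eq:Poincare}) applied to $\widetilde{g}(u_{n})u_{n}$, and then applying Young's inequality with a weight $\varepsilon\in(0,\omega)$, produces precisely the quantity $\tfrac{C_{\Omega}^{2}|\Gamma|^{2}}{4\varepsilon|\Omega|^{2}}|\widetilde{g}^{\prime}(s)s+\widetilde{g}(s)|^{2}$ appearing in (\ref{eq:assumption-5}) together with a remainder of the form $\varepsilon\|\nabla u_{n}\|_{L^{2}(\Omega)}^{2}$. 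The balance condition (\ref{eq:assumption-5}) then guarantees, for every $s\in\mathbb{R}$,
\[
f(s)s+\tfrac{|\Gamma|}{|\Omega|}\widetilde{g}(s)s-\tfrac{C_{\Omega}^{2}|\Gamma|^{2}}{4\varepsilon|\Omega|^{2}}|\widetilde{g}^{\prime}(s)s+\widetilde{g}(s)|^{2}\geq c_{0}|s|^{r_{1}}-c_{1},
\]
for some $c_{0}>0$, $c_{1}\geq0$; integrating and absorbing $\varepsilon\|\nabla u_{n}\|^{2}$ into the coercive $\omega\|\nabla u_{n}\|^{2}\leq\|U_{n}\|_{\mathbb{V}^{1}}^{2}$ yields
\[
\langle F(U_{n}),U_{n}\rangle_{\mathbb{X}^{2}}\geq-\varepsilon\|\nabla u_{n}\|_{L^{2}(\Omega)}^{2}-C,
\]
together with a uniform bound on $\int_{0}^{T}\int_{\Omega}|u_{n}|^{r_{1}}$. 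Feeding this into the energy identity and applying Gronwall's lemma gives the uniform bounds $U_{n}\in L^{\infty}(0,T;\mathbb{X}^{2})\cap L^{2}(0,T;\mathbb{V}^{1})$ and $\Phi_{n}\in L^{\infty}(0,T;\mathcal{M}_{\Omega,\Gamma}^{1})$ (the same bounds rule out finite-time blow-up, so $(U_{n},\Phi_{n})$ exists on all of $[0,T]$), while the growth assumption (\ref{eq:assumption-4}) bounds $f(u_{n})$ and $\widetilde{g}(v_{n})$ in $L^{r_{1}^{\prime}}(\Omega\times(0,T))$ and $L^{r_{2}^{\prime}}(\Gamma\times(0,T))$ respectively (cf. (\ref{eq:problem-p-1bbis})), and the equation then controls $\partial_{t}U_{n}$ and $\partial_{t}\Phi_{n}$ in the dual spaces of (\ref{eq:problem-p-1bbis})-(\ref{eq:problem-p-1bbbis}).

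Finally I would pass to the limit. Since $\mathbb{V}^{1}$ embeds compactly into $\mathbb{X}^{2}$, the Aubin-Lions-Simon lemma gives strong convergence $U_{n}\to U$ in $L^{2}(0,T;\mathbb{X}^{2})$, hence, along a subsequence, $u_{n}\to u$ a.e. in $\Omega\times(0,T)$ and $v_{n}\to v$ a.e. on $\Gamma\times(0,T)$; combined with the dual-space bounds on the nonlinearities this identifies their weak limits as $f(u)$ and $\widetilde{g}(v)$. The linear and memory terms pass to the limit by weak-star convergence, the latter requiring only the linear transport structure (\ref{eqmtris}) and the representation (\ref{eq:representation-formula-1}), so that no compactness of the non-compact space $\mathcal{M}_{\Omega,\Gamma}^{1}$ is needed. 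The constraints $v=\mathrm{tr_{D}}(u)$, $\xi^{t}=\mathrm{tr_{D}}(\eta^{t})$ and the initial data survive the limit, giving a weak solution in the sense of Definition \ref{weak}. Continuity in time, $(U,\Phi)\in C([0,T];\mathcal{H}_{\Omega,\Gamma}^{0,1})$, follows from $U\in C_{w}([0,T];\mathbb{X}^{2})$ upgraded to strong continuity by the energy equality obtained in the limit, while $\Phi\in C([0,T];\mathcal{M}_{\Omega,\Gamma}^{1})$ is immediate from Corollary \ref{t:memory-regularity-1}.
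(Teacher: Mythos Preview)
Your proposal is correct and follows essentially the same route as the paper: Galerkin approximation via the Wentzell eigenbasis, the energy identity obtained by testing with $(U_n,\Phi_n)$, the balance-condition argument from \cite{Gal12-2} to control $\langle F(U_n),U_n\rangle_{\mathbb{X}^2}$ from below via the Poincar\'e inequality (\ref{eq:Poincare}), and then Aubin--Lions compactness to pass to the limit. One step you glide over is the bound $v_n\in L^{r_2}(\Gamma\times(0,T))$, which is needed before the growth condition (\ref{eq:assumption-4}) can yield $\widetilde{g}(v_n)\in L^{r_2'}(\Gamma\times(0,T))$; the energy estimate only gives $u_n\in L^{r_1}(\Omega\times(0,T))$ and $U_n\in L^2(0,T;\mathbb{V}^1)$, and the paper closes this gap using the trace inequality of Lemma~\ref{t:appendix-lemma-1}, which is precisely where the structural restriction $r_1\geq\max\{r_2,2(r_2-1)\}$ built into (\ref{eq:assumption-5}) enters. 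For the continuity $U\in C([0,T];\mathbb{X}^2)$ the paper invokes \cite[Proposition~2.5]{Gal12-2} rather than an energy-equality argument, but your alternative is standard and equally valid.
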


\begin{proof}
The proof is divided into several steps. Much of the motivation for the
above theorem comes from \cite{Gal12-2}. Indeed, the dissipativity induced
by the balance condition (\ref{eq:assumption-5}) will be exploited to obtain
an \emph{apriori} bound. Of course, several modifications need to be made in
order to incorporate the dynamic boundary conditions with memory into the
framework.

\underline{Step 1}. (An \emph{apriori} bound) To begin, we derive an \emph{%
apriori} energy estimate for any (sufficiently) smooth solution $(U,\Phi )$
of problem \textbf{P}. Under the assumptions of the theorem, we claim that
the following estimate holds:%
\begin{align}
& \Vert U(t)\Vert _{\mathbb{X}^{2}}^{2}+\left\Vert \Phi ^{t}\right\Vert _{%
\mathcal{M}_{\Omega ,\Gamma }^{1}}^{2}-2\left\langle \mathrm{T}\Phi^t
,\Phi^t \right\rangle _{\mathcal{M}_{\Omega ,\Gamma
}^{1}}+2\int_{0}^{t}\left( \Vert U(\tau )\Vert _{\mathbb{V}^{1}}^{2}+\Vert
u(\tau )\Vert _{L^{r_{1}}(\Omega )}^{r_{1}}\right) d\tau
\label{eq:a-priori-estimate} \\
& \leq C_{T}\left( 1+\Vert U(0)\Vert _{\mathbb{X}^{2}}^{2}+\left\Vert \Phi
^{0}\right\Vert _{\mathcal{M}_{\Omega ,\Gamma }^{1}}^{2}\right) ,  \notag
\end{align}%
for all $t\in \lbrack 0,T]$, for some constant $C>0$, independent of $%
(U,\Phi )$ and $t$.

We now show (\ref{eq:a-priori-estimate}). In Definition \ref{weak} we are
allowed to take, for almost all $t\in \lbrack 0,T]$,
\begin{equation*}
\Xi =U(t)=\left( u(t),u(t)_{\mid \Gamma }\right) ^{{\mathrm{tr}}}\in \mathbb{%
V}^{1}\cap \left( L^{r_{1}}(\Omega )\times L^{r_{2}}(\Gamma )\right)
\end{equation*}%
and%
\begin{equation*}
\Pi =\Phi ^{t}=\left( \eta ^{t},\xi ^{t}\right) ^{{\mathrm{tr}}}\in \mathcal{%
M}_{\Omega ,\Gamma }^{1}.
\end{equation*}%
Then we obtain the differential identities%
\begin{equation}
\frac{1}{2}\frac{d}{dt}\Vert U\Vert _{\mathbb{X}^{2}}^{2}+\left\langle
\mathrm{A_{W}^{0,\beta ,\nu ,\omega }}U,U\right\rangle _{\mathbb{X}%
^{2}}+\left\langle \Phi ^{t},U\right\rangle _{\mathcal{M}_{\Omega ,\Gamma
}^{1}}+\left\langle F(U),U\right\rangle _{\mathbb{X}^{2}}=0,
\label{eq:a-priori-estimate-1}
\end{equation}%
where%
\begin{align}
\left\langle \Phi ^{t},U\right\rangle _{\mathcal{M}_{\Omega ,\Gamma }^{1}}&
=\omega \int_{0}^{\infty }\mu _{\Omega }(s)\left( \left\langle \nabla \eta
^{t}\left( s\right) ,\nabla u\right\rangle _{L^{2}\left( \Omega \right)
}+\alpha \left\langle \eta ^{t}\left( s\right) ,u\right\rangle _{L^{2}\left(
\Omega \right) }\right) ds  \label{identity*} \\
& +\nu \int_{0}^{\infty }\mu _{\Gamma }(s)\left( \left\langle \nabla
_{\Gamma }\xi ^{t}\left( s\right) ,\nabla _{\Gamma }u\right\rangle
_{L^{2}\left( \Gamma \right) }+\beta \left\langle \xi ^{t}\left( s\right)
,u\right\rangle _{L^{2}\left( \Gamma \right) }\right) ds  \notag \\
& =\int_{0}^{\infty }\mu _{\Omega }(s)\left\langle \mathrm{A_{W}^{\alpha
,0,0,\omega }}\Phi ^{t}\left( s\right) ,U\right\rangle _{\mathbb{X}%
^{2}}ds+\nu \int_{0}^{\infty }\mu _{\Gamma }(s)\left\langle \mathrm{C}\xi
^{t}\left( s\right) ,u\right\rangle _{L^{2}\left( \Gamma \right) }ds,  \notag
\end{align}%
and
\begin{equation}
\ \frac{1}{2}\frac{d}{dt}\Vert \Phi ^{t}\Vert _{\mathcal{M}_{\Omega ,\Gamma
}^{1}}^{2}=\left\langle \mathrm{T}\Phi ^{t},\Phi ^{t}\right\rangle _{%
\mathcal{M}_{\Omega ,\Gamma }^{1}}+\left\langle U,\Phi ^{t}\right\rangle _{%
\mathcal{M}_{\Omega ,\Gamma }^{1}},  \label{eq:a-priori-estimate-1bis}
\end{equation}%
which hold for almost all $t\in \lbrack 0,T]$. Adding these identities
together and recalling (\ref{eq:operator-T-1}), we obtain%
\begin{align}
& \frac{1}{2}\frac{d}{dt}\left( \Vert U\Vert _{\mathbb{X}^{2}}^{2}+\Vert
\Phi ^{t}\Vert _{\mathcal{M}_{\Omega ,\Gamma }^{1}}^{2}\right) -\left\langle
\mathrm{T}\Phi^t,\Phi^t \right\rangle _{\mathcal{M}_{\Omega ,\Gamma
}^{1}}+\left( \omega \Vert \nabla u\Vert _{L^{2}\left( \Omega \right)
}^{2}+\nu \Vert \nabla _{\Gamma }u\Vert _{L^{2}(\Gamma )}^{2}+\beta
\left\Vert u\right\Vert _{L^{2}\left( \Gamma \right) }^{2}\right)
\label{eq:a-priori-estimate-1tris} \\
& \leq -\left\langle f(u),u\right\rangle _{L^{2}\left( \Omega \right)
}-\left\langle \widetilde{g}\left( u\right) ,u\right\rangle _{L^{2}\left(
\Gamma \right) }.  \notag
\end{align}%
Following \cite[(2.22)]{Gal12-2} and \cite[(3.11)]{RBT01}, we estimate the
product with $F$ on the right-hand side of (\ref{eq:a-priori-estimate-1tris}%
), as follows:%
\begin{align}
\left\langle F(U),U\right\rangle _{\mathbb{X}^{2}}& =\left\langle
f(u),u\right\rangle _{L^{2}\left( \Omega \right) }+\left\langle \widetilde{g}%
\left( u\right) ,u\right\rangle _{L^{2}\left( \Gamma \right) }
\label{eq:a-priori-estimate-2} \\
& =\int_{\Omega }\left( f(u)u+\frac{|\Gamma |}{|\Omega |}\widetilde{g}%
(u)u\right) dx-\frac{|\Gamma |}{|\Omega |}\int_{\Omega }\left( \widetilde{g}%
(u)u-\frac{1}{|\Gamma |}\int_{\Gamma }\widetilde{g}(u)u\mathrm{d}\sigma
\right) dx.  \notag
\end{align}%
Exploiting Poincar\'{e} inequality (\ref{eq:Poincare}) and Young's
inequality, we see that for all $\varepsilon \in (0,\omega )$,
\begin{align}
\frac{|\Gamma |}{|\Omega |}\int_{\Omega }\left( \widetilde{g}(u)u-\frac{1}{%
|\Gamma |}\int_{\Gamma }\widetilde{g}(u)ud\sigma \right) dx & \leq C_{\Omega
}\frac{|\Gamma |}{|\Omega |}\int_{\Omega }|\nabla (\widetilde{g}(u)u)|dx
\label{eq:a-priori-estimate-4} \\
& =C_{\Omega }\frac{|\Gamma |}{|\Omega |}\int_{\Omega }|\nabla u(\widetilde{g%
}^{\prime }(u)u+\widetilde{g}(u))|dx  \notag \\
& \leq \varepsilon \Vert \nabla u\Vert _{L^{2}(\Omega )}^{2}+\frac{C_{\Omega
}^{2}|\Gamma |^{2}}{4\varepsilon |\Omega |^{2}}\int_{\Omega }|\widetilde{g}%
^{\prime }(u)u+\widetilde{g}(u)|^{2}dx.  \notag
\end{align}%
Combining (\ref{eq:a-priori-estimate-2})-(\ref{eq:a-priori-estimate-4}) and
applying assumption (\ref{eq:assumption-5}) yields%
\begin{equation}
\left\langle F(U),U\right\rangle _{\mathbb{X}^{2}}\geq \delta\Vert u\Vert
_{L^{r_{1}}(\Omega )}^{r_{1}}-\varepsilon \Vert \nabla u\Vert _{L^{2}(\Omega
)}^{2}-C_{\delta},  \label{eq:a-priori-estimate-4bis}
\end{equation}%
for some positive constants $\delta$ and $C_{\delta}$ that are independent
of $U$, $t$ and $\varepsilon $. Plugging (\ref{eq:a-priori-estimate-4bis})
into (\ref{eq:a-priori-estimate-1tris}) gives, for almost all $t\in \lbrack
0,T]$,%
\begin{align}
& \frac{1}{2}\frac{d}{dt}\left( \Vert U\Vert _{\mathbb{X}^{2}}^{2}+\Vert
\Phi ^{t}\Vert _{\mathcal{M}_{\varepsilon }^{0}}^{2}\right) -\left\langle
\mathrm{T}\Phi ^t,\Phi^t \right\rangle _{\mathcal{M}_{\Omega ,\Gamma
}^{1}}+\left( \omega -\varepsilon \right) \Vert \nabla u\Vert _{L^{2}\left(
\Omega \right) }^{2}  \label{eq:a-priori-estimate-3} \\
& +\left( \nu \Vert \nabla _{\Gamma }u\Vert _{L^{2}(\Gamma )}^{2}+\beta
\left\Vert u\right\Vert _{L^{2}\left( \Gamma \right) }^{2}\right)
+\delta\Vert u\Vert _{L^{r_{1}}(\Omega )}^{r_{1}}  \notag \\
& \leq C.  \notag
\end{align}%
Integrating (\ref{eq:a-priori-estimate-3} over the interval $\left(
0,t\right) $ yields the desired estimate (\ref{eq:a-priori-estimate}).
Additionally, from the above \emph{apriori} estimate (\ref%
{eq:a-priori-estimate}), we immediately see that
\begin{align}
U& \in L^{\infty }\left(0,T;\mathbb{X}^{2}\right)\cap L^{2}\left( 0,T;%
\mathbb{V}^{1}\right) ,  \label{eq:a-priori-bound-11} \\
\Phi & \in L^{\infty }\left(0,T;\mathcal{M}_{\Omega ,\Gamma }^{1}\right),
\label{eq:a-priori-bound-12} \\
u& \in L^{r_{1}}\left(\Omega \times (0,T)\right).
\label{eq:a-priori-bound-13}
\end{align}%
Applying Lemma \ref{t:appendix-lemma-1}, in view of of (\ref%
{eq:a-priori-bound-11}) and (\ref{eq:a-priori-bound-13}), we also get%
\begin{equation}
u\in L^{r_{2}}(\Gamma \times (0,T)).  \label{eq:a-priori-bound-15}
\end{equation}%
Thus, we indeed recover the bounds (\ref{eq:problem-p-1})-(\ref%
{eq:problem-p-1bis}) through estimate (\ref{eq:a-priori-estimate}).
Moreover, we have from (\ref{eq:a-priori-bound-13}) and (\ref%
{eq:a-priori-bound-15}) that $f\left( u\right) \in L^{r_{1}^{\prime
}}(\Omega \times \left( 0,T\right) )$, $\widetilde{g}\left( v\right) \in
L^{r_{2}^{\prime }}(\Gamma \times \left( 0,T\right) )$; hence,%
\begin{equation}
F(U)\in L^{r_{1}^{\prime }}(\Omega \times \left( 0,T\right) )\times
L^{r_{2}^{\prime }}(\Gamma \times \left( 0,T\right) ).
\label{eq:a-priori-bound-16}
\end{equation}%
Clearly, since $U\in L^{2}(0,T;\mathbb{V}^{1})$ and $\Phi \in L^{\infty
}(0,T;\mathcal{M}_{\Omega ,\Gamma }^{1})$ we also have $\mathrm{%
A_{W}^{0,\beta ,\nu ,\omega }}\Phi (s)\in L^{2}(0,T;(\mathbb{V}^{1})^{\ast
}) $ for almost all $s\in \mathbb{R}_{+},$ and $\mathrm{A_{W}^{0,\beta ,\nu
,\omega }}U\in L^{2}(0,T;(\mathbb{V}^{1})^{\ast }),$ respectively.
Therefore, after comparing terms in the first equation of (\ref%
{eq:problem-p-2}), we see that
\begin{equation}
\partial _{t}U\in L^{2}\left( 0,T;(\mathbb{V}^{1})^{\ast }\right) \oplus
\left( L^{r_{1}^{\prime }}(\Omega \times \left( 0,T\right) )\times
L^{r_{2}^{\prime }}(\Gamma \times \left( 0,T\right) )\right) .
\label{eq:a-priori-bound-17}
\end{equation}%
Hence, this justifies our choice of test function for the first of (\ref%
{eq:problem-p-2}). Concerning the second equation of (\ref{eq:problem-p-2}),
in view of (\ref{eq:a-priori-bound-11}) and the representation formula (\ref%
{eq:representation-formula-1}) we have%
\begin{equation*}
\mathrm{T}\Phi^{t}(s)=-\partial _{s}\Phi ^{t}(s)=\left\{
\begin{array}{ll}
-U(t-s) & \text{for}~0<s\leq t, \\
-\partial _{s}\Phi _{0}(s-t)+U(t-s) & \text{for}~s>t.%
\end{array}%
\right.
\end{equation*}%
Then, with a given $\Phi _{0}\in \mathcal{M}_{\Omega ,\Gamma }^{1},$ $%
\partial _{s}\Phi _{0}(\cdot )\in W_{\mu _{\Omega }\oplus \mu _{\Gamma
}}^{-1,2}\left( \mathbb{R}_{+};\mathbb{V}^{1}\right) ,$ we conclude%
\begin{equation}
\partial _{t}\Phi \in L^{2}\left(0,T;W_{\mu _{\Omega }\oplus \mu _{\Gamma
}}^{-1,2}\left( \mathbb{R}_{+};\mathbb{V}^{1}\right)\right).
\label{eq:a-priori-bound-18}
\end{equation}%
This concludes Step 1.

\underline{Step 2}. (A Galerkin basis) First, for any $\alpha ,\beta \geq $ $%
0$ we recall that ($\mathrm{A_{W}^{\alpha ,\beta ,\nu ,\omega })}^{-1}\in
\mathcal{L}\left( \mathbb{X}^{2}\right) $ is compact provided that either $%
\beta >0$ or $\alpha >0$. This means that, for $i\in \mathbb{N}$, there is a
complete system of eigenfunctions $\Psi _{i}^{\alpha ,\beta ,\nu ,\omega
}=(\vartheta _{i}^{\alpha ,\beta ,\nu ,\omega },\vartheta _{i\mid \Gamma
}^{\alpha ,\beta ,\nu ,\omega })^{\mathrm{tr}}$ of the eigenvalue problem
\begin{equation*}
\mathrm{A_{W}^{\alpha ,\beta ,\nu ,\omega }}\Psi _{i}^{\alpha ,\beta ,\nu
,\omega }=\lambda _{i}\Psi _{i}^{\alpha ,\beta ,\nu ,\omega }\text{ in }%
\mathbb{X}^{2}
\end{equation*}%
with
\begin{equation*}
\Psi _{i}^{\alpha ,\beta ,\nu ,\omega }\in D\left(\mathrm{A_{W}^{\alpha
,\beta ,\nu ,\omega }}\right)\cap \left( C^{2}({\overline{\Omega }})\times
C^{2}\left( \Gamma \right) \right) ,
\end{equation*}%
see \cite[Appendix]{Gal12-3}. The eigenvalues $\lambda _{i}=\lambda
_{i}^{\alpha ,\beta ,\nu ,\omega }\in (0,\infty )$ may be put into
increasing order and counted according to their multiplicity to form a
divergent sequence going to infinity. In addition, also due to standard
spectral theory, the related eigenfunctions form an orthogonal basis in $%
\mathbb{V}^{1}$ that is orthonormal in $\mathbb{X}^{2}$. Note that for each $%
i\in \mathbb{N}$, the pair $\left( \lambda _{i},\vartheta _{i}\right) \in
\mathbb{R}_{+}\times C^{2}\left( \overline{\Omega }\right) ,$ $\vartheta
_{i}=\vartheta _{i}^{\alpha ,\beta ,\nu ,\omega },$ is a classical solution
of the elliptic problem%
\begin{equation}
\left\{
\begin{array}{ll}
-\omega \Delta \vartheta _{i}+\alpha \omega \vartheta _{i}=\lambda
_{i}\vartheta _{i}, & \text{in }\Omega , \\
-\nu \Delta _{\Gamma }\left( \vartheta _{i\mid \Gamma }\right) +\omega
\partial _{n}\vartheta _{i}+\beta \nu \vartheta _{i\mid \Gamma }=\lambda
_{i}\vartheta _{i\mid \Gamma }, & \text{on }\Gamma .%
\end{array}%
\right.  \label{eigen-p}
\end{equation}%
It remains to select an orthonormal basis $\{\zeta _{i}\}_{i=1}^{\infty }$
of $\mathcal{M}_{\Omega ,\Gamma }^{1}=L_{\mu _{\Omega }\oplus \mu _{\Gamma
}}^{2}(\mathbb{R}_{+};\mathbb{V}^{1})$ that also belongs to $D(\mathrm{T}%
)\cap W_{\mu _{\Omega }\oplus \mu _{\Gamma }}^{1,2}(\mathbb{R}_{+};\mathbb{V}%
^{1})$. We can choose vectors $\zeta _{i}=\varkappa _{i}\Psi _{i}^{\alpha
,\beta ,\nu ,\omega }$, with eigenvectors $\Psi _{i}^{\alpha ,\beta ,\nu
,\omega }\in D(\mathrm{A_{W}^{\alpha ,\beta ,\nu ,\omega }})$ satisfying (%
\ref{eigen-p}) above, such that $\{\varkappa _{i}\}_{i=1}^{\infty }\in
C_{c}^{\infty }(\mathbb{R}_{+})$ is an orthonormal basis for $L_{\mu
_{\Omega }\oplus \mu _{\Gamma }}^{2}(\mathbb{R}_{+})$. This choice will be
crucial for the derivation of strong solutions in the section later.

Let $T>0$ be fixed. For $n\in \mathbb{N}$, set the spaces
\begin{equation*}
X_{n}=\mathrm{span}\left\{ \Psi _{1}^{\alpha ,\beta ,\nu ,\omega },\dots
,\Psi _{n}^{\alpha ,\beta ,\nu ,\omega }\right\} \subset \mathbb{X}%
^{2},~~X_{\infty }=\bigcup_{n=1}^{\infty }X_{n},
\end{equation*}%
and
\begin{equation*}
M_{n}=\mathrm{span}\left\{ \zeta _{1},\zeta _{2},\dots ,\zeta _{n}\right\}
\subset \mathcal{M}_{\Omega ,\Gamma }^{1},~~M_{\infty
}=\bigcup_{n=1}^{\infty }M_{n}.
\end{equation*}%
Obviously, $X_{\infty }$ is a dense subspace of $\mathbb{V}^{1}$. For each $%
n\in \mathbb{N}$, let $P_{n}:\mathbb{X}^{2}\rightarrow X_{n}$ denote the
orthogonal projection of $\mathbb{X}^{2}$ onto $X_{n}$ and let $Q_{n}:%
\mathcal{M}_{\Omega ,\Gamma }^{1}\rightarrow M_{n}$ denote the orthogonal
projection of $\mathcal{M}_{\Omega ,\Gamma }^{1}$ onto $M_{n}$. Thus, we
seek functions of the form
\begin{equation}
U_{n}(t)=\sum_{i=1}^{n}a_{i}(t)\Psi _{i}^{\alpha ,\beta ,\nu ,\omega }~~%
\text{and}~~\Phi _{n}^{t}(s)=\sum_{i=1}^{n}b_{i}(t)\zeta
_{i}(s)=\sum_{i=1}^{n}b_{i}(t)\varkappa _{i}\left( s\right) \Psi
_{i}^{\alpha ,\beta ,\nu ,\omega }  \label{eq:approximate-1}
\end{equation}%
that will satisfy the associated discretized problem \textbf{P}$_{n}$
described below. The functions $a_{i}$ and $b_{i}$ are assumed to be (at
least) $C^{2}(0,T)$ for $i=1,\dots ,n$. By definition, note that%
\begin{equation}
u_{n}(t)=\sum_{i=1}^{n}a_{i}(t)\vartheta _{i}^{\alpha ,\beta ,\nu ,\omega }~~%
\text{and}~~u_{n}(t)_{\mid \Gamma }=\sum_{i=1}^{n}a_{i}(t)\vartheta _{i\mid
\Gamma }^{\alpha ,\beta ,\nu ,\omega },  \label{eq:approximate-3}
\end{equation}%
also
\begin{equation}
\eta _{n}^{t}(s)=\sum_{i=1}^{n}b_{i}(t)\zeta _{i}(s)~~\text{and}~~\xi
_{n}^{t}(s)=\sum_{i=1}^{n}b_{i}(t)\zeta _{i}(s)_{\mid \Gamma }.
\label{eq:approximate-4}
\end{equation}%
As usual, to approximate the given initial data $U_{0}\in \mathbb{X}^{2}$
and $\Phi _{0}\in \mathcal{M}_{\Omega ,\Gamma }^{1}$, we take $U_{n0}\in
\mathbb{V}^{1}$ such that $U_{n0}\rightarrow U_{0}~~$(in $\mathbb{X}^{2}$),
since $\mathbb{V}^{1}$ is dense in $\mathbb{X}^{2}$, and $\Phi
_{n0}\rightarrow \Phi _{0}~~$(in $\mathcal{M}_{\Omega ,\Gamma }^{1}$).

For $T>0$ and for each integer $n\geq 1$, the weak formulation of the
approximate problem \textbf{P}$_{n}$ is the following: find $(U_{n},\Phi
_{n})$, given by (\ref{eq:approximate-1}) such that, for all ${\overline{U}}%
=(\bar{u},\bar{v})^{\mathrm{tr}}\in X_{n}$ and ${\overline{\Phi }}=(\bar{\eta%
},\bar{\xi})^{\mathrm{tr}}\in M_{n}$, the equations%
\begin{equation}
\left\langle \partial _{t}U_{n},{\overline{U}}\right\rangle _{\mathbb{X}%
^{2}}+\left\langle \mathrm{A_{W}^{0,\beta ,\nu ,\omega }}U_{n},{\overline{U}}%
\right\rangle _{\mathbb{X}^{2}}+\left\langle \Phi _{n}^{t},{\overline{U}}%
\right\rangle _{\mathcal{M}_{\Omega ,\Gamma }^{1}}+\left\langle
P_{n}F(U_{n}),{\overline{U}}\right\rangle _{\mathbb{X}^{2}}=0
\label{eq:approx-ode-1}
\end{equation}%
and
\begin{equation}
\left\langle \partial _{t}\Phi _{n}^{t},{\overline{\Phi }}\right\rangle _{%
\mathcal{M}_{\Omega ,\Gamma }^{1}}=\left\langle \mathrm{T}\Phi_{n}^{t},{%
\overline{\Phi }}\right\rangle _{\mathcal{M}_{\Omega ,\Gamma
}^{1}}+\left\langle U_{n},{\overline{\Phi }}\right\rangle _{\mathcal{M}%
_{\Omega ,\Gamma }^{1}}  \label{eq:approx-ode-2}
\end{equation}%
hold for almost all $t\in \left( 0,T\right) $, subject to the initial
conditions
\begin{equation}
\left\langle U_{n}(0),{\overline{U}}\right\rangle _{\mathbb{X}%
^{2}}=\left\langle U_{n0},{\overline{U}}\right\rangle _{\mathbb{X}^{2}}~~%
\text{and}~~\left\langle \Phi _{n}^{0},{\overline{\Phi }}\right\rangle _{%
\mathcal{M}_{\Omega ,\Gamma }^{1}}=\left\langle \Phi _{n0},{\overline{\Phi }}%
\right\rangle _{\mathcal{M}_{\Omega ,\Gamma }^{1}}.  \label{eq:approx-ode-3}
\end{equation}

To show the existence of at least one solution to (\ref{eq:approx-ode-1})-(%
\ref{eq:approx-ode-3}), we now suppose that $n$ is fixed and we take ${%
\overline{U}}=\Psi _{k}$ and ${\overline{\Phi }}=\zeta _{k}$ for some $1\leq
k\leq n$. Then substituting the discretized functions (\ref{eq:approximate-1}%
) into (\ref{eq:approx-ode-1})-(\ref{eq:approx-ode-3}), we easily arrive at
a system of ordinary differential equations in the unknowns $a_{k}=a_{k}(t)$
and $b_{k}=b_{k}(t)$ on $X_{n}$ and $M_{n},$ respectively. We need to recall
that%
\begin{equation*}
\langle P_{n}F(U_{n}),U_{k}\rangle =\langle F(U_{n}),P_{n}U_{k}\rangle
=\langle F(U_{n}),U_{k}\rangle .
\end{equation*}%
Since $f,$ $g\in C^{1}(\mathbb{R})$, we may apply Cauchy's theorem for ODEs
to find that there is $T_{n}\in (0,T)$ such that $a_{k},b_{k}\in
C^{2}(0,T_{n})$, for $1\leq k\leq n$ and both (\ref{eq:approx-ode-1}) and (%
\ref{eq:approx-ode-2}) hold in the classical sense for all $t\in \lbrack
0,T_{n}]$. This argument shows the existence of at least one local solution
to problem \textbf{P}$_{n}$ and ends Step 2.

\underline{Step 3}. (Boundedness and continuation of approximate maximal
solutions) Now we apply the (uniform) \emph{apriori} estimate (\ref%
{eq:a-priori-estimate}) which also holds for any approximate solution $%
(U_{n},\Phi _{n})$ of problem \textbf{P}$_{n}$ on the interval $[0,T_{n})$,
where $T_{n}<T$. Owing to the boundedness of the projectors $P_{n}$ and $%
Q_{n}$ on the corresponding spaces, we infer%
\begin{align}
& \Vert U_{n}(t)\Vert _{\mathbb{X}^{2}}^{2}+\left\Vert \Phi
_{n}^{t}\right\Vert _{\mathcal{M}_{\Omega ,\Gamma }^{1}}^{2}-2\left\langle
\mathrm{T}\Phi _{n}^{t},\Phi _{n}^{t}\right\rangle _{\mathcal{M}_{\Omega
,\Gamma }^{1}}+2\int_{0}^{t}\left( \Vert U_{n}(\tau )\Vert _{\mathbb{V}%
^{1}}^{2}+\Vert u_{n}(\tau )\Vert _{L^{r_{1}}(\Omega )}^{r_{1}}\right) d\tau
\label{eq:a-priori-bound-1} \\
& \leq C_{T}\left( 1+\Vert U(0)\Vert _{\mathbb{X}^{2}}^{2}+\left\Vert \Phi
^{0}\right\Vert _{\mathcal{M}_{\Omega ,\Gamma }^{1}}^{2}\right) ,  \notag
\end{align}%
for some constant $C_{T}>0$ independent of $n$ and $t$. Hence, every
approximate solution may be extended to the whole interval $[0,T]$, and
because $T>0$ is arbitrary, any approximate solution is a global one. As in
Step 1, we also obtain the uniform bounds (\ref{eq:a-priori-bound-11})-(\ref%
{eq:a-priori-bound-18}) for each approximate solution $(U_{n},\Phi _{n})$.
Thus,%
\begin{align}
U_{n}& ~\text{is uniformly bounded in}~L^{\infty }\left(0,T;\mathbb{X}%
^{2}\right),  \label{eq:uniform-bounds-1} \\
U_{n}& ~\text{is uniformly bounded in}~L^{2}\left(0,T;\mathbb{V}^{1}\right),
\label{eq:uniform-bounds-2} \\
u_{n}& ~\text{is uniformly bounded in}~L^{r_{1}}(\Omega \times \left(
0,T\right) ),  \label{eq:uniform-bounds-3} \\
u_{n}& ~\text{is uniformly bounded in}~L^{r_{2}}(\Gamma \times \left(
0,T\right) ),  \label{eq:uniform-bounds-4} \\
\Phi _{n}& ~\text{is uniformly bounded in}~L^{\infty }\left(0,T;\mathcal{M}%
_{\Omega ,\Gamma }^{1}\right),  \label{eq:uniform-bounds-5} \\
F(U_{n})& ~\text{is uniformly bounded in}~L^{r_{1}^{\prime }}(\Omega \times
\left( 0,T\right) )\times L^{r_{2}^{\prime }}(\Gamma \times \left(
0,T\right) ),  \label{eq:uniform-bounds-6} \\
\partial _{t}U_{n}& ~\text{is uniformly bounded in}~L^{2}\left( 0,T;\left(%
\mathbb{V}^{1}\right)^{\ast }\right) \oplus \left( L^{r_{1}^{\prime
}}(\Omega \times \left( 0,T\right) )\times L^{r_{2}^{\prime }}(\Gamma \times
\left( 0,T\right) )\right) ,  \label{eq:uniform-bounds-7} \\
\partial _{t}\Phi _{n}& ~\text{is uniformly bounded in}~L^{2}\left(0,T;W_{%
\mu _{\Omega }\oplus \mu _{\Gamma }}^{-1,2}\left( \mathbb{R}_{+};\mathbb{V}%
^{1}\right)\right).  \label{eq:uniform-bounds-8}
\end{align}%
This concludes Step 3.

\underline{Step 4}. (Convergence of approximate solutions) By Alaoglu's
theorem (cf. e.g. \cite[Theorem 6.64]{Renardy&Rogers04}) and the uniform
bounds (\ref{eq:uniform-bounds-1})-(\ref{eq:uniform-bounds-6}), there is a
subsequence of $(U_{n},\Phi _{n})$, generally not relabelled, and functions $%
U$ and $\Phi $, obeying (\ref{eq:a-priori-bound-11})-(\ref%
{eq:a-priori-bound-18}), such that as $n\rightarrow \infty $,
\begin{equation}
\begin{array}{ll}
U_{n}\rightharpoonup U & \text{weakly-* in }L^{\infty }\left( 0,T;\mathbb{X}%
^{2}\right) , \\
U_{n}\rightharpoonup U & \text{weakly in }L^{2}\left( 0,T;\mathbb{V}%
^{1}\right) , \\
u_{n}\rightharpoonup u & \text{weakly in }L^{r_{1}}(\Omega \times \left(
0,T\right) ), \\
u_{n}\rightharpoonup u & \text{weakly in }L^{r_{2}}(\Gamma \times \left(
0,T\right) ), \\
\Phi _{n}\rightharpoonup \Phi & \text{weakly-* in }L^{\infty }\left( 0,T;%
\mathcal{M}_{\Omega ,\Gamma }^{1}\right) .%
\end{array}
\label{eq:weak-convergence-5}
\end{equation}%
Moreover, setting $k_{S}:=(-\mu _{S}^{^{\prime }})^{1/2}\geq 0$, $S\in
\left\{ \Omega ,\Gamma \right\} $ we have%
\begin{equation}
\partial _{t}U_{n}\rightharpoonup \partial _{t}U~\text{weakly in }%
L^{2}\left( 0,T;\left( \mathbb{V}^{1}\right) ^{\ast }\right) \oplus \left(
L^{r_{1}^{\prime }}(\Omega \times \left( 0,T\right) )\times L^{r_{2}^{\prime
}}(\Gamma \times \left( 0,T\right) )\right) ,
\label{eq:weak-convergence-5bis}
\end{equation}%
\begin{equation}
\Phi _{n}\rightharpoonup \Phi \text{ weakly in }L^{2}\left( 0,T;L_{k_{\Omega
}\oplus k_{\Gamma }}^{2}\left( \mathbb{R}_{+};\mathbb{V}^{1}\right) \right) ,
\label{eq:weak-convergence-5biss}
\end{equation}%
owing to the bound on $\left\langle \mathrm{T}\Phi _{n},\Phi
_{n}\right\rangle _{\mathcal{M}_{\Omega ,\Gamma }^{1}}$ from (\ref%
{eq:a-priori-bound-1}) and%
\begin{equation}
\partial _{t}\Phi _{n}\rightarrow \partial _{t}\Phi ~\text{weakly in}%
~L^{2}\left( 0,T;W_{\mu _{\Omega }\oplus \mu _{\Gamma }}^{-1,2}\left(
\mathbb{R}_{+};\mathbb{V}^{1}\right) \right) .
\label{eq:weak-convergence-5tris}
\end{equation}%
Indeed, we observe that the last of (\ref{eq:weak-convergence-5}) and
integration by parts yield, for any $\zeta \in C_{0}^{\infty }\left(
J;C_{0}^{\infty }\left( \mathbb{R}_{+};\mathbb{V}^{1}\right) \right) ,$
\begin{equation*}
\int_{0}^{T}\left\langle \partial _{t}\Phi _{n}^{y},\zeta \right\rangle _{%
\mathcal{M}_{\Omega ,\Gamma }^{1}}dy=-\int_{0}^{T}\left\langle \Phi
_{n}^{y},\partial _{t}\zeta \right\rangle _{\mathcal{M}_{\Omega ,\Gamma
}^{1}}dy\;\rightarrow \;-\int_{0}^{T}\left\langle \Phi ^{y},\partial
_{t}\zeta \right\rangle _{\mathcal{M}_{\Omega ,\Gamma }^{1}}dy,
\end{equation*}%
and that $\Phi ^{t}\in C(0,T;W_{\mu _{\Omega }\oplus \mu _{\Gamma }}^{-1,2}(%
\mathbb{R}_{+};\mathbb{V}^{1}))$. We can exploit the second of (\ref%
{eq:weak-convergence-5}) and (\ref{eq:weak-convergence-5bis}) to deduce
\begin{equation}
U_{n}\rightarrow U~\text{strongly in}~L^{2}\left( 0,T;\mathbb{X}^{2}\right) ,
\label{eq:weak-convergence-11}
\end{equation}%
by application of the Agmon-Lions compactness criterion since $\mathbb{V}%
^{1} $ is compactly embedded in $\mathbb{X}^{2}$. This last strong
convergence property is enough to pass to the limit in the nonlinear terms
since $f$, $g\in C^{1}$ (see, e.g., \cite{Gal12-2, Gal&Warma10}). Indeed, on
account of standard arguments (cf. also \cite{CGGM10})\ we have%
\begin{equation}
P_{n}F(U_{n})\rightharpoonup F\left( U\right) ~\text{weakly in}~L^{2}\left(
0,T;\mathbb{X}^{2}\right) .  \label{eq:weak-convergence-12}
\end{equation}%
The convergence properties (\ref{eq:weak-convergence-5})-(\ref%
{eq:weak-convergence-11}) allow us to pass to the limit as $n\rightarrow
\infty $ in equation (\ref{eq:approx-ode-1}) in order to recover (\ref%
{eq:problem-p-2}), using standard density arguments. Indeed, in order to
pass to the limit in the equations for memory, we use (\ref%
{eq:weak-convergence-5biss}) and the following distributional equality%
\begin{align*}
& -\int_{0}^{T}\left\langle \Phi ^{y},\partial _{t}\zeta \right\rangle _{%
\mathcal{M}_{\Omega ,\Gamma }^{1}}dy-\int_{0}^{T}\mu _{\Omega }^{\prime
}\left( s\right) \left\langle \eta ^{y},\zeta \right\rangle _{\mathcal{M}%
_{\Omega }^{1}}dy-\int_{0}^{T}\mu _{\Gamma }^{^{\prime }}\left( s\right)
\left\langle \xi ^{y},\partial _{t}\zeta \right\rangle _{\mathcal{M}_{\Gamma
}^{1}}dy \\
& =\int_{0}^{t}\left\langle \partial _{t}\Phi ^{t}-\mathrm{T}\Phi ^{y},\zeta
\right\rangle _{\mathcal{M}_{\Omega ,\Gamma }^{1}}dy.
\end{align*}%
Thus, we also get the last two equations of (\ref{eq:problem-p-2}) by virtue
of the last of (\ref{eq:weak-convergence-5}).

\underline{Step 5}. (Continuity of the solution) According to the
description for problem \textbf{P}, see (\ref{eq:problem-p-2}), we have%
\begin{equation}
\begin{array}{ll}
\partial _{t}U\in L^{2}\left( 0,T;\left(\mathbb{V}^{1}\right)^{\ast }\right)
\oplus \left( L^{r_{1}^{\prime }}(\Omega \times \left( 0,T\right) )\times
L^{r_{2}^{\prime }}(\Gamma \times \left( 0,T\right) )\right) , &  \\
\partial _{t}\Phi \in L^{2}\left(0,T;W_{\mu _{\Omega }\oplus \mu _{\Gamma
}}^{-1,2}\left( \mathbb{R}_{+};\mathbb{V}^{1}\right)\right). &
\end{array}
\label{eq:weak-bound20}
\end{equation}%
Since the spaces $L^{2}\left( 0,T;(\mathbb{V}^{1})^{\ast }\right) ,$ $%
L^{r_{1}^{\prime }}(\Omega \times \left( 0,T\right) )\times L^{r_{2}^{\prime
}}(\Gamma \times \left( 0,T\right) )$ are the dual of $L^{2}\left( 0,T;%
\mathbb{V}^{1}\right) $ and $L^{r_{1}}(\Omega \times \left( 0,T\right)
)\times L^{r_{2}}(\Gamma \times \left( 0,T\right) )$, respectively,
recalling (\ref{eq:weak-convergence-5}), we can argue exactly as in the
proof of \cite[Proposition 2.5]{Gal12-2} to deduce that $U\in C\left( \left[
0,T\right] ;\mathbb{X}^{2}\right) $. Finally, owing to $U\in $ $L^{2}(0,T;%
\mathbb{V}^{1})$ and Corollary \ref{t:memory-regularity-1}, it follows that $%
\Phi \in C\left( \left[ 0,T\right] ;\mathcal{M}_{\Omega ,\Gamma }^{1}\right)
$. Thus, both $U\left( 0\right) $ and $\Phi \left( 0\right) $ make sense and
the equalities $U\left( 0\right) =U_{0}$ and $\Phi ^{0}=\Phi _{0}$ hold in
the usual sense due to the strong convergence of $U_{0n}\rightarrow U_{0}$
in $\mathbb{X}^{2}$, and $\Phi _{0n}\rightarrow \Phi _{0}$ in $\mathcal{M}%
_{\Omega ,\Gamma }^{1}$, respectively. The proof of the theorem is finished.
\end{proof}

When both the bulk and boundary nonlinearities are dissipative (i.e., (\ref%
{disf1}) holds in place of the balance (\ref{eq:assumption-5})), we also
have the following.

\begin{theorem}
\label{t:weak-solutions-part2}Assume (\ref{miu1})-(\ref{miu3}) and (\ref%
{eq:assumption-4}), (\ref{disf1}) hold. For each $\alpha ,\beta >0$, $\omega
,\nu \in (0,1)$ and $T>0$, and for any $U_{0}=(u_{0},v_{0})^{{\mathrm{tr}}%
}\in \mathbb{X}^{2}$, $\Phi _{0}=(\eta _{0},\xi _{0})^{{\mathrm{tr}}}\in
\mathcal{M}_{\Omega ,\Gamma }^{1},$ there exists at least one (global) weak
solution $\left( U,\Phi \right) \in C(\left[ 0,T\right] ;\mathcal{H}_{\Omega
,\Gamma }^{0,1})$ to problem \textbf{P} in the sense of Definition \ref{weak}%
.
\end{theorem}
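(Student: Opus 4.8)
The plan is to follow verbatim the Galerkin scheme used in the proof of Theorem \ref{t:weak-solutions}: the construction of the eigenfunction basis (Step 2), the boundedness and continuation of the approximate solutions (Step 3), the weak/strong compactness and passage to the limit (Step 4), and the recovery of continuity in time (Step 5) all rely only on the functional framework and on the growth bounds (\ref{eq:assumption-4}), never on the balance condition (\ref{eq:assumption-5}). Consequently the sole place at which the change of hypothesis matters is the derivation of the \emph{a priori} estimate in Step 1, and that is where I would concentrate the argument.

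The key modification is the way the nonlinear product $\langle F(U),U\rangle_{\mathbb{X}^2}$ is bounded from below. In place of the Poincar\'{e}--Young manipulation (\ref{eq:a-priori-estimate-2})--(\ref{eq:a-priori-estimate-4}), I would invoke the dissipativity (\ref{disf1}) directly. Since $f,\widetilde{g}\in C^1(\mathbb{R})$, the maps $s\mapsto f(s)s$ and $s\mapsto\widetilde{g}(s)s$ are bounded on the compact set $\{|s|\leq s_0\}$; combining this with (\ref{disf1}) on $\{|s|\geq s_0\}$ produces global pointwise bounds $f(s)s\geq C_f|s|^{r_1}-c_0$ and $\widetilde{g}(s)s\geq C_g|s|^{r_2}-c_0$, valid for every $s\in\mathbb{R}$ with a suitable constant $c_0>0$. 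Integrating the first over $\Omega$ and the second over $\Gamma$ then yields
\begin{equation*}
\langle F(U),U\rangle_{\mathbb{X}^2}\geq C_f\|u\|_{L^{r_1}(\Omega)}^{r_1}+C_g\|v\|_{L^{r_2}(\Gamma)}^{r_2}-c_0(|\Omega|+|\Gamma|).
\end{equation*}

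Substituting this lower bound into the energy identity (\ref{eq:a-priori-estimate-1tris}) gives, for almost every $t$,
\begin{equation*}
\frac{1}{2}\frac{d}{dt}\left(\|U\|_{\mathbb{X}^2}^2+\|\Phi^t\|_{\mathcal{M}_{\Omega,\Gamma}^1}^2\right)-\langle\mathrm{T}\Phi^t,\Phi^t\rangle_{\mathcal{M}_{\Omega,\Gamma}^1}+\|U\|_{\mathbb{V}^1}^2+C_f\|u\|_{L^{r_1}(\Omega)}^{r_1}+C_g\|v\|_{L^{r_2}(\Gamma)}^{r_2}\leq C.
\end{equation*}
In contrast with the balance case, no fraction of $\|\nabla u\|_{L^2(\Omega)}^2$ is consumed on the right-hand side, so the full norm $\|U\|_{\mathbb{V}^1}^2$ is retained and there is no parameter $\varepsilon\in(0,\omega)$ to track. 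Integrating over $(0,t)$ and using (\ref{eq:operator-T-1}) reproduces the estimate (\ref{eq:a-priori-estimate}) exactly, now with the additional \emph{direct} control $v\in L^{r_2}(\Gamma\times(0,T))$; in particular Lemma \ref{t:appendix-lemma-1} is no longer required to recover (\ref{eq:a-priori-bound-15}).

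With (\ref{eq:a-priori-estimate}) in hand, the uniform bounds (\ref{eq:uniform-bounds-1})--(\ref{eq:uniform-bounds-8}) for the approximate solutions follow as before, the Agmon--Lions compactness giving $U_n\to U$ strongly in $L^2(0,T;\mathbb{X}^2)$ and hence $P_nF(U_n)\rightharpoonup F(U)$ goes through unchanged, and Step 5 yields $(U,\Phi)\in C([0,T];\mathcal{H}_{\Omega,\Gamma}^{0,1})$. I do not anticipate any genuine obstacle: the dissipativity (\ref{disf1}) is a stronger and more convenient hypothesis than (\ref{eq:assumption-5}), so the estimate is in fact cleaner. The only point demanding a little care is the passage from the one-sided inequality (\ref{disf1}), which holds only for $|s|\geq s_0$, to a global pointwise bound, and this is precisely what the continuity of $f$ and $\widetilde{g}$ on the compact set $\{|s|\leq s_0\}$ supplies.
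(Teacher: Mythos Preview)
Your proposal is correct and follows essentially the same approach as the paper: the paper's proof simply states that one replaces (\ref{eq:a-priori-estimate-4bis}) by the pointwise bounds $f(s)s\geq C_f|s|^{r_1}-C_1$ and $\widetilde{g}(s)s\geq C_g|s|^{r_2}-C_2$ coming from (\ref{disf1}), and that the rest of the argument from Theorem \ref{t:weak-solutions} goes through unchanged. Your write-up is in fact more detailed than the paper's, in particular in noting the passage from $|s|\geq s_0$ to all $s$ and in observing that the boundary control $v\in L^{r_2}(\Gamma\times(0,T))$ now comes directly rather than via Lemma \ref{t:appendix-lemma-1}.
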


\begin{proof}
The proof is essentially the same as the proof of Theorem \ref%
{t:weak-solutions} with the exception that one employs the estimate%
\begin{equation*}
f\left( u\right) u\geq C_{f}\left\vert u\right\vert ^{r_{1}}-C_{1},\text{ }%
\widetilde{g}\left( u\right) u\geq C_{g}\left\vert u\right\vert
^{r_{2}}-C_{2},\text{ }\forall s\in \mathbb{R},
\end{equation*}%
in place of (\ref{eq:a-priori-estimate-4bis}), owing to (\ref{disf1}). This
implies the same apriori estimate (\ref{eq:a-priori-estimate}).
\end{proof}

Finally, we also have uniqueness of the weak solution in some cases.

\begin{proposition}
\label{uniqueness}Let $\left( U_{i},\Phi _{i}\right) $ be any two weak
solutions of problem \textbf{P }in the sense of Definition \ref{weak}, for $%
i=1,2.$ Assume (\ref{eq:assumption-2}). Then the following estimate holds:%
\begin{equation}
\left\Vert U_{1}(t)-U_{2}\left( t\right) \right\Vert _{\mathbb{X}%
^{2}}+\left\Vert \Phi _{1}^{t}-\Phi _{2}^{t}\right\Vert _{\mathcal{M}%
_{\Omega ,\Gamma }^{1}}\leq \left( \left\Vert U_{1}(0)-U_{2}\left( 0\right)
\right\Vert _{\mathbb{X}^{2}}+\left\Vert \Phi _{1}^{0}-\Phi
_{2}^{0}\right\Vert _{\mathcal{M}_{\Omega ,\Gamma }^{1}}\right) e^{Ct},
\label{cont-dep}
\end{equation}%
for some constant $C>0$ independent of time, $U_{i}$ and $\Phi _{i}.$
\end{proposition}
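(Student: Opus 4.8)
The plan is to prove the continuous dependence estimate \eqref{cont-dep} by a standard energy method applied to the difference of two solutions. Set $W:=U_{1}-U_{2}=\binom{w}{w_{\mid \Gamma }}$ and $\Psi :=\Phi _{1}-\Phi _{2}=\binom{\zeta}{\zeta_{\mid\Gamma}}$, where $w=u_{1}-u_{2}$. Subtracting the two copies of the variational system \eqref{eq:problem-p-2} (one for each solution) kills all the linear memory and Wentzell terms by linearity and leaves an equation governing $(W,\Psi)$ with the only nonlinearity being $F(U_{1})-F(U_{2})$. The admissible test functions for a weak solution are $\Xi\in\mathbb{V}^{1}\oplus(L^{r_{1}}(\Omega)\times L^{r_{2}}(\Gamma))$ and $\Pi\in\mathcal{M}_{\Omega,\Gamma}^{1}$; I would test the difference of the first equations with $\Xi=W(t)$ and the difference of the memory equations with $\Pi=\Psi^{t}$. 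One must first check that $W$ is an admissible test function — this is where the regularity \eqref{eq:problem-p-1}--\eqref{eq:problem-p-1bbis} and the growth bound \eqref{eq:assumption-4} are used to guarantee $F(U_{1})-F(U_{2})$ pairs with $w$.

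After testing and adding, I expect to arrive at a differential inequality of the form
\begin{equation*}
\frac{1}{2}\frac{d}{dt}\left( \Vert W\Vert _{\mathbb{X}^{2}}^{2}+\Vert \Psi ^{t}\Vert _{\mathcal{M}_{\Omega ,\Gamma }^{1}}^{2}\right) -\left\langle \mathrm{T}\Psi ^{t},\Psi ^{t}\right\rangle _{\mathcal{M}_{\Omega ,\Gamma }^{1}}+\Vert W\Vert _{\mathbb{V}^{1}}^{2}=-\left\langle F(U_{1})-F(U_{2}),W\right\rangle _{\mathbb{X}^{2}}.
\end{equation*}
By \eqref{eq:operator-T-1} the memory term $-\langle\mathrm{T}\Psi^{t},\Psi^{t}\rangle_{\mathcal{M}_{\Omega,\Gamma}^{1}}\geq 0$ and may simply be discarded on the left. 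The decisive structural input is the one-sided Lipschitz (sign) condition \eqref{eq:assumption-2}: writing $f(u_{1})-f(u_{2})=\big(\int_{0}^{1}f'(u_{2}+\theta w)\,d\theta\big)w$ and using $f'\geq -M_{f}$ on $\Omega$, together with $\widetilde{g}'=g'-\nu\beta\geq -M_{g}-\nu\beta$ on $\Gamma$, gives the monotonicity-type bound
\begin{equation*}
\left\langle F(U_{1})-F(U_{2}),W\right\rangle _{\mathbb{X}^{2}}\geq -M_{f}\Vert w\Vert _{L^{2}(\Omega )}^{2}-(M_{g}+\nu\beta)\Vert w\Vert _{L^{2}(\Gamma )}^{2}\geq -C\Vert W\Vert _{\mathbb{X}^{2}}^{2}.
\end{equation*}
The point of the sign condition is precisely that no superlinear growth of the difference survives — only the $L^{2}$ norm of $W$ appears on the right, controlled by $\Vert W\Vert_{\mathbb{X}^{2}}^{2}$, and no use of the (possibly bad) growth exponents $r_{1},r_{2}$ is needed at this stage.

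Dropping the nonnegative $\Vert W\Vert_{\mathbb{V}^{1}}^{2}$ term then yields
\begin{equation*}
\frac{d}{dt}\left( \Vert W\Vert _{\mathbb{X}^{2}}^{2}+\Vert \Psi ^{t}\Vert _{\mathcal{M}_{\Omega ,\Gamma }^{1}}^{2}\right) \leq 2C\left( \Vert W\Vert _{\mathbb{X}^{2}}^{2}+\Vert \Psi ^{t}\Vert _{\mathcal{M}_{\Omega ,\Gamma }^{1}}^{2}\right) ,
\end{equation*}
and Gronwall's lemma gives the asserted exponential bound \eqref{cont-dep}, with $C$ depending only on $M_{f},M_{g},\nu,\beta$ (the exponent in \eqref{cont-dep} being $2C$, which after taking square roots is just renamed). \textbf{The main obstacle} is justifying the formal computation rigorously: the weak solutions only satisfy $\partial_{t}U\in L^{2}(0,T;(\mathbb{V}^{1})^{\ast})\oplus(L^{r_{1}'}\times L^{r_{2}'})$ and $\partial_{t}\Psi$ lives in a negative-order memory space, so the identities $\langle\partial_{t}W,W\rangle=\frac{1}{2}\frac{d}{dt}\Vert W\Vert_{\mathbb{X}^{2}}^{2}$ and $\langle\partial_{t}\Psi^{t},\Psi^{t}\rangle=\frac{1}{2}\frac{d}{dt}\Vert\Psi^{t}\Vert_{\mathcal{M}_{\Omega,\Gamma}^{1}}^{2}$ require a duality/chain-rule argument rather than a literal pairing. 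I would handle this exactly as in the continuity step of the existence proof, invoking the same Lions-type lemma used there (the reference to \cite[Proposition 2.5]{Gal12-2}) to legitimize the time differentiation, and would note that the memory pairing $\langle\partial_{t}\Psi^{t},\Psi^{t}\rangle$ is made sense of via \eqref{eq:memory-1} and Corollary \ref{t:memory-regularity-1}. Once the chain rule is justified, everything else is routine.
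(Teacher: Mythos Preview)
Your proposal is correct and follows essentially the same approach as the paper's proof: subtract the two weak formulations, test the difference with $(W,\Psi^{t})$, use \eqref{eq:operator-T-1} to discard the memory dissipation term, invoke the sign condition \eqref{eq:assumption-2} to bound $\langle F(U_{1})-F(U_{2}),W\rangle_{\mathbb{X}^{2}}$ below by $-C\Vert W\Vert_{\mathbb{X}^{2}}^{2}$, and close with Gronwall. Your discussion of the admissibility of $W$ as a test function and the chain-rule justification is more explicit than the paper's (which simply performs the pairing without comment), but the logical skeleton is identical, and the constant $C=C(M_{f},M_{g},\beta)$ you obtain matches the paper's.
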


\begin{proof}
Set ${\widetilde{U}}=U_{1}-U_{2}$, ${\widetilde{\Phi }}=\Phi _{1}-\Phi _{2}$%
. The function $({\widetilde{U}},{\widetilde{\Phi }})$ satisfies the
equations:%
\begin{align}
& \left\langle \partial _{t}\widetilde{U}(t),V\right\rangle _{\mathbb{X}%
^{2}}+\left\langle \mathrm{A_{W}^{0,\beta ,\nu ,\omega }}\widetilde{U}%
(t),V\right\rangle _{\mathbb{X}^{2}}+\left\langle
F(U_{1})-F(U_{2}),V\right\rangle _{\mathbb{X}^{2}}  \label{eq:problem-p-7} \\
& +\int_{0}^{\infty }\mu _{\Omega }\left( s\right) \left\langle \mathrm{%
A_{W}^{\alpha ,0,0,\omega }}\widetilde{\Phi }^{t}\left( s\right)
,V\right\rangle _{\mathbb{X}^{2}}ds+\nu \int_{0}^{\infty }\mu _{\Gamma
}(s)\left\langle \mathrm{C}\widetilde{\xi }^{t}\left( s\right)
,v\right\rangle _{L^{2}\left( \Gamma \right) }ds  \notag \\
& =0  \notag
\end{align}%
and%
\begin{equation}
\left\langle \partial _{t}\widetilde{\Phi }^{t}\left( s\right) -\mathrm{T}%
\widetilde{\Phi }^{t}(s)-\widetilde{U}\left( t\right) ,\Pi \right\rangle _{%
\mathcal{M}_{\Omega ,\Gamma }^{1}}=0,  \label{eq:problem-p-8}
\end{equation}%
for all $\left( V,\Pi \right) \in \left( \mathbb{V}^{1}\oplus \left(
L^{r_{1}}(\Omega )\times L^{r_{2}}(\Gamma )\right) \right) \times \mathcal{M}%
_{\Omega ,\Gamma }^{1}$, subject to the associated initial conditions%
\begin{equation*}
\widetilde{U}(0)=U_{1}\left( 0\right) -U_{2}\left( 0\right) ~\text{and}~%
\widetilde{\Phi }^{0}=\Phi _{1}^{0}-\Phi _{2}^{0}.
\end{equation*}%
Multiplication of (\ref{eq:problem-p-7}) by $V=\widetilde{U}(t)$ in $\mathbb{%
X}^{2}$ and multiplication of (\ref{eq:problem-p-8}) by $\Pi =\widetilde{%
\Phi }^{t}$ in $\mathcal{M}_{\Omega ,\Gamma }^{1}$, followed by summing the
resulting identities, leads us to the differential inequality
\begin{align}
& \frac{d}{dt}\left( \left\Vert U_{1}-U_{2}\right\Vert _{\mathbb{X}%
^{2}}^{2}+\left\Vert \Phi _{1}-\Phi _{2}\right\Vert _{\mathcal{M}_{\Omega
,\Gamma }^{1}}^{2}\right)  \label{eq:part-uniqueness-1} \\
& \leq -2\left\langle F(U_{1})-F(U_{2}),\widetilde{U}\right\rangle _{\mathbb{%
X}^{2}}  \notag \\
& =-2\left\langle f(u_{1})-f(u_{2}),u_{1}-u_{2}\right\rangle _{L^{2}\left(
\Omega \right) }-2\left\langle \widetilde{g}(u_{1})-\widetilde{g}%
(u_{2}),u_{1}-u_{2}\right\rangle _{L^{2}\left( \Gamma \right) }.  \notag
\end{align}%
Employing assumption (\ref{eq:assumption-2}) on the nonlinear terms, we
easily find that%
\begin{equation}
\frac{d}{dt}\left( \left\Vert U_{1}-U_{2}\right\Vert _{\mathbb{X}%
^{2}}^{2}+\left\Vert \Phi _{1}-\Phi _{2}\right\Vert _{\mathcal{M}_{\Omega
,\Gamma }^{1}}^{2}\right) \leq C\left\Vert U_{1}-U_{2}\right\Vert _{\mathbb{X%
}^{2}}^{2},  \label{eq:part-uniqueness-2}
\end{equation}%
for some $C=C\left( M_{f},M_{g},\beta \right) >0$. Application of the
standard Gronwall lemma to (\ref{eq:part-uniqueness-2}) yields the desired
claim (\ref{cont-dep}).
\end{proof}

In the final part of this section, we turn our attention to the existence of
global strong solutions for problem \textbf{P}. First, assuming that the
interior and boundary share the same memory kernel, we can derive the
existence of strong solutions in the case when the bulk and boundary
nonlinearities have supercritical polynomial growth of order at most $7/2$.
Let $\overline{f}$, $\overline{g}$ denote the primitives of $f$ and $%
\widetilde{g}$, respectively, such that $\overline{f}\left( 0\right) =%
\overline{g}\left( 0\right) =0.$

\begin{theorem}
\label{quasi-strong}Let (\ref{miu1})-(\ref{miu3}) be satisfied for $\mu
_{\Omega }\equiv \mu _{\Gamma }$, and assume that $f,$ $g\in C^{1}\left(
\mathbb{R}\right) $ satisfy the following assumptions:

(i) $|f^{^{\prime }}\left( s\right) |$ $\leq \ell _{1}\left( 1+\left\vert
s\right\vert ^{r_{1}}\right) ,$ for all $s\in \mathbb{R}$, for some
(arbitrary) $1\leq r_{1}<\frac{5}{2}.$

(ii) $|g^{^{\prime }}\left( s\right) |$ $\leq \ell _{2}(1+|s|^{r_{2}}),$ for
all $s\in \mathbb{R}$, for some (arbitrary) $1\leq r_{2}<\frac{5}{2}.$

(iii) (\ref{eq:assumption-2}) holds and there exist constants $C_{i}>0,$ $%
i=1,...,4,$ such that%
\begin{equation}
f\left( s\right) s\geq -C_{1}\left\vert s\right\vert ^{2}-C_{2},\text{ }%
g\left( s\right) s\geq -C_{3}\left\vert s\right\vert ^{2}-C_{4},\text{ }%
\forall s\in \mathbb{R}\text{.}  \label{dissip-a1}
\end{equation}

\noindent Given $\alpha ,\beta >0$, $\omega ,\nu \in (0,1)$, $\left(
U_{0},\Phi _{0}\right) \in \mathcal{H}_{\Omega ,\Gamma }^{1,2}$, there
exists a unique global strong solution $\left( U,\Phi \right) $ to problem
\textbf{P} in the sense of Definition \ref{weak2}.
\end{theorem}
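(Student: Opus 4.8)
The plan is to build the strong solution by the same Faedo--Galerkin scheme
constructed in the proof of Theorem~\ref{t:weak-solutions}, using the special
basis $\{\zeta_i=\varkappa_i\Psi_i^{\alpha,\beta,\nu,\omega}\}$ from Step~2 of
that proof. Because the initial datum now lies in the smoother space
$\mathcal{H}_{\Omega,\Gamma}^{1,2}=\mathbb{V}^1\times\mathcal{M}_{\Omega,\Gamma}^2$,
I expect the approximate solutions $(U_n,\Phi_n)$ to inherit, in addition to the
basic energy bound~\eqref{eq:a-priori-estimate}, a higher-order energy estimate
obtained by testing the discretized system
\eqref{eq:approx-ode-1}--\eqref{eq:approx-ode-2} against
$\partial_t U_n$ and against $\mathrm{T}\Phi_n$ (equivalently, differentiating the
memory equation in $s$ and pairing in $\mathcal{M}_{\Omega,\Gamma}^1$). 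This is the
crucial structural point where the hypothesis $\mu_\Omega\equiv\mu_\Gamma$ enters:
with a single kernel the memory variable $\Phi_n$ lives in one scale of weighted
spaces, the Wentzell operator $\mathrm{A_W^{0,\beta,\nu,\omega}}$ acts
simultaneously in bulk and boundary with a common coefficient structure, and the
cross term $\langle \Phi_n^t,\partial_t U_n\rangle_{\mathcal{M}^1}$ can be matched
against $\langle \mathrm{T}\Phi_n,\Phi_n\rangle$ using
\eqref{eq:operator-T-1} and the representation
formula~\eqref{eq:representation-formula-1}, so that the memory contributions
close rather than leak across two incompatible norms.

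The second step is to control the nonlinear terms at this higher regularity level.
Testing with $\partial_t U_n$ produces the terms
$\langle f(u_n),\partial_t u_n\rangle_{L^2(\Omega)}$ and
$\langle \widetilde g(v_n),\partial_t v_n\rangle_{L^2(\Gamma)}$, which I would
rewrite as total time-derivatives $\frac{d}{dt}\int_\Omega \overline f(u_n)\,dx$
and $\frac{d}{dt}\int_\Gamma \overline g(v_n)\,d\sigma$ using the primitives
$\overline f,\overline g$ introduced just before the statement. The growth
restrictions $r_1,r_2<5/2$ in (i)--(ii) are exactly what make these primitive
integrals finite and controllable: via Remark~\ref{qw}, the embedding
$\mathbb{V}^1\cap\mathbb{V}^2\hookrightarrow L^\infty(0,T;L^6(\Omega))$ together
with the trace estimate in Lemma~\ref{t:appendix-lemma-1} lets me bound
$\|\overline f(u_n)\|_{L^1}$ and the boundary analogue, and the lower bounds
\eqref{dissip-a1} keep these primitives bounded below. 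The sign
conditions~\eqref{eq:assumption-2} then handle the remaining
$\langle f'(u_n)\nabla u_n,\nabla u_n\rangle$-type contributions with a Gronwall
argument. The net result should be a uniform bound
\begin{equation*}
\|U_n\|_{L^\infty(0,T;\mathbb{V}^1)}+\|U_n\|_{L^2(0,T;\mathbb{V}^2)}
+\|\partial_t U_n\|_{L^2(0,T;\mathbb{X}^2)}
+\|\Phi_n\|_{L^\infty(0,T;\mathcal{M}_{\Omega,\Gamma}^2)}\le C,
\end{equation*}
where the elliptic regularity of Lemma~\ref{t:appendix-lemma-3}, through the
isomorphism~\eqref{regularity-oper} identifying
$D(\mathrm{A_W^{\alpha,\beta,\nu,\omega}})=\mathbb{V}^2$, is what upgrades the
bound on $\mathrm{A_W}U_n$ in $L^2(0,T;\mathbb{X}^2)$ to the $\mathbb{V}^2$ bound.

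The third step is passage to the limit and uniqueness. Having the uniform bounds,
I would extract weakly/weakly-$*$ convergent subsequences as in Step~4 of
Theorem~\ref{t:weak-solutions}, now at the higher level, and use the
Aubin--Lions--Simon compactness lemma with
$\mathbb{V}^2\hookrightarrow\hookrightarrow\mathbb{V}^1$ and
$\partial_t U_n$ bounded in $L^2(0,T;\mathbb{X}^2)$ to get strong convergence of
$U_n$ in $L^2(0,T;\mathbb{V}^1)$, which suffices to identify the limits of
$f(u_n),\widetilde g(v_n)$ under the $C^1$ growth hypotheses. The limit pair
$(U,\Phi)$ then satisfies \eqref{eq:problem-p-2} in the strong sense required by
Definition~\ref{weak2}, and the regularity~\eqref{eq:problem-p-1} at the
$\mathcal{H}^{1,2}$ level follows from lower semicontinuity of the norms. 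Uniqueness is
immediate from Proposition~\ref{uniqueness}, whose hypothesis~\eqref{eq:assumption-2}
is assumed in (iii). \textbf{The main obstacle} I anticipate is closing the
higher-order estimate without losing control of the memory term: the pairing
$\langle\mathrm{A_W^{\alpha,0,0,\omega}}\Phi_n^t(s),\partial_t U_n\rangle$ integrated
against $\mu_\Omega$ must be absorbed, and this is precisely where the single-kernel
assumption $\mu_\Omega\equiv\mu_\Gamma$ is indispensable, since it allows the
boundary memory term $\nu\int\mu_\Gamma\langle\mathrm{C}\xi_n^t,\cdot\rangle$ and the
bulk memory term to be combined into a single dissipative quantity tied to
$\langle\mathrm{T}\Phi_n,\Phi_n\rangle_{\mathcal{M}^2}\le 0$; with two distinct kernels
the two pieces would live in mismatched weighted spaces and the estimate would not
close.
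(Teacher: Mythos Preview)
Your plan tests \eqref{eq:approx-ode-1} with $\partial_t U_n$ and handles the nonlinearities via the primitives $\overline f,\overline g$; but this is not how the paper closes the estimate, and it would not reach the range $r_1,r_2<\tfrac52$. The paper instead tests \eqref{eq:approx-ode-1} with $Z_n:=\mathrm{A_{W}^{\alpha,\beta,\nu,\omega}}U_n$ and \eqref{eq:approx-ode-2} with $\Xi_n^t:=\mathrm{A_{W}^{\alpha,\beta,\nu,\omega}}\Phi_n^t$. This choice produces $\|Z_n\|_{\mathbb X^2}^2\sim\|U_n\|_{\mathbb V^2}^2$ and $\tfrac{d}{dt}\|\Xi_n^t\|_{L^2_{\mu}(\mathbb R_+;\mathbb X^2)}^2\sim\tfrac{d}{dt}\|\Phi_n^t\|_{\mathcal M^2_{\Omega,\Gamma}}^2$ directly, and the hypothesis $\mu_\Omega\equiv\mu_\Gamma$ is used for the \emph{algebraic cancellation}
\[
\int_0^\infty\!\mu_\Omega(s)\langle \mathrm{A_{W}^{\alpha,0,0,\omega}}\Phi_n^t(s),Z_n\rangle_{\mathbb X^2}ds+\nu\!\int_0^\infty\!\mu_\Gamma(s)\langle \mathrm C\xi_n^t(s),z_n\rangle_{L^2(\Gamma)}ds
=\langle U_n,\Xi_n^t\rangle_{\mathcal M^1_{\Omega,\Gamma}},
\]
so that the two memory cross terms match exactly; your description of matching against $\langle \mathrm T\Phi_n,\Phi_n\rangle_{\mathcal M^1}$ is not the actual mechanism. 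With $Z_n$ as test function, the nonlinear pairing $\langle F(U_n),Z_n\rangle_{\mathbb X^2}$ is integrated by parts and yields the $\langle f'(u_n)|\nabla u_n|^2\rangle$ terms you mention (which indeed come from this test, \emph{not} from $\partial_t U_n$) together with the genuinely critical \emph{boundary} term $\omega\int_\Gamma(\widetilde g(u_n)-f(u_n))\partial_n u_n\,d\sigma$. It is this trace term, estimated via $\|\partial_n u_n\|_{H^{1/2}(\Gamma)}\le C\|u_n\|_{H^2(\Omega)}$ and a dual/interpolation bound on $\|u_n^{r+1}\|_{H^{-1/2}(\Gamma)}$, that forces $r<\tfrac52$.

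By contrast, testing with $\partial_t U_n$ turns $\langle F(U_n),\partial_t U_n\rangle$ into $\tfrac{d}{dt}\int\overline F(U_n)$, and controlling $\int_\Omega\overline f(u_n)$ with $u_n\in H^1(\Omega)\hookrightarrow L^6(\Omega)$ only requires $r_1\le 4$, so $r_1<\tfrac52$ is not the natural threshold there; moreover this route gives you $\partial_t U_n\in L^2(0,T;\mathbb X^2)$ and $U_n\in L^\infty(0,T;\mathbb V^1)$ but neither $U_n\in L^2(0,T;\mathbb V^2)$ nor $\Phi_n\in L^\infty(0,T;\mathcal M^2_{\Omega,\Gamma})$ directly. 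Recovering those by comparison in the equation would require $F(U_n)\in L^2(0,T;\mathbb X^2)$, which from $u_n\in L^\infty(0,T;L^6(\Omega))$ alone forces $r_1\le 2$, and independently a bound on $\Phi_n$ in $\mathcal M^2_{\Omega,\Gamma}$ that you do not yet have. So your scheme, as written, closes only in the subrange $r_1\le 2$ and does not deliver Definition~\ref{weak2} in the full range claimed.
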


\begin{proof}
{\underline{Step 1}} (The existence argument). By Remark \ref{qw} it
suffices to deduce additional regularity for $\left( U,\Phi \right) $. In
order to get the crucial estimate we rely once again on various dissipative
estimates. First, we notice that using the condition of (\ref{dissip-a1}),
we obtain%
\begin{equation*}
\left\langle F\left( U_{n}\right) ,U_{n}\right\rangle _{\mathbb{X}^{2}}\geq
-C_{F}\left( \left\Vert U_{n}\right\Vert _{\mathbb{X}^{2}}^{2}+1\right) ,
\end{equation*}%
for some $C_{F}>0$. Thus, arguing in the same fashion as in getting (\ref%
{eq:a-priori-estimate-1tris}), in view of Gronwall's lemma we obtain%
\begin{align}
& \Vert U_{n}(t)\Vert _{\mathbb{X}^{2}}^{2}+\left\Vert \Phi
_{n}^{t}\right\Vert _{\mathcal{M}_{\Omega ,\Gamma }^{1}}^{2}-2\left\langle
\mathrm{T}\Phi _{n}^{t},\Phi _{n}^{t}\right\rangle _{\mathcal{M}_{\Omega
,\Gamma }^{1}}+C\int_{0}^{t}\Vert U_{n}(\tau )\Vert _{\mathbb{V}%
^{1}}^{2}d\tau  \label{qest1} \\
& \leq C_{T}\left( 1+\Vert U(0)\Vert _{\mathbb{X}^{2}}^{2}+\left\Vert \Phi
^{0}\right\Vert _{\mathcal{M}_{\Omega ,\Gamma }^{1}}^{2}\right) ,  \notag
\end{align}%
where $C_{T}\sim e^{CT},$ for some $C>0$ which is independent of $T,$ $n,$ $%
t.$

Next, we derive an estimate for $U_{n}\in L^{\infty }(0,T;\mathbb{V}^{1})$
and $\Phi _{n}\in L^{\infty }(0,T;\mathcal{M}_{\Omega ,\Gamma }^{2})$. We
use again the scheme (\ref{eq:approx-ode-1})-(\ref{eq:approx-ode-3}) in
which we test equation (\ref{eq:approx-ode-1}) with the function%
\begin{equation*}
\overline{U}=Z_{n}:=\binom{z_{n}}{z_{n\mid \Gamma }},\text{ }%
z_{n}:=\sum_{i=1}^{n}a_{i}(t)\lambda _{i}\theta _{i}^{\alpha ,\beta ,\nu
,\omega }\in C^{2}\left( \left( 0,T\right) \times \overline{\Omega }\right) .
\end{equation*}%
We get%
\begin{equation}
\left\langle \partial _{t}U_{n},{Z}_{n}\right\rangle _{\mathbb{X}%
^{2}}+\left\langle \mathrm{A_{W}^{0,\beta ,\nu ,\omega }}U_{n},{Z}%
_{n}\right\rangle _{\mathbb{X}^{2}}+\left\langle \Phi _{n}^{t}\left(
s\right) ,Z_{n}\right\rangle _{\mathcal{M}_{\Omega ,\Gamma
}^{1}}+\left\langle F(U_{n}),{Z}_{n}\right\rangle _{\mathbb{X}^{2}}=0.
\label{qest3}
\end{equation}%
Moreover, testing (\ref{eq:approx-ode-2}) with%
\begin{equation*}
\overline{\Phi }=\Xi _{n}^{t}:=\binom{\varphi _{n}^{t}}{\varphi _{n\mid
\Gamma }^{t}},\text{ }\varphi _{n}^{t}:=\sum_{i=1}^{n}b_{i}(t)\varkappa
_{i}\left( s\right) \lambda _{i}\theta _{i}^{\alpha ,\beta ,\nu ,\omega
}=\sum_{i=1}^{n}b_{i}(t)\lambda _{i}\zeta _{i}\left( s\right)
\end{equation*}%
we find
\begin{equation}
\left\langle \partial _{t}\Phi _{n}^{t},\Xi _{n}^{t}\right\rangle _{\mathcal{%
M}_{\Omega ,\Gamma }^{1}}=\left\langle \mathrm{T}\Phi _{n}^{t},\Xi
_{n}^{t}\right\rangle _{\mathcal{M}_{\Omega ,\Gamma }^{1}}+\left\langle
U_{n},\Xi _{n}^{t}\right\rangle _{\mathcal{M}_{\Omega ,\Gamma }^{1}}.
\label{qest4}
\end{equation}%
Indeed, $\left( Z_{n},\Xi _{n}^{t}\right) \in X_{n}\times M_{n}$ is
admissible as a test function in (\ref{eq:approx-ode-1})-(\ref%
{eq:approx-ode-2}). Recalling (\ref{eq:approximate-1}), we further notice
that $Z_{n}=\mathrm{A_{W}^{\alpha ,\beta ,\nu ,\omega }}U_{n}$ and $\Xi
_{n}^{t}=\mathrm{A_{W}^{\alpha ,\beta ,\nu ,\omega }}\Phi _{n}^{t},$
respectively, due to the fact that the eigenpair $(\lambda _{i},\theta
_{i}^{\alpha ,\beta ,\nu ,\omega })$ solves (\ref{eigen-p}). Owing to these
identities and (\ref{identity*}), we have%
\begin{align}
\left\langle \Phi _{n}^{t}\left( s\right) ,Z_{n}\right\rangle _{\mathcal{M}%
_{\Omega ,\Gamma }^{1}}& =\int_{0}^{\infty }\mu _{\Omega }(s)\left\langle
\mathrm{A_{W}^{\alpha ,0,0,\omega }}\Phi _{n}^{t}\left( s\right)
,Z_{n}\right\rangle _{\mathbb{X}^{2}}ds+\nu \int_{0}^{\infty }\mu _{\Gamma
}(s)\left\langle \mathrm{C}\xi _{n}^{t}\left( s\right) ,z_{n}\right\rangle
_{L^{2}\left( \Gamma \right) }ds  \label{qest2} \\
& \overset{\mu _{\Omega }\equiv \mu _{\Gamma }}{=}\int_{0}^{\infty }\mu
_{\Omega }(s)\left\langle \mathrm{A_{W}^{\alpha ,\beta ,\nu ,\omega }}\Phi
_{n}^{t}\left( s\right) ,\mathrm{A_{W}^{\alpha ,\beta ,\nu ,\omega }}%
U_{n}\right\rangle _{\mathbb{X}^{2}}ds  \notag \\
& =\left\langle U_{n},\Xi _{n}^{t}\right\rangle _{\mathcal{M}_{\Omega
,\Gamma }^{1}}.  \notag
\end{align}%
Adding relations (\ref{qest3})-(\ref{qest4}) together, and using (\ref{qest2}%
) we further deduce%
\begin{align}
& \frac{1}{2}\frac{d}{dt}\left( \left\Vert U_{n}\right\Vert _{\mathbb{V}%
^{1}}^{2}+\left\Vert \Xi _{n}^{t}\right\Vert _{L_{\mu _{\Omega }}^{2}(%
\mathbb{R}_{+};\mathbb{X}^{2})}^{2}\right) -\left\langle \mathrm{T}\Phi
_{n}^{t},\Xi _{n}^{t}\right\rangle _{\mathcal{M}_{\Omega ,\Gamma
}^{1}}+\left\Vert Z_{n}\right\Vert _{\mathbb{X}^{2}}^{2}  \label{qest5} \\
& =\alpha \omega \left\langle u_{n},z_{n}\right\rangle _{L^{2}\left( \Omega
\right) }-\left\langle F(U_{n}),{Z}_{n}\right\rangle _{\mathbb{X}^{2}},
\notag
\end{align}%
and%
\begin{equation}
\left\langle \mathrm{T}\Phi _{n}^{t},\Xi _{n}^{t}\right\rangle _{\mathcal{M}%
_{\Omega ,\Gamma }^{1}}=\int_{0}^{\infty }\mu _{\Omega }(s)\left\langle
\mathrm{A_{W}^{\alpha ,\beta ,\nu ,\omega }T}\Phi _{n}^{t},\Xi
_{n}^{t}\right\rangle _{\mathbb{X}^{2}}ds=\frac{1}{2}\int_{0}^{\infty }\mu
_{\Omega }^{^{\prime }}(s)\left\Vert \Xi _{n}^{t}\left( s\right) \right\Vert
_{\mathbb{X}^{2}}^{2}ds,  \label{qest2bis}
\end{equation}%
thanks to the fact that $\mu _{\Omega }\equiv \mu _{\Gamma }$. We begin
estimating both terms on the right-hand side of (\ref{qest5}). The first one
is easy,%
\begin{equation}
\alpha \omega \left\langle u_{n},z_{n}\right\rangle _{L^{2}\left( \Omega
\right) }\leq \delta \left\Vert z_{n}\right\Vert _{L^{2}\left( \Omega
\right) }^{2}+C_{\delta }\left\Vert u_{n}\right\Vert _{L^{2}\left( \Omega
\right) }^{2},  \label{qest2tris}
\end{equation}%
for any $\delta \in (0,1]$. To bound the last term we integrate by parts in
the following way:%
\begin{align}
\left\langle F(U_{n}),{Z}_{n}\right\rangle _{\mathbb{X}^{2}}& =\int_{\Omega
}f\left( u_{n}\right) \left( -\omega \Delta u_{n}+\alpha \omega u_{n}\right)
dx+\int_{\Gamma }\widetilde{g}\left( u_{n}\right) \left( -\nu \Delta
_{\Gamma }u_{n}+\omega \partial _{n}u_{n}+\nu \beta u_{n}\right) d\sigma
\label{qest5bis} \\
& =\omega \int_{\Omega }f^{^{\prime }}\left( u_{n}\right) \left\vert \nabla
u_{n}\right\vert ^{2}dx+\nu \int_{\Gamma }\widetilde{g}^{^{\prime }}\left(
u_{n}\right) \left\vert \nabla _{\Gamma }u_{n}\right\vert ^{2}d\sigma  \notag
\\
& +\alpha \omega \int_{\Omega }f\left( u_{n}\right) u_{n}dx+\nu \beta
\int_{\Gamma }\widetilde{g}\left( u_{n}\right) u_{n}d\sigma  \notag \\
& +\omega \int_{\Gamma }\left( \widetilde{g}\left( u_{n}\right) -f\left(
u_{n}\right) \right) \partial _{n}u_{n}d\sigma .  \notag
\end{align}%
By assumptions (\ref{eq:assumption-2}) and (\ref{dissip-a1}), we can easily
find a positive constant $C$ independent of $t,T$ and $n$ such that%
\begin{equation}
\omega \int_{\Omega }f^{^{\prime }}\left( u_{n}\right) \left\vert \nabla
u_{n}\right\vert ^{2}dx+\nu \int_{\Gamma }\widetilde{g}^{^{\prime }}\left(
u_{n}\right) \left\vert \nabla _{\Gamma }u_{n}\right\vert ^{2}d\sigma \geq
-M_{f}\omega \left\Vert \nabla u_{n}\right\Vert _{L^{2}\left( \Omega \right)
}^{2}-M_{g}\nu \left\Vert \nabla _{\Gamma }u_{n}\right\Vert _{L^{2}\left(
\Gamma \right) }^{2}  \label{qest5tris}
\end{equation}%
and%
\begin{equation}
\alpha \omega \int_{\Omega }f\left( u_{n}\right) u_{n}dx+\nu \beta
\int_{\Gamma }\widetilde{g}\left( u_{n}\right) u_{n}d\sigma \geq -C\left(
\left\Vert U_{n}\right\Vert _{\mathbb{X}^{2}}^{2}+1\right) .
\label{qest5quad}
\end{equation}%
In order to estimate the last boundary integral on the right-hand side of (%
\ref{qest5bis}), we observe that due to assumptions (i)-(ii) it suffices to
estimate boundary integrals of the form%
\begin{equation*}
I:=\int_{\Gamma }u_{n}^{r+1}\partial _{n}u_{n}d\sigma ,\text{ for some }%
r<5/2.
\end{equation*}%
Indeed, due to classical trace regularity and embedding results, for every $%
\delta \in (0,1]$ we have%
\begin{equation}
I\leq \left\Vert \partial _{n}u_{n}\right\Vert _{H^{1/2}\left( \Gamma
\right) }\left\Vert u_{n}^{r+1}\right\Vert _{H^{-1/2}\left( \Gamma \right)
}\leq \delta \left\Vert u_{n}\right\Vert _{H^{2}\left( \Omega \right)
}^{2}+C_{\delta }\left\Vert u_{n}^{r+1}\right\Vert _{H^{-1/2}\left( \Gamma
\right) }^{2}.  \label{qest5q}
\end{equation}%
It remains to estimate the last term in (\ref{qest5q}). To this end, we
employ the basic Sobolev embeddings $H^{1/2}\left( \Gamma \right) \subset
L^{4}\left( \Gamma \right) $ and $H^{1}\left( \Gamma \right) \subset
L^{s}\left( \Gamma \right) ,$ for any $s\in (\frac{4}{3},\infty )$,
respectively. Owing to elementary Holder inequalities, we deduce that%
\begin{align}
\left\Vert u_{n}^{r+1}\right\Vert _{H^{-1/2}\left( \Gamma \right) }^{2}&
=\sup_{\psi \in H^{1/2}\left( \Gamma \right) :\left\Vert \psi \right\Vert
_{H^{1/2}\left( \Gamma \right) }=1}\left\vert \left\langle u_{n}^{r+1},\psi
\right\rangle \right\vert ^{2}  \label{qest5qqq} \\
& \leq \left\Vert u_{n}\right\Vert _{L^{s}\left( \Gamma \right)
}^{2}\left\Vert u_{n}\right\Vert _{L^{\overline{s}r}\left( \Gamma \right)
}^{2r}  \notag \\
& \leq C\left\Vert u_{n}\right\Vert _{H^{1}\left( \Gamma \right)
}^{2}\left\Vert u_{n}\right\Vert _{L^{\overline{s}r}\left( \Gamma \right)
}^{2r},  \notag
\end{align}%
for some positive constant $C$ independent of $u,n,t,T$, for sufficiently
large $s\in (\frac{4}{3},\infty )$, where $\overline{s}:=4s/\left(
3s-4\right) >4/3$. Exploiting now the interpolation inequality%
\begin{equation*}
\left\Vert u\right\Vert _{L^{\overline{s}r}\left( \Gamma \right) }\leq
C\left\Vert u\right\Vert _{H^{2}\left( \Gamma \right) }^{1/\left( 2r\right)
}\left\Vert u\right\Vert _{L^{2}\left( \Gamma \right) }^{1-1/\left(
2r\right) },
\end{equation*}%
provided that $r=1+2/\overline{s}<5/2$, we further infer from (\ref{qest5qqq}%
) that%
\begin{align}
\left\Vert u_{n}^{r+1}\right\Vert _{H^{-1/2}\left( \Gamma \right) }^{2}&
\leq C\left\Vert u_{n}\right\Vert _{H^{1}\left( \Gamma \right)
}^{2}\left\Vert u_{n}\right\Vert _{H^{2}\left( \Gamma \right) }\left\Vert
u_{n}\right\Vert _{L^{2}\left( \Gamma \right) }^{2r-1}  \label{qest5qq} \\
& \leq \eta \left\Vert u_{n}\right\Vert _{H^{2}\left( \Gamma \right)
}^{2}+C_{\eta }\left\Vert u_{n}\right\Vert _{H^{1}\left( \Gamma \right)
}^{2}\left( \left\Vert u_{n}\right\Vert _{H^{1}\left( \Gamma \right)
}^{2}\left\Vert u_{n}\right\Vert _{L^{2}\left( \Gamma \right) }^{2\left(
2r-1\right) }\right) ,  \notag
\end{align}%
for any $\eta \in (0,1]$. Inserting (\ref{qest5qq}) into (\ref{qest5q}) and
choosing a sufficiently small $\eta =\delta /C_{\delta }$, by virtue of (\ref%
{regularity-oper}), we easily deduce%
\begin{equation}
I\leq \delta \left\Vert Z_{n}\right\Vert _{\mathbb{X}^{2}}^{2}+C_{\delta
}\left\Vert u_{n}\right\Vert _{H^{1}\left( \Gamma \right) }^{2}\left(
\left\Vert u_{n}\right\Vert _{H^{1}\left( \Gamma \right) }^{2}\left\Vert
u_{n}\right\Vert _{L^{2}\left( \Gamma \right) }^{2\left( 2r-1\right)
}\right) .  \label{qest5last}
\end{equation}%
Thus, setting
\begin{align*}
\Xi \left( t\right) & :=\left\Vert U_{n}\left( t\right) \right\Vert _{%
\mathbb{V}^{1}}^{2}+\left\Vert \Xi _{n}^{t}\right\Vert _{L_{\mu _{\Omega
}}^{2}(\mathbb{R}_{+};\mathbb{X}^{2})}^{2},\text{ } \\
\Lambda \left( t\right) & :=C_{\delta }\left( 1+\left\Vert u_{n}\right\Vert
_{H^{1}\left( \Gamma \right) }^{2}\left\Vert u_{n}\right\Vert _{L^{2}\left(
\Gamma \right) }^{2\left( 2r-1\right) }\right) ,
\end{align*}%
it follows from (\ref{qest5}), (\ref{qest2tris})-(\ref{qest5quad}) and (\ref%
{qest5last}) that%
\begin{equation}
\frac{d}{dt}\Xi \left( t\right) -2\left\langle \mathrm{T}\Phi _{n}^{t},\Xi
_{n}^{t}\right\rangle _{\mathcal{M}_{\Omega ,\Gamma }^{1}}+\left( 2-\delta
\right) \left\Vert Z_{n}\right\Vert _{\mathbb{X}^{2}}^{2}\leq \Xi \left(
t\right) \Lambda \left( t\right) ,  \label{qest3quad}
\end{equation}%
for a sufficiently small $\delta \in (0,1]$. Gronwall's inequality together
with (\ref{qest1}) yields%
\begin{align}
& \left\Vert U_{n}\left( t\right) \right\Vert _{\mathbb{V}%
^{1}}^{2}+\left\Vert \Xi _{n}^{t}\right\Vert _{L_{\mu _{\Omega }}^{2}(%
\mathbb{R}_{+};\mathbb{X}^{2})}^{2}+\int_{0}^{t}\left( \Vert Z_{n}(\tau
)\Vert _{\mathbb{X}^{2}}^{2}-2\left\langle \mathrm{T}\Phi _{n}^{\tau },\Xi
_{n}^{\tau }\right\rangle _{\mathcal{M}_{\Omega ,\Gamma }^{1}}\right) d\tau
\label{qest6} \\
& \leq C_{T}\left( \left\Vert U\left( 0\right) \right\Vert _{\mathbb{V}%
^{1}}^{2}+\left\Vert \Xi ^{0}\right\Vert _{L_{\mu _{\Omega }}^{2}(\mathbb{R}%
_{+};\mathbb{X}^{2})}^{2}\right) ,  \notag
\end{align}%
owing to the boundedness of the (orthogonal) projectors $P_{n}:\mathbb{X}%
^{2}\rightarrow X_{n}$ and $Q_{n}:\mathcal{M}_{\Omega ,\Gamma
}^{1}\rightarrow M_{n}$, and the fact that $\Lambda \in L^{1}\left(
0,T\right) ,$ for any $T>0.$

From (\ref{qest6}), recalling (\ref{regularity-oper}) we obtain the
following uniform (in $n$) bounds for each approximate solution $(U_{n},\Phi
_{n})$:%
\begin{align}
U_{n}& ~\text{is uniformly bounded in}~L^{\infty }\left( 0,T;\mathbb{V}%
^{1}\right) ,  \label{qest7-1} \\
U_{n}& ~\text{is uniformly bounded in}~L^{2}\left( 0,T;\mathbb{V}^{2}\right)
,  \label{qest7-2} \\
\Phi _{n}& ~\text{is uniformly bounded in}~L^{\infty }\left( 0,T;\mathcal{M}%
_{\Omega ,\Gamma }^{2}\right) ,  \label{qest7-3} \\
\Phi _{n}& ~\text{is uniformly bounded in}~L^{2}\left( 0,T;L_{k_{\Omega
}}^{2}\left( \mathbb{R}_{+};\mathbb{V}^{2}\right) \right) .  \label{qest7-4}
\end{align}%
Observe now that by (\ref{eq:approx-ode-1})-(\ref{eq:approx-ode-2}), we also
have%
\begin{align}
\left\langle \partial _{t}U_{n},{\overline{U}}\right\rangle _{\mathbb{X}%
^{2}}& =\left\langle \partial _{t}U_{n},P_{n}{\overline{U}}\right\rangle _{%
\mathbb{X}^{2}}  \label{qest8} \\
& =-\left\langle \mathrm{A_{W}^{0,\beta ,\nu ,\omega }}U_{n},P_{n}{\overline{%
U}}\right\rangle _{\mathbb{X}^{2}}-\left\langle \Phi _{n}^{t},P_{n}{%
\overline{U}}\right\rangle _{\mathcal{M}_{\Omega ,\Gamma }^{1}}-\left\langle
F(U_{n}),P_{n}{\overline{U}}\right\rangle _{\mathbb{X}^{2}}  \notag
\end{align}%
and%
\begin{align}
\left\langle \partial _{t}\Phi _{n}^{t},{\overline{\Phi }}\right\rangle _{%
\mathcal{M}_{\Omega ,\Gamma }^{1}}& =\left\langle \partial _{t}\Phi
_{n}^{t},Q_{n}{\overline{\Phi }}\right\rangle _{\mathcal{M}_{\Omega ,\Gamma
}^{1}}  \label{qest9} \\
& =\left\langle \mathrm{T}\Phi _{n}^{t},Q_{n}{\overline{\Phi }}\right\rangle
_{\mathcal{M}_{\Omega ,\Gamma }^{1}}+\left\langle U_{n},Q_{n}{\overline{\Phi
}}\right\rangle _{\mathcal{M}_{\Omega ,\Gamma }^{1}},  \notag
\end{align}%
respectively. Thus, from the uniform bounds (\ref{qest7-1})-(\ref{qest7-4}),
we deduce by comparison in equations (\ref{qest8})-(\ref{qest9}) that%
\begin{align}
& \partial _{t}U_{n}\text{ is uniformly bounded in }L^{\infty }\left(
0,T;\left( \mathbb{V}^{1}\right) ^{\ast }\right) \cap L^{2}\left( 0,T;%
\mathbb{X}^{2}\right) ,  \label{qest10-1} \\
& \partial _{t}\Phi _{n}^{t}\text{ is uniformly bounded in }L^{2}\left(
0,T;L_{\mu _{\Omega }}^{2}\left( \mathbb{R}_{+};\mathbb{X}^{2}\right)
\right) \cap L^{\infty }\left( 0,T;L_{\mu _{\Omega }}^{2}\left( \mathbb{R}%
_{+};\left( \mathbb{V}^{1}\right) ^{\ast }\right) \right) .  \label{qest10-2}
\end{align}%
We are now ready to pass to the limit as $n$ goes to infinity. On account of
the above uniform inequalities, we can find $U$ and $\Phi $ such that, up to
subsequences,%
\begin{align}
U_{n}& \rightarrow U~\text{weakly * in }L^{\infty }\left( 0,T;\mathbb{V}%
^{1}\right) ,  \label{qest11-1} \\
U_{n}& \rightarrow U\text{ weakly in }L^{2}\left( 0,T;\mathbb{V}^{2}\right) ,
\label{qest11-2} \\
\Phi _{n}& \rightarrow \Phi \text{ weakly * in }L^{\infty }\left( 0,T;%
\mathcal{M}_{\Omega ,\Gamma }^{2}\right) ,  \label{qest11-3} \\
\Phi _{n}& \rightarrow \Phi \text{ weakly in }L^{2}\left( 0,T;L_{k_{\Omega
}}^{2}\left( \mathbb{R}_{+};\mathbb{V}^{2}\right) \right) , \\
\partial _{t}U_{n}& \rightarrow \partial _{t}U\text{ in }L_{w^{\ast
}}^{\infty }\left( 0,T;\left( \mathbb{V}^{1}\right) ^{\ast }\right) \cap
L_{w}^{2}\left( 0,T;\mathbb{X}^{2}\right) ,  \label{qest11-5} \\
\partial _{t}\Phi _{n}^{t}& \rightarrow \partial _{t}\Phi ^{t}\text{ in }%
L_{w}^{2}\left( 0,T;L_{\mu _{\Omega }}^{2}\left( \mathbb{R}_{+};\mathbb{X}%
^{2}\right) \right) .  \label{qest11-6}
\end{align}%
Due to (\ref{qest11-1}) and (\ref{qest11-5}) and the classical Agmon-Lions
compactness theorem, we also have%
\begin{equation}
U_{n}\rightarrow U~\text{strongly in }C(\left[ 0,T\right] ;\mathbb{X}^{2}).
\label{qest12}
\end{equation}%
Thanks to (\ref{qest11-1})-(\ref{qest11-6}) and (\ref{qest12}), we can
easily control the nonlinear terms in (\ref{eq:approx-ode-1})-(\ref%
{eq:approx-ode-2}). By means of the above convergence properties, we can
pass to the limit in these equations and show that $\left( U,\Phi \right) $
solves (\ref{eq:problem-p-2}) in the sense of Definition \ref{weak2}.

Finally, uniqueness follows from Proposition \ref{uniqueness} owing to
assumption (\ref{eq:assumption-2}). The proof of the theorem is finished.
\end{proof}

\begin{remark}
\label{no-cancel}Observe that the assumption $\mu _{\Omega }\equiv \mu
_{\Gamma }$ in Theorem \ref{quasi-strong} is crucial for the identity (\ref%
{qest2}) to hold. Without it, cancellation in (\ref{qest5}) does not
generally occur and (\ref{qest2bis}) does not hold.
\end{remark}

We now let%
\begin{equation*}
h_{f}\left( s\right) =\int_{0}^{s}f^{^{\prime }}\left( \tau \right) \tau
d\tau \text{ and }h_{g}\left( s\right) =\int_{0}^{s}\widetilde{g}^{^{\prime
}}\left( \tau \right) \tau d\tau .
\end{equation*}%
The next result states that there exist strong solutions, albeit in a much
weaker sense than in Theorem \ref{quasi-strong}, even when the interior and
boundary memory kernels $\mu _{S}\left( \cdot \right) :\mathbb{R}%
_{+}\rightarrow \mathbb{R}_{+}$\ do \emph{not} coincide but both decay
exponentially fast as $s$ goes to infinity.

\begin{theorem}
\label{t:strong-solutions}Let (\ref{miu1})-(\ref{miu3}) be satisfied and
assume that $f,$ $g\in C^{1}\left( \mathbb{R}\right) $ satisfy the following
conditions:

(i) $|f^{^{\prime }}\left( s\right) |$ $\leq \ell _{1}\left( 1+\left\vert
s\right\vert ^{2}\right) ,$ for all $s\in \mathbb{R}$.

(ii) $|g^{^{\prime }}\left( s\right) |$ $\leq \ell _{2}(1+|s|^{r_{2}}),$ for
all $s\in \mathbb{R}$, for some (arbitrary) $r_{2}>2.$

(iii) (\ref{eq:assumption-2}) holds and there exist $C_{i}>0,$ $i=1,\dots
,8, $ such that%
\begin{equation}
\left\{
\begin{array}{ll}
f\left( s\right) s\geq -C_{1}\left\vert s\right\vert ^{2}-C_{2},\text{ }%
g\left( s\right) s\geq -C_{3}\left\vert s\right\vert ^{2}-C_{4}, & \forall
s\in \mathbb{R} \\
h_{f}\left( s\right) \geq -C_{5}\left\vert s\right\vert ^{2}-C_{6},\text{ }%
h_{g}\left( s\right) \geq -C_{7}\left\vert s\right\vert ^{2}-C_{8}, &
\forall s\in \mathbb{R}\text{.}%
\end{array}%
\right.  \label{dissip-3}
\end{equation}%
In addition, assume there exist constants $\delta _{S}>0$ such that%
\begin{equation}
\mu _{S}^{^{\prime }}\left( s\right) +\delta _{S}\mu _{S}\left( s\right)
\leq 0\text{, for all }s\in \mathbb{R}_{+}\text{, }S\in \left\{ \Omega
,\Gamma \right\} .  \label{fading}
\end{equation}%
\noindent Given $\alpha ,\beta >0$, $\omega ,\nu \in (0,1)$, $\left(
U_{0},\Phi _{0}\right) \in \mathbb{V}^{2}\times \left( \mathcal{M}_{\Omega
,\Gamma }^{2}\cap D\left( \mathrm{T}\right) \right) $, there exists a unique
global quasi-strong solution $\left( U,\Phi \right) $\ to problem \textbf{P}
in the sense of Definition \ref{d:strong-solution}.
\end{theorem}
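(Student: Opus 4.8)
The plan is to obtain the quasi-strong solution through the very Galerkin scheme built in the proof of Theorem \ref{t:weak-solutions} (the special basis $\{\Psi_i^{\alpha,\beta,\nu,\omega}\}$, $\{\zeta_i=\varkappa_i\Psi_i^{\alpha,\beta,\nu,\omega}\}$ of Step 2), and then to upgrade the regularity by a pair of a priori estimates uniform in the Galerkin index $n$. The decisive structural point is that the cancellation (\ref{qest2}) that powered Theorem \ref{quasi-strong} is no longer available, because $\mu_{\Omega}\not\equiv\mu_{\Gamma}$ (cf. Remark \ref{no-cancel}); hence I would \emph{not} test with $\mathrm{A_W^{\alpha,\beta,\nu,\omega}}U_n$ to buy the spatial $\mathbb{V}^2$-regularity. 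Instead I would differentiate the approximate system in time and test with the time derivatives, which produces a \emph{closed} estimate for $(\partial_tU_n,\partial_t\Phi_n)$ precisely tuned to the time-regular but spatially weaker notion of Definition \ref{d:strong-solution}.

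First, since the balance condition (\ref{eq:assumption-5}) is here replaced by the coercivity bounds (\ref{dissip-3}), I would redo the energy estimate of Step 1 of Theorem \ref{t:weak-solutions} using $\langle F(U_n),U_n\rangle_{\mathbb{X}^2}\geq -C(\Vert U_n\Vert_{\mathbb{X}^2}^2+1)$ (from $f(s)s\geq -C_1|s|^2-C_2$ and $\widetilde{g}(s)s\geq -(C_3+\nu\beta)|s|^2-C_4$), together with (\ref{eq:operator-T-1}) and Gronwall, to recover the bounds $U_n\in L^\infty(0,T;\mathbb{X}^2)\cap L^2(0,T;\mathbb{V}^1)$ and $\Phi_n\in L^\infty(0,T;\mathcal{M}_{\Omega,\Gamma}^1)$, exactly as in (\ref{qest1}). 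The key estimate then comes from differentiating (\ref{eq:approx-ode-1})--(\ref{eq:approx-ode-2}) in $t$ (licit since $a_i,b_i\in C^2$) and testing with $\overline U=\partial_tU_n$, $\overline\Phi=\partial_t\Phi_n$: the two memory cross terms $\langle\partial_t\Phi_n,\partial_tU_n\rangle_{\mathcal{M}_{\Omega,\Gamma}^1}$ generated by (\ref{identity*}) and by the differentiated memory law cancel on addition, leaving
\begin{equation*}
\frac{1}{2}\frac{d}{dt}\left( \Vert\partial_{t}U_{n}\Vert_{\mathbb{X}^{2}}^{2}+\Vert\partial_{t}\Phi_{n}\Vert_{\mathcal{M}_{\Omega,\Gamma}^{1}}^{2}\right) +\Vert\partial_{t}U_{n}\Vert_{\mathbb{V}^{1}}^{2}-\left\langle\mathrm{T}\partial_{t}\Phi_{n},\partial_{t}\Phi_{n}\right\rangle_{\mathcal{M}_{\Omega,\Gamma}^{1}}+\left\langle F'(U_{n})\partial_{t}U_{n},\partial_{t}U_{n}\right\rangle_{\mathbb{X}^{2}}=0.
\end{equation*}
Here the fading hypothesis (\ref{fading}) sharpens (\ref{eq:operator-T-1}) into the dissipative bound $-\langle\mathrm{T}\partial_t\Phi_n,\partial_t\Phi_n\rangle_{\mathcal{M}_{\Omega,\Gamma}^1}\geq\frac{1}{2}\min\{\delta_\Omega,\delta_\Gamma\}\Vert\partial_t\Phi_n\Vert_{\mathcal{M}_{\Omega,\Gamma}^1}^2$, while the sign conditions (\ref{eq:assumption-2}) (with $f'\geq-M_f$, $\widetilde{g}'\geq-M_g-\nu\beta$) give $\langle F'(U_n)\partial_tU_n,\partial_tU_n\rangle_{\mathbb{X}^2}\geq -C\Vert\partial_tU_n\Vert_{\mathbb{X}^2}^2$. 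Gronwall then yields, uniformly in $n$, $\partial_tU_n\in L^\infty(0,T;\mathbb{X}^2)\cap L^2(0,T;\mathbb{V}^1)$ and $\partial_t\Phi_n\in L^\infty(0,T;\mathcal{M}_{\Omega,\Gamma}^1)$.

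The estimate above is sterile unless $\Vert\partial_tU_n(0)\Vert_{\mathbb{X}^2}$ and $\Vert\partial_t\Phi_n(0)\Vert_{\mathcal{M}_{\Omega,\Gamma}^1}$ are bounded uniformly in $n$, and securing this is the main obstacle, where the full strength of the data class $(U_0,\Phi_0)\in\mathbb{V}^2\times(\mathcal{M}_{\Omega,\Gamma}^2\cap D(\mathrm{T}))$ is consumed. Reading $\partial_tU_n(0)$ off the equation, $\partial_tU_n(0)=-P_n\big(\mathrm{A_W^{0,\beta,\nu,\omega}}U_{n0}+\int_0^\infty\mu_\Omega\,\mathrm{A_W^{\alpha,0,0,\omega}}\Phi_{n0}(s)\,ds+\nu\!\int_0^\infty\!\mu_\Gamma\,\mathrm{C}\xi_{n0}(s)\,ds+F(U_{n0})\big)$, I would control the Wentzell term by (\ref{regularity-oper}) (choosing $U_{n0}\to U_0$ in $\mathbb{V}^2=D(\mathrm{A_W^{0,\beta,\nu,\omega}})$), the memory term by $\int_0^\infty\mu_\Omega(s)\Vert\mathrm{A_W^{\alpha,0,0,\omega}}\Phi_{n0}(s)\Vert_{\mathbb{X}^2}\,ds\lesssim(\int_0^\infty\mu_\Omega)^{1/2}\Vert\Phi_{n0}\Vert_{\mathcal{M}_{\Omega,\Gamma}^2}$ (choosing $\Phi_{n0}\to\Phi_0$ in $\mathcal{M}_{\Omega,\Gamma}^2$), and the nonlinear term using the subcritical growth (i)--(ii) together with the embeddings $\mathbb{V}^2\hookrightarrow C(\overline\Omega)$ and $H^2(\Gamma)\hookrightarrow C(\Gamma)$ in dimension three, which make $F(U_{n0})$ bounded in $\mathbb{X}^2$. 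Likewise $\partial_t\Phi_n(0)=\mathrm{T}\Phi_{n0}+U_{n0}$ stays bounded in $\mathcal{M}_{\Omega,\Gamma}^1$ because $\Phi_0\in D(\mathrm{T})$. The regularity of the eigenbasis makes such approximating data available inside $X_n,M_n$, closing this step; the coercivity bounds in (\ref{dissip-3}), including those on $h_f,h_g$, keep the associated energy bounded below throughout.

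Finally, combining the $L^2(0,T;\mathbb{V}^1)$-bound on $U_n$ from the first estimate with that on $\partial_tU_n$ from the second shows $U_n$ bounded in $W^{1,2}(0,T;\mathbb{V}^1)\hookrightarrow C([0,T];\mathbb{V}^1)$, hence in $L^\infty(0,T;\mathbb{V}^1)$; the last missing property $\Phi_n\in L^\infty(0,T;D(\mathrm{T}))$ then follows by writing the memory law as $\partial_s\Phi_n=U_n-\partial_t\Phi_n$, whose right-hand side is bounded in $L^\infty(0,T;\mathcal{M}_{\Omega,\Gamma}^1)$. With these uniform bounds I would extract weak-$*$ limits, invoke the Agmon--Lions criterion (using $\mathbb{V}^1\hookrightarrow\hookrightarrow\mathbb{X}^2$ and the bound on $\partial_tU_n$) to obtain $U_n\to U$ strongly in $C([0,T];\mathbb{X}^2)$, and pass to the limit in $P_nF(U_n)$ by the growth-controlled continuity of $f,g\in C^1$ exactly as in Step 4 of Theorem \ref{t:weak-solutions}. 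Uniqueness is then inherited verbatim from Proposition \ref{uniqueness}, since (\ref{eq:assumption-2}) holds. I expect the genuine difficulty to be concentrated in the initial-time control: with the kernel coincidence gone, time differentiation is the only clean source of regularity, and it pays off only because the compatibility built into $U_0\in\mathbb{V}^2$, $\Phi_0\in\mathcal{M}_{\Omega,\Gamma}^2\cap D(\mathrm{T})$ forces $\partial_tU_n(0)\in\mathbb{X}^2$ uniformly.
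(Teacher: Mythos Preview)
Your proposal is correct and follows the paper's central strategy: the same Galerkin basis, time differentiation of (\ref{eq:approx-ode-1})--(\ref{eq:approx-ode-2}), testing with $(\partial_tU_n,\partial_t\Phi_n)$, use of (\ref{eq:assumption-2}) on the nonlinear terms, and control of $(\partial_tU_n(0),\partial_t\Phi_n(0))$ in $\mathcal{H}_{\Omega,\Gamma}^{0,1}$ from the regularity $(U_0,\Phi_0)\in\mathbb{V}^2\times(\mathcal{M}_{\Omega,\Gamma}^2\cap D(\mathrm{T}))$. The passage to the limit and the appeal to Proposition~\ref{uniqueness} are likewise the same.

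The one genuine difference is how you obtain $U_n\in L^\infty(0,T;\mathbb{V}^1)$. You combine the $L^2(0,T;\mathbb{V}^1)$-bound on $U_n$ from the basic energy estimate with the $L^2(0,T;\mathbb{V}^1)$-bound on $\partial_tU_n$ from the differentiated estimate and invoke the embedding $W^{1,2}(0,T;\mathbb{V}^1)\hookrightarrow C([0,T];\mathbb{V}^1)$. The paper instead tests the differentiated equation (\ref{ap1}) with $\overline U=U_n$, producing a Lyapunov-type identity (\ref{qest18}) whose energy contains $\int_\Omega h_f(u_n)\,dx+\int_\Gamma h_g(u_n)\,d\sigma$; the lower bounds on $h_f,h_g$ in (\ref{dissip-3}) make this energy coercive (see (\ref{qest19tris})), and the fading condition (\ref{fading}) is used in (\ref{qest20}) to convert the cross term $-\langle\partial_t\Phi_n^t,U_n\rangle_{\mathcal{M}_{\Omega,\Gamma}^1}$ into an $L^1(0,T)$ quantity. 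Your route is cleaner and in fact does \emph{not} require (\ref{fading}) or the $h_f,h_g$ lower bounds at this stage: in your displayed estimate the nonnegativity (\ref{eq:operator-T-1}) already suffices, so your invocations of (\ref{fading}) and of the $h_f,h_g$ bounds are superfluous. The paper's more elaborate route is precisely what makes those two hypotheses enter the theorem statement; your argument shows they are not essential for the conclusion as stated.
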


\begin{proof}
It suffices to provide bounds for $(U,\Phi ^{t})$ in the (more regular)
spaces in (\ref{eq:strong-defn-1})-(\ref{eq:strong-defn-4}). With reference
to problem \textbf{P}$_{n},$ we consider the approximate problem of finding $%
\left( U_{n},\Phi _{n}\right) $ of the form (\ref{eq:approximate-1}) such
that, $\left( U_{n},\Phi _{n}\right) $ already satisfies (\ref%
{eq:approx-ode-1})-(\ref{eq:approx-ode-2}), and%
\begin{align}
& \left\langle \partial _{tt}U_{n},{\overline{U}}\right\rangle _{\mathbb{X}%
^{2}}+\left\langle \mathrm{A_{W}^{0,\beta ,\nu ,\omega }}\partial _{t}U_{n},{%
\overline{U}}\right\rangle _{\mathbb{X}^{2}}+\left\langle \partial _{t}\Phi
_{n}^{t},{\overline{U}}\right\rangle _{\mathcal{M}_{\Omega ,\Gamma }^{1}}
\label{ap1} \\
& =-\left\langle f^{^{\prime }}\left( u_{n}\right) \partial _{t}u_{n},\bar{u}%
\right\rangle _{L^{2}\left( \Omega \right) }-\left\langle \widetilde{g}%
^{^{\prime }}\left( u_{n}\right) \partial _{t}u_{n},\bar{v}\right\rangle
_{L^{2}\left( \Gamma \right) }  \notag
\end{align}%
and
\begin{equation}
\left\langle \partial _{tt}\Phi _{n}^{t},{\overline{\Phi }}\right\rangle _{%
\mathcal{M}_{\Omega ,\Gamma }^{1}}=\left\langle \mathrm{T}\partial _{t}\Phi
_{n}^{t},{\overline{\Phi }}\right\rangle _{\mathcal{M}_{\Omega ,\Gamma
}^{1}}+\left\langle \partial _{t}U_{n},{\overline{\Phi }}\right\rangle _{%
\mathcal{M}_{\Omega ,\Gamma }^{1}}  \label{ap2}
\end{equation}%
hold for almost all $t\in \left( 0,T\right) $, for all ${\overline{U}}=(\bar{%
u},\bar{v})^{\mathrm{tr}}\in X_{n}$ and ${\overline{\Phi }}=(\bar{\eta},\bar{%
\xi})^{\mathrm{tr}}\in M_{n}$; moreover, the function $\left( U_{n},\Phi
_{n}\right) $ fulfils the conditions $U_{n}\left( 0\right) =P_{n}U_{0},$ $%
\Phi _{n}^{0}=Q_{n}\Phi ^{0}$ and%
\begin{equation}
\partial _{t}U_{n}\left( 0\right) =P_{n}\widehat{U}_{0},\text{ }\partial
_{t}\Phi _{n}^{0}=Q_{n}\widehat{\Phi }^{0},  \label{ap3}
\end{equation}%
where we have set%
\begin{align*}
\widehat{U}_{0}& :=-\mathrm{A_{W}^{0,\beta ,\nu ,\omega }}%
U_{0}-\int_{0}^{\infty }\mu _{\Omega }(s)\mathrm{A_{W}^{\alpha ,0,0,\omega }}%
\Phi _{0}(s)ds-\nu \int_{0}^{\infty }\mu _{\Gamma }(s)\binom{0}{\mathrm{C}%
\xi _{0}(s)}ds-F(U_{0})\text{, } \\
\widehat{\Phi }^{0}& :=\mathrm{T}\Phi _{0}(s)+U_{0}.
\end{align*}%
Note that, if $U_{0}\in \mathbb{V}^{2}$ and $\Phi ^{0}\in D\left( \mathrm{T}%
\right) \cap \mathcal{M}_{\Omega ,\Gamma }^{2}$, then $(\widehat{U}_{0},%
\widehat{\Phi }^{0})\in \mathbb{X}^{2}\times \mathcal{M}_{\Omega ,\Gamma
}^{1}=\mathcal{H}_{\Omega ,\Gamma }^{0,1}$, owing to the continuous
embeddings $H^{2}\left( \Omega \right) \subset L^{\infty }\left( \Omega
\right) ,$ $H^{2}\left( \Gamma \right) \subset L^{\infty }\left( \Gamma
\right) $. In particular, owing to the boundedness of the projectors $P_{n}$
and $Q_{n}$ on the corresponding subspaces, we have%
\begin{equation}
\left\Vert \left( \partial _{t}U_{n}\left( 0\right) ,\partial _{t}\Phi
_{n}^{0}\right) \right\Vert _{\mathcal{H}_{\Omega ,\Gamma }^{0,1}}\leq
K\left( R\right) ,  \label{apic}
\end{equation}%
for all $\left( U_{0},\Phi ^{0}\right) \in \mathbb{V}^{2}\times \left(
D\left( \mathrm{T}\right) \cap \mathcal{M}_{\Omega ,\Gamma }^{2}\right) $
such that $\left\Vert \left( U_{0},\Phi ^{0}\right) \right\Vert _{\mathcal{H}%
_{\Omega ,\Gamma }^{2,2}}\leq R,$ for some positive monotone nondecreasing
function $K$. Indeed, according to assumptions (\ref{miu1})-(\ref{miu3}), we
can infer that%
\begin{equation}
0\leq \int_{0}^{\infty }\mu _{S}(s)ds=\mu _{S}^{0}<\infty ,\text{ for each }%
S\in \left\{ \Omega ,\Gamma \right\} ,  \label{qest15}
\end{equation}%
such that repeated application of Jensen's inequality yields%
\begin{align*}
\left\Vert \int_{0}^{\infty }\mu _{\Omega }(s)\mathrm{A_{W}^{\alpha
,0,0,\omega }}\Phi _{0}(s)ds\right\Vert _{\mathbb{X}^{2}}^{2}& \leq \mu
_{\Omega }^{0}\int_{0}^{\infty }\mu _{\Omega }(s)\left\Vert \mathrm{%
A_{W}^{\alpha ,0,0,\omega }}\Phi _{0}(s)\right\Vert _{\mathbb{X}^{2}}^{2}ds
\\
& \leq C\mu _{\Omega }^{0}\int_{0}^{\infty }\mu _{\Omega }(s)\left\Vert \Phi
_{0}(s)\right\Vert _{H^{2}}^{2}ds
\end{align*}%
and%
\begin{align*}
\left\Vert \int_{0}^{\infty }\mu _{\Gamma }(s)\mathrm{C}\xi _{0}(s){\mathrm{d%
}}s\right\Vert _{L^{2}\left( \Gamma \right) }^{2}& \leq \mu _{\Gamma
}^{0}\int_{0}^{\infty }\mu _{\Gamma }(s)\left\Vert \mathrm{C}\xi
_{0}(s)\right\Vert _{L^{2}\left( \Gamma \right) }^{2}ds \\
& \leq C\mu _{\Gamma }^{0}\int_{0}^{\infty }\mu _{\Gamma }(s)\left\Vert \Phi
_{0}(s)\right\Vert _{H^{2}\left( \Gamma \right) }^{2}ds.
\end{align*}

Our starting point is the validity of the energy estimate (\ref{qest1})
which holds on account of the first assumption of (\ref{dissip-3}). Next we
proceed to take $\overline{U}=\partial _{t}U_{n}(t)$ in (\ref{ap1}) and $%
\overline{\Phi }=\partial _{t}\Phi _{n}^{t}\left( s\right) $ in (\ref{ap2}),
respectively, by noting that this choice $\left( \overline{U},\overline{\Phi
}\right) $ is an admissible test function. Summing the resulting identities
and using (\ref{eq:assumption-2}), we obtain%
\begin{align}
& \frac{1}{2}\frac{d}{dt}\left\{ \left\Vert \partial _{t}U_{n}\right\Vert _{%
\mathbb{X}^{2}}^{2}+\left\Vert \partial _{t}\Phi _{n}^{t}\right\Vert _{%
\mathcal{M}_{\Omega ,\Gamma }^{1}}^{2}\right\} -\left\langle \mathrm{T}%
\partial _{t}\Phi _{n}^{t},\partial _{t}\Phi _{n}^{t}\right\rangle _{%
\mathcal{M}_{\Omega ,\Gamma }^{1}}  \label{qest16} \\
& +\left( \omega \left\Vert \nabla \partial _{t}u_{n}\right\Vert
_{L^{2}(\Omega )}^{2}+\nu \left\Vert \nabla _{\Gamma }\partial
_{t}u_{n}\right\Vert _{L^{2}(\Gamma )}^{2}+\beta \left\Vert \partial
_{t}u_{n}\right\Vert _{L^{2}(\Gamma )}^{2}\right)  \notag \\
& =-\left\langle f^{^{\prime }}\left( u_{n}\right) \partial
_{t}u_{n},\partial _{t}u_{n}\right\rangle _{L^{2}\left( \Omega \right)
}-\left\langle \widetilde{g}^{^{\prime }}\left( u_{n}\right) \partial
_{t}u_{n},\partial _{t}u_{n}\right\rangle _{L^{2}\left( \Gamma \right) }
\notag \\
& \leq \max \left( M_{f},M_{g}\right) \left\Vert \partial
_{t}U_{n}\right\Vert _{\mathbb{X}^{2}}^{2}.  \notag
\end{align}%
Thus, integrating (\ref{qest16}) with respect to $\tau \in (0,t),$ by
application of Growall's inequality, we have the estimate%
\begin{equation}
\left\Vert \left(\partial _{t}U_{n}\left( t\right) ,\partial _{t}\Phi
_{n}^{t}\right)\right\Vert _{\mathcal{H}_{\Omega ,\Gamma
}^{0,1}}^{2}+\int_{0}^{t} \left( 2\left\Vert \partial _{t}U_{n}(\tau
)\right\Vert _{\mathbb{V}^{1}}^{2}+\left\Vert \partial _{t}\Phi _{n}^{\tau
}\right\Vert _{L_{k_{\Omega }\oplus k_{\Gamma }}^{2}\left( \mathbb{R}_{+};%
\mathbb{V}^{1}\right) }^{2} \right) d\tau \leq K_{T}\left( R\right) ,
\label{qest17}
\end{equation}%
for all $t\geq 0$ and all $R>0$ such that $\left\Vert \left( U_{0},\Phi
^{0}\right) \right\Vert _{\mathcal{H}_{\Omega ,\Gamma }^{2,2}}\leq R$.
Thanks to (\ref{qest17}), we deduce the uniform bounds
\begin{align}
\partial _{t}U_{n}& \in L^{\infty }\left( 0,T;\mathbb{X}^{2}\right) \cap
L^{2}\left( 0,T;\mathbb{V}^{1}\right) ,  \label{stronger-U} \\
\partial _{t}\Phi _{n}& \in L^{\infty }\left( 0,T;\mathcal{M}_{\Omega
,\Gamma }^{1}\right) \cap L^{2}\left(0,T;L_{k_{\Omega }\oplus k_{\Gamma
}}^{2}\left( \mathbb{R}_{+};\mathbb{V}^{1}\right)\right),
\label{stronger-Phi}
\end{align}%
which establishes (\ref{eq:strong-defn-3})-(\ref{eq:strong-defn-4}) for the
approximate solution $\left( U_{n},\Phi _{n}\right) $.

We now establish a bound for $U_{n}$ in $L^{\infty }\left( 0,T;\mathbb{V}%
^{1}\right) $ in a different way from the proof of Theorem \ref{quasi-strong}%
. For this estimate, the uniform regularity in (\ref{stronger-U})-(\ref%
{stronger-Phi}) is crucial. To this end, we proceed to take $\overline{U}%
=U_{n}(t)$ in (\ref{ap1}) in order to derive%
\begin{align}
& \frac{d}{dt}\left( \left\Vert U_{n}\right\Vert _{\mathbb{V}%
^{1}}^{2}+\left\langle \partial _{t}U_{n},U_{n}\right\rangle _{\mathbb{X}%
^{2}}+2\int_{\Omega }h_{f}\left( u_{n}\right) dx+2\int_{\Gamma }h_{g}\left(
u_{n}\right) d\sigma \right)  \label{qest18} \\
& =2\left\Vert \partial _{t}{U}_{n}\right\Vert _{\mathbb{X}%
^{2}}^{2}-2\left\langle \partial _{t}\Phi _{n}^{t},{U}_{n}\right\rangle _{%
\mathcal{M}_{\Omega ,\Gamma }^{1}}.  \notag
\end{align}%
Moreover, using (\ref{stronger-U}) and owing to the Cauchy-Schwarz and Young
inequalities and the second of (\ref{dissip-3}), the following basic
inequality holds:%
\begin{align}
& C_{\ast }\left\Vert U_{n}\right\Vert _{\mathbb{V}^{1}}^{2}-K_{T}\left(
R\right)  \label{qest19tris} \\
& \leq \left\Vert U_{n}\right\Vert _{\mathbb{V}^{1}}^{2}+\left\langle
\partial _{t}U_{n},U_{n}\right\rangle _{\mathbb{X}^{2}}+2\int_{\Omega
}h_{f}\left( u_{n}\right) dx+2\int_{\Gamma }h_{g}\left( u_{n}\right) d\sigma
\notag \\
& \leq C\left\Vert U_{n}\right\Vert _{\mathbb{V}^{1}}^{2}+K_{T}\left(
R\right) ,  \notag
\end{align}%
for some constants $C_{\ast },C>0$ and some function $K_{T}>0,$ all
independent of $n$ and $t$. Finally, for any $\eta >0$ we estimate%
\begin{align}
-\left\langle \partial _{t}\Phi _{n}^{t},{U}_{n}\right\rangle _{\mathcal{M}%
_{\Omega ,\Gamma }^{1}}& \leq \eta \left\Vert U_{n}\right\Vert _{\mathbb{V}%
^{1}}^{2}+C_{\eta }\int_{0}^{\infty }\mu _{\Omega }(s)\left\Vert \partial
_{t}\eta _{n}(s)\right\Vert _{H^{1}}^{2}ds+C_{\eta }\int_{0}^{\infty }\mu
_{\Gamma }(s)\left\Vert \partial _{t}\xi _{n}(s)\right\Vert _{H^{1}\left(
\Gamma \right) }^{2}ds  \label{qest20} \\
& \leq \eta \left\Vert U_{n}\right\Vert _{\mathbb{V}^{1}}^{2}-C_{\eta
}\delta _{\Omega }^{-1}\int_{0}^{\infty }\mu _{\Omega }^{^{\prime
}}(s)\left\Vert \partial _{t}\eta _{n}(s)\right\Vert _{H^{1}}^{2}ds-C_{\eta
}\delta _{\Gamma }^{-1}\int_{0}^{\infty }\mu _{\Gamma }^{^{\prime
}}(s)\left\Vert \partial _{t}\xi _{n}(s)\right\Vert _{H^{1}\left( \Gamma
\right) }^{2}ds,  \notag
\end{align}%
where in the last line we have employed assumption (\ref{fading}). Thus,
from (\ref{qest18}) we obtain the inequality%
\begin{align}
& \frac{d}{dt}\left( \left\Vert U_{n}\right\Vert _{\mathbb{V}%
^{1}}^{2}+\left\langle \partial _{t}U_{n},U_{n}\right\rangle _{\mathbb{X}%
^{2}}+2\int_{\Omega }h_{f}\left( u_{n}\right) dx+2\int_{\Gamma }h_{g}\left(
u_{n}\right) d\sigma \right)  \label{qest21} \\
& \leq C_{\eta }\left\Vert U_{n}\left( t\right) \right\Vert _{\mathbb{V}%
^{1}}^{2}+\Lambda _{2}\left( t\right) ,  \notag
\end{align}%
for a.e. $t\in \left( 0,T\right) ,$ where we have set%
\begin{equation*}
\Lambda _{2}\left( t\right) :=2\left\Vert \partial _{t}{U}_{n}\right\Vert _{%
\mathbb{X}^{2}}^{2}-2\left\langle \partial _{t}\Phi _{n}^{t},{U}%
_{n}\right\rangle _{\mathcal{M}_{\Omega ,\Gamma }^{1}}.
\end{equation*}%
We now observe that $\Lambda _{2}\in L^{1}\left( 0,T\right) $ on account of (%
\ref{qest1}), (\ref{stronger-U})-(\ref{stronger-Phi}) and (\ref{qest19tris}%
)-(\ref{qest20}), because $\partial _{t}U_{n}\left( 0\right) \in \mathbb{X}%
^{2}$ by (\ref{ap3}). Thus, observing (\ref{qest19tris}), the application of
Gronwall's inequality to (\ref{qest21}) yields the desired uniform bound%
\begin{equation}
U_{n}\in L^{\infty }\left( 0,T;\mathbb{V}^{1}\right) .  \label{qest22}
\end{equation}%
Finally, by comparison in equation (\ref{qest9}), by virtue of the uniform
bounds (\ref{qest22}) and (\ref{stronger-Phi}) we also deduce%
\begin{equation}
\mathrm{T}\Phi _{n}^{t}\in L^{\infty }\left( 0,T;\mathcal{M}_{\Omega ,\Gamma
}^{1}\right)  \label{qest23}
\end{equation}%
uniformly with respect to all $n\geq 1$. In particular, it holds $\Phi
_{n}^{t}\in L^{\infty }\left( 0,T;D\left( \mathrm{T}\right) \right) $
uniformly. Finally, by (\ref{qest22}) and assumptions (i)-(ii), we also have%
\begin{equation*}
F\left( U_{n}\right) \in L^{\infty }\left( 0,T;\mathbb{X}^{2}\right) .
\end{equation*}%
We can pass to the limit as $n\rightarrow \infty $ in (\ref{stronger-U})-(%
\ref{stronger-Phi}), (\ref{qest22}) and (\ref{qest23}) to find a limit point
$\left( U,\Phi \right) $ with the properties stated in (\ref%
{eq:strong-defn-1})-(\ref{eq:strong-defn-4}). Passage to the limit in
equations (\ref{eq:approx-ode-1})-(\ref{eq:approx-ode-2}) and in particular,
in the nonlinear terms is done in the same fashion as in the proof of
Theorem \ref{quasi-strong}. Indeed, exploiting (\ref{qest22}) and (\ref%
{stronger-U}) we still have the validity of (\ref{qest12}) and, hence, the
limit solution $\left( U,\Phi \right) $ solves (\ref{eq:problem-p-2}) in the
sense of Definition \ref{d:strong-solution}.

Uniqueness follows from Proposition \ref{uniqueness} owing to assumption (%
\ref{eq:assumption-2}). The proof of theorem is now complete.
\end{proof}

Finally, we may conclude with the following.

\begin{theorem}
Let the assumptions of Theorem \ref{quasi-strong} be satisfied. Let $\left(
U,\Phi \right) $ be a unique strong solution corresponding to a given
initial datum $\left( U_{0},\Phi _{0}\right) \in \mathcal{H}_{\Omega ,\Gamma
}^{2,2}.$ Then, this solution also satisfies%
\begin{equation}
U\in L^{\infty }\left( 0,T;\mathbb{V}^{2}\right) ,\text{ }\partial _{t}U\in
L^{\infty }\left( 0,T;\mathbb{X}^{2}\right) ,\text{ }\partial _{t}\Phi \in
L^{\infty }\left( 0,T;\mathcal{M}_{\Omega ,\Gamma }^{1}\right) .
\label{dissip-a1bis}
\end{equation}%
Moreover, the equations in (\ref{eq:problem-p-2}) are satisfied in the
strong sense, that is, almost everywhere on $\left( 0,T\right) $ for $\Xi
\in \mathbb{X}^{2}$ and $\Pi \in \mathcal{M}_{\Omega ,\Gamma }^{0}$.
\end{theorem}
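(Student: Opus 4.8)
The plan is to enhance the strong solution furnished by Theorem \ref{quasi-strong} by combining two estimates, both carried out on the Galerkin scheme \textbf{P}$_{n}$ and then passed to the limit, the limit being identified with the given solution through the uniqueness Proposition \ref{uniqueness}: the time-differentiated energy estimate from the proof of Theorem \ref{t:strong-solutions}, and a \emph{pointwise-in-time} elliptic estimate in the spirit of Step 1 of the proof of Theorem \ref{quasi-strong}. Already at my disposal are the bounds $U\in L^{\infty}(0,T;\mathbb{V}^{1})\cap L^{2}(0,T;\mathbb{V}^{2})$ and $\Phi\in L^{\infty}(0,T;\mathcal{M}_{\Omega ,\Gamma }^{2})$ supplied by Theorem \ref{quasi-strong}, together with the enhanced datum $(U_{0},\Phi_{0})\in\mathbb{V}^{2}\times\mathcal{M}_{\Omega ,\Gamma }^{2}$.

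First I would establish $\partial_{t}U\in L^{\infty}(0,T;\mathbb{X}^{2})$ and $\partial_{t}\Phi\in L^{\infty}(0,T;\mathcal{M}_{\Omega ,\Gamma }^{1})$. Differentiating the approximate equations in time to obtain (\ref{ap1})--(\ref{ap2}) and testing with $(\partial_{t}U_{n},\partial_{t}\Phi_{n}^{t})$ produces the differential inequality (\ref{qest16}), whose right-hand side is controlled solely by the one-sided bounds (\ref{eq:assumption-2}), whose memory coupling cancels upon summation, and in which the term $-\langle\mathrm{T}\partial_{t}\Phi_{n}^{t},\partial_{t}\Phi_{n}^{t}\rangle_{\mathcal{M}_{\Omega ,\Gamma }^{1}}\geq 0$ by (\ref{eq:operator-T-1}) may be discarded; in particular neither the fading condition (\ref{fading}) nor the identity $\mu_{\Omega}\equiv\mu_{\Gamma}$ enters at this stage. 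A Gronwall argument then yields (\ref{qest17}), hence the two $L^{\infty}$ bounds, \emph{provided} the approximate initial velocities $(\partial_{t}U_{n}(0),\partial_{t}\Phi_{n}^{0})=(P_{n}\widehat{U}_{0},Q_{n}\widehat{\Phi}^{0})$ are uniformly bounded in $\mathcal{H}_{\Omega ,\Gamma }^{0,1}$. As in the computation leading to (\ref{qest15}), the datum $U_{0}\in\mathbb{V}^{2}\hookrightarrow L^{\infty}$ together with $\Phi_{0}\in\mathcal{M}_{\Omega ,\Gamma }^{2}$ (via Jensen's inequality) bounds $\widehat{U}_{0}$ and $F(U_{0})$ in $\mathbb{X}^{2}$; the control of $\widehat{\Phi}^{0}=\mathrm{T}\Phi_{0}+U_{0}$ in $\mathcal{M}_{\Omega ,\Gamma }^{1}$ is the delicate point, as it requires $\mathrm{T}\Phi_{0}\in\mathcal{M}_{\Omega ,\Gamma }^{1}$, i.e. the compatibility $\Phi_{0}\in D(\mathrm{T})$. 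This is forced by the asserted conclusion $\partial_{t}\Phi\in L^{\infty}(\mathcal{M}_{\Omega ,\Gamma }^{1})$ read at $t=0$, and I would include it among the standing hypotheses exactly as in Theorem \ref{t:strong-solutions}.

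Next I would upgrade the available $L^{2}(0,T;\mathbb{V}^{2})$ regularity to $L^{\infty}(0,T;\mathbb{V}^{2})$. Testing the approximate $U$-equation (\ref{eq:approx-ode-1}) at almost every fixed $t$ with $Z_{n}=\mathrm{A_{W}^{\alpha ,\beta ,\nu ,\omega }}U_{n}$, I would use $\langle\mathrm{A_{W}^{0,\beta ,\nu ,\omega }}U_{n},Z_{n}\rangle_{\mathbb{X}^{2}}=\Vert Z_{n}\Vert_{\mathbb{X}^{2}}^{2}-\alpha\omega\langle u_{n},z_{n}\rangle_{L^{2}(\Omega)}$ and move every other term to the right. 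The crucial novelty is that, \emph{having already secured} $\partial_{t}U_{n}\in L^{\infty}(0,T;\mathbb{X}^{2})$ from the previous step, the time-derivative term is estimated pointwise, $\langle\partial_{t}U_{n},Z_{n}\rangle_{\mathbb{X}^{2}}\leq\delta\Vert Z_{n}\Vert_{\mathbb{X}^{2}}^{2}+C_{\delta}\Vert\partial_{t}U_{n}\Vert_{\mathbb{X}^{2}}^{2}$, rather than being integrated in time; while the memory term $\langle\Phi_{n}^{t},Z_{n}\rangle_{\mathcal{M}_{\Omega ,\Gamma }^{1}}$ is bounded directly through its representation in (\ref{identity*}) together with (\ref{regularity-oper}) and Young's inequality by $\delta\Vert Z_{n}\Vert_{\mathbb{X}^{2}}^{2}+C_{\delta}\Vert\Phi_{n}^{t}\Vert_{\mathcal{M}_{\Omega ,\Gamma }^{2}}^{2}$, so that the $L^{\infty}(\mathcal{M}_{\Omega ,\Gamma }^{2})$-regularity of Theorem \ref{quasi-strong} (which is where $\mu_{\Omega}\equiv\mu_{\Gamma}$ was genuinely needed) is all that is invoked. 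Absorbing the $\delta\Vert Z_{n}\Vert^{2}$ contributions and using (\ref{regularity-oper}) once more gives $\Vert U_{n}\Vert_{\mathbb{V}^{2}}^{2}\leq C\Vert Z_{n}\Vert_{\mathbb{X}^{2}}^{2}$ bounded by a quantity lying in $L^{\infty}(0,T)$, because $\partial_{t}U_{n}$, $\Phi_{n}$ and $U_{n}$ are already uniformly bounded in $L^{\infty}(\mathbb{X}^{2})$, $L^{\infty}(\mathcal{M}_{\Omega ,\Gamma }^{2})$ and $L^{\infty}(\mathbb{V}^{1})$, respectively.

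I expect the main obstacle to be the nonlinear surface contribution inside $\langle F(U_{n}),Z_{n}\rangle_{\mathbb{X}^{2}}$. Integrating by parts exactly as in (\ref{qest5bis}) converts the bulk and boundary derivatives of $f,\widetilde{g}$ into the sign-controlled terms of (\ref{qest5tris})--(\ref{qest5quad}), but leaves the supercritical surface integral $\int_{\Gamma}u_{n}^{r+1}\partial_{n}u_{n}\,d\sigma$, whose control demands precisely the sharp trace/interpolation chain (\ref{qest5q})--(\ref{qest5last}) and the growth restrictions $r_{1},r_{2}<5/2$ of hypotheses (i)--(ii); this is exactly why those hypotheses, and not merely a balance condition, are imposed. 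Once the three uniform bounds are established they pass to the limit by weak-$\ast$ lower semicontinuity and coincide with $(U,\Phi)$ by uniqueness. Finally, with $U\in L^{\infty}(0,T;\mathbb{V}^{2})$, which embeds into $L^{\infty}(0,T;L^{\infty}(\Omega)\times L^{\infty}(\Gamma))$, $\partial_{t}U\in L^{\infty}(0,T;\mathbb{X}^{2})$ and $\partial_{t}\Phi\in L^{\infty}(0,T;\mathcal{M}_{\Omega ,\Gamma }^{1})$ (so that $\mathrm{T}\Phi=\partial_{t}\Phi-U\in L^{\infty}(0,T;\mathcal{M}_{\Omega ,\Gamma }^{0})$), every term appearing in (\ref{eq:problem-p-2}) belongs to $L^{\infty}(0,T;\mathbb{X}^{2})$, respectively $L^{\infty}(0,T;\mathcal{M}_{\Omega ,\Gamma }^{0})$; a density argument then lets the test functions range over all of $\Xi\in\mathbb{X}^{2}$ and $\Pi\in\mathcal{M}_{\Omega ,\Gamma }^{0}$, so that (\ref{eq:problem-p-2}) holds in the strong sense almost everywhere on $(0,T)$.
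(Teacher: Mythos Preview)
Your proposal is correct and reaches the same conclusions as the paper, but the route you take to obtain $U\in L^{\infty}(0,T;\mathbb{V}^{2})$ is genuinely different from the paper's.

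For the bounds on $\partial_{t}U$ and $\partial_{t}\Phi$, you do exactly what the paper does: invoke the time-differentiated estimate (\ref{qest16})--(\ref{qest17}) from the proof of Theorem \ref{t:strong-solutions}. You are right that this step requires neither $\mu_{\Omega}\equiv\mu_{\Gamma}$ nor the fading condition (\ref{fading}), and you correctly flag that the bound on $\widehat{\Phi}^{0}=\mathrm{T}\Phi_{0}+U_{0}$ in $\mathcal{M}_{\Omega,\Gamma}^{1}$ presupposes $\Phi_{0}\in D(\mathrm{T})$; the paper's proof silently relies on the same compatibility through its reference to (\ref{ap3}).

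Where you depart from the paper is in the upgrade to $L^{\infty}(0,T;\mathbb{V}^{2})$. The paper does not reuse the Galerkin test with $Z_{n}$ pointwise. Instead, it passes to the limit solution, writes the elliptic system (\ref{qest13}) with right-hand side $(H_{\Omega},H_{\Gamma})\in L^{\infty}(0,T;\mathbb{X}^{2})$ (this uses $\Phi\in L^{\infty}(0,T;\mathcal{M}_{\Omega,\Gamma}^{2})$ and the just-obtained $\partial_{t}U\in L^{\infty}(0,T;\mathbb{X}^{2})$), appeals to an external $L^{\infty}$-regularity result \cite[Lemma A.2]{MZ} to first secure $u\in L^{\infty}(0,T;L^{\infty}(\overline{\Omega}))$, and only then applies Lemma \ref{t:appendix-lemma-3} with $p_{1}=H_{\Omega}-f(u)$, $p_{2}=H_{\Gamma}-\widetilde{g}(u)$ to conclude $U\in L^{\infty}(0,T;\mathbb{V}^{2})$. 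Your approach is more self-contained---it avoids the outside $L^{\infty}$ lemma entirely---but it leans on the supercritical surface estimate (\ref{qest5q})--(\ref{qest5last}) and hence on the growth restrictions $r_{1},r_{2}<5/2$ in an essential way at this step; the paper's elliptic bootstrap, by contrast, needs those restrictions only indirectly (through Theorem \ref{quasi-strong}) and would tolerate arbitrary polynomial growth once $L^{\infty}$ is in hand. Either argument is valid under the stated hypotheses.
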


\begin{proof}
First, we note that $\left( U_{0},\Phi _{0}\right) \in \mathcal{H}_{\Omega
,\Gamma }^{2,2}$ is equivalent to $\left( U_{0},\Phi _{0}\right) \in
\mathcal{H}_{\Omega ,\Gamma }^{1,2}$ and $U_{0}\in \mathbb{V}^{2}$. Thus, it
suffices to prove that the additional regularity (\ref{dissip-a1bis}) is
enjoyed by any strong solution of Theorem \ref{quasi-strong}. We recall that
the limit point $\left( U,\Phi \right) $ induced by (\ref{qest11-1})-(\ref%
{qest11-6}) solves (\ref{eq:problem-p-2}), and it also satisfies%
\begin{equation*}
\left\Vert \partial _{t}u\right\Vert _{L^{\infty }\left( 0,T;L^{2}\left(
\Omega \right) \right) }+\left\Vert \partial _{t}u\right\Vert _{L^{\infty
}\left( 0,T;L^{2}\left( \Gamma \right) \right) }\leq C_{T},
\end{equation*}%
thanks to (\ref{qest17})-(\ref{stronger-Phi}) and the fact that $U_{0}\in
\mathbb{V}^{2}$. It follows that $\left( U,\Phi \right) $ also satisfies the
following elliptic system%
\begin{equation}
\left\{
\begin{array}{ll}
-\omega \Delta u+\alpha \omega u+f\left( u\right) =H_{\Omega }, & \text{a.e.
in }\Omega \times \left( 0,T\right) , \\
-\nu \Delta _{\Gamma }u+\omega \partial _{n}u+\nu \beta +\widetilde{g}\left(
u\right) =H_{\Gamma }, & \text{a.e. in }\Gamma \times \left( 0,T\right) ,%
\end{array}%
\right.  \label{qest13}
\end{equation}%
where we have set%
\begin{equation*}
\binom{H_{\Omega }}{H_{\Gamma }}:=\binom{\omega \int_{0}^{\infty }\mu
_{\Omega }(s)\left( -\Delta \eta ^{t}\left( s\right) +\alpha \eta ^{t}\left(
s\right) \right) ds-\partial _{t}u}{\nu \int_{0}^{\infty }\mu _{\Gamma }(s)%
\mathrm{C}\xi ^{t}\left( s\right) ds-\partial _{t}u}.
\end{equation*}%
We now observe that in view of (\ref{qest11-3}), we have $\Phi ^{t}=\left(
\eta ^{t},\xi ^{t}\right) ^{\mathrm{tr}}\in L^{\infty }\left( 0,T;L_{\mu
_{\Omega }}^{2}\left( \mathbb{R}_{+};\mathbb{V}^{2}\right) \right) ,$ and
therefore,%
\begin{equation}
\left\Vert H_{\Omega }\right\Vert _{L^{\infty }\left( 0,T;L^{2}\left( \Omega
\right) \right) }+\left\Vert H_{\Gamma }\right\Vert _{L^{\infty }\left(
0,T;L^{2}\left( \Gamma \right) \right) }\leq C_{T},  \label{qest13q}
\end{equation}%
owing to (\ref{stronger-U}) (see the scheme developed in Theorem \ref%
{t:strong-solutions}, cf. (\ref{ap1})-(\ref{ap3}) and (\ref{qest16})-(\ref%
{qest17})). Thus, we can apply a regularity result for the system (\ref%
{qest13}) from \cite[Lemma A.2]{MZ} to further deduce%
\begin{equation}
u\in L^{\infty }\left( 0,T;L^{\infty }\left( \Omega \right) \right) ,\text{
\textrm{tr}}_{\mathrm{D}}\left( u\right) \in L^{\infty }\left( 0,T;L^{\infty
}\left( \Gamma \right) \right) .  \label{qest13bis}
\end{equation}%
On the other hand, to prove the first bound on $U$ from (\ref{dissip-a1bis}%
), we need to apply Lemma \ref{t:appendix-lemma-3} with the obvious choices:%
\begin{equation*}
p_{1}:=H_{\Omega }-f\left( u\right) ,\text{ }p_{2}:=-\widetilde{g}\left(
u\right) +H_{\Gamma }.
\end{equation*}%
Owing to (\ref{qest13bis}) and once again to (\ref{qest13q}), it is not
difficult to realize that $p_{1}\in L^{\infty }\left( 0,T;L^{2}\left( \Omega
\right) \right) ,$ $p_{2}\in L^{\infty }\left( 0,T;L^{2}\left( \Gamma
\right) \right) ,$ which is enough to deduce%
\begin{equation}
u\in L^{\infty }\left( 0,T;H^{2}\left( \Omega \right) \right) ,\text{
\textrm{tr}}_{\mathrm{D}}\left( u\right) \in L^{\infty }\left(
0,T;H^{2}\left( \Gamma \right) \right) .  \label{qest13tris}
\end{equation}%
The final bound in (\ref{add-reg}) is an immediate consequence of (\ref%
{stronger-Phi}). The proof is finished.
\end{proof}

\begin{remark}
In Theorem \ref{t:strong-solutions}, since the initial datum $\left(
U_{0},\Phi _{0}\right) $\ belongs to $\mathbb{V}^{2}\times \left( \mathcal{M}%
_{\Omega ,\Gamma }^{2}\cap D\left( \mathrm{T}\right) \right) $ it would be
desirable to prove that%
\begin{equation}
U\in L^{\infty }\left( 0,T;\mathbb{V}^{2}\right) ,\text{ }\Phi \in L^{\infty
}\left( 0,T;\mathcal{M}_{\Omega ,\Gamma }^{2}\right) ,  \label{add-reg}
\end{equation}%
as well. Unfortunately, we cannot deduce (\ref{add-reg}) as in the proof of
Theorem \ref{quasi-strong}, because generally, $\mu _{\Omega }\neq \mu
_{\Gamma };$ see Remark \ref{no-cancel}.
\end{remark}


\begin{thebibliography}{99}
\bibitem{CGGM10} C. Cavaterra, C.~G. Gal, M. Grasselli, A. Miranville, \emph{%
Phase-field systems with nonlinear coupling and dynamic boundary conditions}%
, Nonlinear Anal. \textbf{72} (2010), no.~5, 2375--2399.


\bibitem{CFGGGOR09} G.~M. Coclite, A.~Favini, C.~G. Gal, G.~R. Goldstein,
J.~A. Goldstein, E.~Obrecht, S.~Romanelli, \emph{The role of {W}entzell
boundary conditions in linear and nonlinear analysis}, In: S. Sivasundaran.
Advances in Nonlinear Analysis: Theory, Methods and Applications. vol 3,
Cambridge Scientific Publishers Ltd., Cambridge, 2009.

\bibitem{CJ} H. S. Carslaw, J. C. Jaeger, Conduction of heat in solids,
Oxford Science Publications, The Clarendon Press, Oxford University Press,
New York, 1988 (second edition).

\bibitem{CG1967} B.D. Coleman, M. Gurtin, \emph{Equipresence and
constitutive equations for rigid heat conductors}, Z. Angew. Math. Phys. 18
(1967) 199--208.

\bibitem{CM1963} B.D. Coleman, V. Mizel, \emph{Thermodynamics and departures
from Fourier's law of heat conduction}, Arch. Rational Mech. Anal. 13
(1963), 245--261.

\bibitem{Co1964} B.D. Coleman, \emph{Thermodynamics of materials with memory}%
, Arch. Rational Mech. Anal. 17 (1964), 1--46.

\bibitem{CPS05} M. Conti, V. Pata, M. Squassina, \emph{Singular limit of
dissipative hyperbolic equations with memory}, Discrete Contin. Dyn. Syst.
suppl. (2005), 200--208.

\bibitem{CPS06} M. Conti, V. Pata, M. Squassina, \emph{Singular limit of
differential systems with memory}, Indiana Univ. Math. J. \textbf{55}
(2006), no.~1, 169--215.

\bibitem{CPGP} M. D. Chekroun, F. Di Plinio, N. E. Glatt-Holtz, V. Pata,
\emph{Asymptotics of the Coleman-Gurtin model}, Discrete Contin. Dyn. Syst.
Ser. S 4 (2011), 351--369.

\bibitem{FGP10} M. Fabrizio, C. Giorgi, V. Pata, \emph{A new approach to
equations with memory}, Arch. Ration. Mech. Anal. 198 (2010), 189--232.

\bibitem{Gal12-2} C. G. Gal, \emph{On a class of degenerate parabolic
equations with dynamic boundary conditions}, J. Differential Equations
\textbf{253} (2012), 126--166.

\bibitem{Gal12-3} C. G. Gal, \emph{Sharp estimates for the global attractor
of scalar reaction-diffusion equations with a Wentzell boundary condition},
J. Nonlinear Sci. 22 (2012), no. 1, 85--106.

\bibitem{Gal&Grasselli08} C. G. Gal, M. Grasselli, \emph{The non-isothermal {%
A}llen-{C}ahn equation with dynamic boundary conditions}, Discrete Contin.
Dyn. Syst. \textbf{22} (2008), no.~4, 1009--1040.

\bibitem{GGM08} C. G. Gal, M. Grasselli, A. Miranville, \emph{Nonisothermal {%
A}llen-{C}ahn equations with coupled dynamic boundary conditions}, Nonlinear
phenomena with energy dissipation \textbf{29} (2008), 117--139.

\bibitem{Gal&Warma10} C. G. Gal and M. Warma, \emph{Well posedness and the
global attractor of some quasi-linear parabolic equations with nonlinear
dynamic boundary conditions}, Differential Integral Equations \textbf{23}
(2010), no.~3-4, 327--358.

\bibitem{GPM98} C. Giorgi, V. Pata, A. Marzocchi, \emph{Asymptotic behavior
of a semilinear problem in heat conduction with memory}, NoDEA Nonlinear
Differential Equations Appl. \textbf{5} (1998), no.~3, 333--354.

\bibitem{GPM00} C. Giorgi, V. Pata, A. Marzocchi, \emph{Uniform attractors
for a non-autonomous semilinear heat equation with memory}, Quart. Appl.
Math. \textbf{58} (2000), no.~4, 661--683.

\bibitem{Gold06} G.~R. Goldstein, \emph{Derivation and physical
interpretation of general boundary conditions}, Adv. in Diff. Eqns. \textbf{%
11} (2006), 457--480.

\bibitem{Grasselli} M. Grasselli, V. Pata, \emph{Uniform attractors of
nonautonomous dynamical systems with memory}, Progr. Nonlinear Differential
Equations Appl. \textbf{50} (2002), 155--178.

\bibitem{G1965} M. E. Gurtin, \emph{Thermodynamics and the possibility of
spatial interaction in rigid heat conductors}, Arch. Rational Mech. Anal. 18
(1965), 335--342.

\bibitem{HKR1} D. H\"{o}mberg, K. Krumbiegel, J. Rehberg, \emph{Optimal
control of a parabolic equation with dynamic boundary condition}, Appl.
Math. Optim. 67 (2013), 3--31.

\bibitem{MZ} A. Miranville, S. Zelik, \emph{Exponential attractors for the
Cahn Hilliard equation with dynamic boundary conditions}, Math. Models Appl.
Sci. 28 (2005), 709 735.

\bibitem{Pazy83} A. Pazy, \emph{Semigroups of linear operators and
applications to partial differential equations}, Applied Mathematical
Sciences - Volume 44, Springer-Verlag, New York, 1983.

\bibitem{Renardy&Rogers04} M. Renardy, R.~C. Rogers, \emph{An introduction
to partial differential equations}, second ed., Texts in Applied Mathematics
- Volume 13, Springer-Verlag, New York, 2004.

\bibitem{RBT01} A.~Rodr\'{\i}guez-Bernal, A.~Tajdine, \emph{Nonlinear
balance for reaction-diffusion equations under nonlinear boundary
conditions: dissipativity and blow-up}, J. Differential Equations \textbf{169%
} (2001), 332--372.

\bibitem{TT1960} C. Truesdell, R. A. Toupin, The classical field theories,
In : Encyclopedia of Physics, Vol. III/t, edited by S. FL/3GC, E.
Berlin-G6ttingen-Heidelberg: Springer 1960.
\end{thebibliography}
\end{document}